\numberwithin{equation}{section}
\newtheorem{Thm}{Theorem}[section]
\newtheorem{Lem}{Lemma}[section]
\newtheorem{Def}{Definition}[section]
\newtheorem{Rem}{Remark}[section]
\begin{document}
\title{Fujita phenomena in nonlinear fractional Rayleigh-Stokes equations}
%[Blow-up and global existence results for Rayleigh-Stokes equations]
\author{Yiming Jiang}
\address{School of Mathematical Sciences and LPMC\\ Nankai University\\ Tianjin 300071 China}
\email{ymjiangnk@nankai.edu.cn}
\author{Jingchuang Ren}
\address{School of Mathematical Sciences \\ Nankai University\\ Tianjin 300071 China}
\email{1120200024@mail.nankai.edu.cn}
\author{Yawei Wei}
\address{School of Mathematical Sciences and LPMC\\ Nankai University\\ Tianjin 300071 China}
\email{weiyawei@nankai.edu.cn}
\author{Jie Xue}
\address{School of Mathematical Sciences \\ Nankai University\\ Tianjin 300071 China}
\email{1120200032@mail.nankai.edu.cn}
\thanks{MSC: 35A01; 35B40; 35R11.}
\thanks{Acknowledgements: The authors are grateful to the referees for their careful reading and valuable
comments. This work is supported by the NSFC under the grants 12271269 and 12326318, supported by the Fundamental Research Funds for the Central Universities.}
\keywords{Rayleigh-Stokes problem; Critical Fujita exponent; Blow-up; Global existence; Singular nonlinearity}
	
\begin{abstract}
This paper concerns the Cauchy problems for the nonlinear Rayleigh-Stokes equation and the corresponding system with time-fractional derivative of order $\alpha\in(0,1)$, which can be used to simulate the anomalous diffusion in viscoelastic fluids. It is shown that there exists the critical Fujita exponent which separates systematic blow-up of the solutions from possible global existence, and the critical exponent is independent of the parameter $\alpha$. Different from the general scaling argument for parabolic problems, the main ingredients of our proof are suitable decay estimates of the solution operator and the construction of the test function.
\end{abstract}
\maketitle
\section{Introduction}
In this paper, we are first concerned with the global existence and blow-up of solutions to the following fractional Rayleigh-Stokes equation
\begin{equation}\label{(2)}
\begin{cases}
\partial_{t}u(x,t)-(1+kD_{0^+}^{\alpha})\Delta u(x,t)=|x|^\sigma t^\gamma u^{\rho}(x,t), & x\in \mathbb{R}^{N}, \ t>0,\\
u(x,0)=u_{0}(x)\geqslant0,& x\in \mathbb{R}^{N},
\end{cases}
\end{equation}
where $k>0$, $\rho>1$, $\sigma, \gamma\leqslant0$ with $\sigma+2(\gamma+1)>0$, and $D_{0^+}^{\alpha}$ corresponds to the left time-fractional derivative of order $\alpha\in(0,1)$ in the sense of Riemann-Liouville. %In the case $k=\alpha=0$, the first equation in \eqref{(2)} reduces to the usual heat equations which is well documented.

The next part of this paper is devoted to the Cauchy problem of coupled Rayleigh-Stokes system
\begin{equation}\label{(6)}
\begin{cases}
\partial_{t}u(x,t)-(1+kD_{0^+}^{\alpha})\Delta u(x,t)=|x|^\sigma t^\gamma v^{{\rho_1}}(x,t), & x\in \mathbb{R}^{N}, \ t>0,\\
\partial_{t}v(x,t)-(1+kD_{0^+}^{\alpha})\Delta v(x,t)=|x|^\sigma t^\gamma u^{{\rho_2}}(x,t), & x\in \mathbb{R}^{N}, \ t>0,\\
u(x,0)=u_{0}(x)\geqslant0,\ v(x,0)=v_{0}(x)\geqslant0, & x\in \mathbb{R}^{N},
\end{cases}
\end{equation}
where ${\rho_1},{\rho_2}\geqslant1$ with ${\rho_1}{\rho_2}>1$.

Recently, non-Newtonian fluids have attached attention of researchers for their wide application in %due to the demands of various fileds, such as
chemical, biorheology and petroleum industries, see \cite{25}.
Different from Newtonian fluids, properties of viscoelastic fluids are difficult to be all exhibited in a single model. Hence many models of constitutive equations have been proposed, such as the second-grade fluids, the Maxwell fluids and the Oldroyd-B fluids.
The Rayleigh-Stokes problem plays an important role in describing the behavior of some non-Newtonian fluids.
This unsteady flow problem examines the motion of the fluid produced by an infinitely extended edge, which suddenly starts moving parallel to itself from rest.
Fetecau et al. \cite{10} found the exact solutions of the Rayleigh-Stokes problem for a heated second grade fluid.
Further, Nadeem et al. \cite{4} extended the proposed problem to a rectangular pipe and obtained the similar results.

Fractional calculus has been observed to be more suitable to study phenomena with memory effects, anomalous diffusion, problems in rheology,
material science and several other areas, we refer to the recent papers \cite{ML2016,DK2019,KR2024}. Fractional calculus is used in the constitutive relationship of the fluid model. The mathematical model \eqref{(2)} can be derived by combining physical conservation laws with the constitutive equation for a generalized second grade fluid, see \cite{JR2024,24} for derivation details. Fractional derivatives have been found to be more flexible in the description of viscoelasticity. Some results on the Rayleigh-Stokes problem with Riemann-Liouville fractional derivative are obtained by researchers. Shen et al. \cite{24} gained the exact solution of the Rayleigh-Stokes problem for a heated generalized second grade fluid by using double Fourier sine transform and fractional Laplace transform. Using an operator approach, Bazhlekova et al. \cite{20} obtained the existence and Sobolev regularity results of the homogeneous Rayleigh-Stokes problem. Zhou et al. \cite{9,27} studied the local well-posedness and blow-up alternative of mild solutions for the nonlinear Rayleigh-Stokes problem. Further, Wang et al. \cite{Wang} analyzed the existence, uniqueness, and regularity of weak solutions for the linear problem. Here we are much interested in the large time behavior of solutions and determine the critical Fujita exponent for the Cauchy problem \eqref{(2)}.

Critical exponent is an important topic for nonlinear parabolic equations, which was first studied by Fujita \cite{F1966}. Fujita considered the semilinear equation
\begin{equation}\label{15}
\begin{cases}
\partial_{t}u(x,t)-\Delta u(x,t)=u^{\rho}(x,t), & x\in \mathbb{R}^{N}, \ t>0,\\
u(x,0)=u_{0}(x)\geqslant0,& x\in \mathbb{R}^{N},
\end{cases}
\end{equation}
where $\rho>1$. He proved that %Let $p_c=$ be the critical Fujita exponent.

\noindent{\rm{(i)}} if $1<\rho<1+\frac{2}{N}$, then every nontrivial solution blows up in finite time;

\noindent{\rm{(ii)}} if $\rho>1+\frac{2}{N}$, then for sufficiently small initial data, there exists a global solution.

\noindent The critical case $\rho=\rho_c=1+\frac{2}{N}$ belongs to the blow-up case, which was showed by  Hayakawa \cite{H1973} for dimensions $N = 1,2$, and by Kobayashi et al. \cite{KS1977} for general $N$. Then Weissler \cite{W1981} simplified the proof of the critical case.

Later, Bandle and Levine \cite{B} extended the term $u^{\rho}(x,t)$ to $|x|^\sigma u^{\rho}(x,t)$, in order to investigate the behavior of solution for an initial boundary value problem near an isolated singularity. In this case, the critical exponent $\rho_c=1+\frac{2+\sigma}{N}$.

Then Qi \cite{Qi} focused on the following general nonlinear problem %with singular in time sources
\begin{equation}\label{16}
\begin{cases}
\partial_{t}u(x,t)-\Delta u^m(x,t)=|x|^\sigma t^\gamma u^{\rho}(x,t), & x\in \mathbb{R}^{N}, \ t>0,\\
u(x,0)=u_{0}(x)\geqslant0,& x\in \mathbb{R}^{N}.
\end{cases}
\end{equation}
He showed that the critical exponent $\rho_c=m+\gamma(m-1)+\frac{2(\gamma+1)+\sigma}{N}>1$. Further, Guedda and Kirane \cite{19} studied the spatio-temporal fractional equation
\begin{equation*}
\begin{cases}
D^\alpha_{0^+}u(x,t)+(-\Delta)^{\frac{\beta}{2}} u(x,t)=h(x,t)u^{\rho}(x,t), & x\in \mathbb{R}^{N}, \ t>0,\\
u(x,0)=u_{0}(x)\geqslant0,& x\in \mathbb{R}^{N},
\end{cases}
\end{equation*}
where $h(x,t)\sim C_h|x|^\sigma t^\gamma$, $\beta\in[1,2]$. The related critical exponent was obtained as $\rho_c=1+\frac{\alpha(\beta+\sigma)+\beta\gamma}{\alpha N+\beta(1-\alpha)}$.
For the Cauchy problem \eqref{(2)}, we will prove that the critical Fujita exponent is just $\rho_c=1+\frac{\sigma+2(\gamma+1)}{N}$, and that the critical case $\rho=\rho_c$ belongs to the blow-up case.

The evolution system can model a variety of important physical processes, such as seepage of homogeneous fluids through a fissured rock, discrepancy between the conductive and thermodynamic temperatures, etc. Some results on the evolution system are obtained by researchers. For the coupled Rayleigh-Stokes system \eqref{(6)} with $k=\sigma=\gamma=0$, Escobedo and Herrero \cite{EH1991} studied the heat system
\begin{equation*}
\begin{cases}
\partial_{t}u(x,t)-\Delta u(x,t)=v^{{\rho_1}}(x,t), & x\in \mathbb{R}^{N}, \ t>0,\\
\partial_{t}v(x,t)-\Delta v(x,t)=u^{{\rho_2}}(x,t), & x\in \mathbb{R}^{N}, \ t>0,\\
(u(x,0),v(x,0))=(u_{0}(x),v_{0}(x)), & x\in \mathbb{R}^{N},
\end{cases}
\end{equation*}
and determined the critical Fujita curve as
\begin{equation}\label{0}
({\rho_1}{\rho_2})_c=1+\frac{2}{N}\max\{{\rho_1}+1,{\rho_2}+1\}.
\end{equation}
For the system \eqref{(6)} with $\alpha=1$, $\sigma=\gamma=0$, Yang et al. \cite{YC2014}
showed that the critical exponent is the same as \eqref{0}.

To the best of our knowledge, there is few literature concerning the global existence and blow-up results for the Cauchy problem of nonlinear fractional Rayleigh-Stokes equations. Thus the above discussion motivates us to study the problem \eqref{(2)} and system \eqref{(6)}. %Then we obtain the critical exponent and sufficient conditions for the nonexistence of global mild solution to nonlocal problem \eqref{(2)} and \eqref{(6)} respectively as follows.
Here below we state the main results of this paper.

For the single equation \eqref{(2)}, we show the global and local existence of mild solutions, the blow-up result of weak solutions. The definitions of the mild solution and weak solution are given in Definition \ref{def1} and Definition \ref{def3} below. Let
\begin{equation}\label{rrc}
r_c=\frac{N(\rho-1)}{\sigma+2(\gamma+1)}.
\end{equation}
Note that $\rho>1+\frac{\sigma+2(\gamma+1)}{N}$ if and only if $r_c>1$, since $\sigma+2(\gamma+1)>0$. Furthermore, let $r$ satisfy
\begin{equation}\label{rc}
\begin{cases}
r\geqslant r_{c}, & \text{if}\ r_{c}>1,\\
r>1, &  \text{if}\ r_{c}\leqslant1.
\end{cases}
\end{equation}
Before stating our main results, we introduce the following definition of admissible triplets.
\begin{Def}
The triplet $(q,p,r)$ is called admissible if
\begin{align}\label{qq}
\frac{1}{q}=\frac{N}{2}\left(\frac{1}{r}-\frac{1}{p}\right),
\end{align}
where
\begin{align}\label{r1}
1<r<p<\frac{Nr}{(N-2r)_{+}} \ \text{and} \ (N-2r)_{+}=\max\{N-2r, 0\}.
\end{align}
\end{Def}
It is noted that $p<\frac{Nr}{(N-2r)_{+}}$ is derived by $\frac{N}{2}(\frac{1}{r}-\frac{1}{p})<1$ given in Lemma \ref{lemma.4}. It is easy to see that if $(q,p,r)$ is an admissible triplet then $q$ is uniquely determined by $p$ and $r$, so we write $q=q(p,r)$.
%\begin{equation}\label{t}
%\langle t \rangle=t+kt^{1-\alpha},
%\end{equation}

For $1<r<\infty$, donate by $L^{r}(\mathbb{R}^{N})$ the usual Lebesgue space on $\mathbb{R}^{N}$ with the norm $\|\cdot\|_{L^{r}}$, and we shall omit $\mathbb{R}^{N}$ from norms and spaces.
Let $X$ be a Banach space and $I$ be an interval of $\mathbb{R}$, we donate by $C(I;X)$ the space of strongly continuous functions from $I$ to $X$, and by $C_{b}(I;X)$ the space of bounded continuous functions from $I$ to $X$. For $I=[0,\infty)$ or $I=[0,T)$,
we introduce a suitable space $\mathcal{C}_{q(p,r)}(I;L^{p})$ consisting of the functions $u(x,t)$ such that
\begin{align*}
\langle t \rangle^{\frac{1}{q}}u\in C(I;L^{p}), \ \|u\|_{\mathcal{C}_{q(p,r)}(I;L^{p})}:=\sup_{t\in I}\langle t \rangle^{\frac{1}{q}}\|u(\cdot,t)\|_{L^{p}} \ \text{and} \ \lim_{t\to0}\langle t \rangle^{\frac{1}{q}}\|u(\cdot,t)\|_{L^{p}}=0,
\end{align*}
where
\begin{align}\label{(23)}
\langle t \rangle=t+kt^{1-\alpha}.
\end{align}

For the global and local existence, we first prove the results in the case $\max\{r,\rho\}<p<\rho r$. Then we extend the well-posedness results for any admissible triple $(q,p,r)$.

\begin{Thm}\label{th.4}%{\bf{(Global existence)}}
Let $I=[0,\infty)$ and $\rho>1+\frac{\sigma+2(\gamma+1)}{N}$. For $r=r_{c}$, if $u_{0}\in L^{r}$ is sufficiently small, then the problem \eqref{(2)} admits a unique mild solution $u\in C_{b}(I;L^{r})\cap\mathcal{C}_{q(p,r)}(I;L^{p})$ for any admissible triplet $(q,p,r)$. Moreover, the solution $u$ is nonnegative and depends continuously on the initial data.
\end{Thm}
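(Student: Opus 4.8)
The plan is to construct the solution by a contraction-mapping argument for the Duhamel formulation associated with the solution operator of the linear Rayleigh-Stokes problem. Writing $\mathcal{S}(t)$ for this operator, a mild solution of \eqref{(2)} is a fixed point of
\[
\Phi(u)(t) = \mathcal{S}(t)u_0 + \int_0^t \mathcal{S}(t-s)\bigl[|\cdot|^\sigma s^\gamma u^\rho(\cdot,s)\bigr]\,ds,
\]
whose precise meaning is fixed by Definition \ref{def1}. I would first carry out the argument on the closed ball
\[
B_M = \bigl\{u \in C_b(I;L^r)\cap\mathcal{C}_{q(p,r)}(I;L^p) : \|u\|_{C_b(I;L^r)} + \|u\|_{\mathcal{C}_{q(p,r)}(I;L^p)} \le M\bigr\}
\]
in the restricted range $\max\{r,\rho\}<p<\rho r$, choosing $M$ small and leaning on the smallness of $\|u_0\|_{L^r}$. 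The intersection norm is essential: the $L^r$-component controls the initial data, while the time-weighted $L^p$-component carries the decay that makes the nonlinear integral summable.

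The key ingredients are the decay estimates of $\mathcal{S}(t)$ established above. For the linear part one uses $\|\mathcal{S}(t)u_0\|_{L^p}\lesssim\langle t\rangle^{-1/q}\|u_0\|_{L^r}$, which places $\mathcal{S}(\cdot)u_0$ in $\mathcal{C}_{q(p,r)}(I;L^p)$; the vanishing requirement $\langle t\rangle^{1/q}\|\mathcal{S}(t)u_0\|_{L^p}\to0$ as $t\to0$ follows from the density of smooth compactly supported functions in $L^r$ together with $r<p$. For the nonlinear part, since $p>\rho$ one has $\|u^\rho\|_{L^{p/\rho}}=\|u\|_{L^p}^\rho$, and one combines this with the weighted mapping properties of $\mathcal{S}(t-s)$ to absorb the singular factor $|x|^\sigma$. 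Inserting the time-weighted bound $\|u(s)\|_{L^p}\le\langle s\rangle^{-1/q}\|u\|_{\mathcal{C}_{q(p,r)}}$ and the temporal weight $s^\gamma$, the whole Duhamel term reduces to a single Beta-type integral in $s$; here the admissibility relation $\frac{1}{q}=\frac{N}{2}(\frac{1}{r}-\frac{1}{p})$ together with the critical choice $r=r_c=\frac{N(\rho-1)}{\sigma+2(\gamma+1)}$ forces the $s$-exponents to sum to exactly the value that reproduces the decay $\langle t\rangle^{-1/q}$, with each individual exponent staying strictly below $1$ so that the integral converges. This yields a bound of the form $\|\Phi(u)\|\lesssim\|u_0\|_{L^r}+M^\rho$, so $\Phi$ maps $B_M$ into itself once $M$ is small.

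The contraction estimate proceeds identically, using $|u^\rho-v^\rho|\lesssim(|u|^{\rho-1}+|v|^{\rho-1})|u-v|$ and H\"older's inequality to obtain $\|\Phi(u)-\Phi(v)\|\le CM^{\rho-1}\|u-v\|$, a contraction for $M$ small; Banach's fixed point theorem then furnishes the unique mild solution in $B_M$. Nonnegativity follows by running the Picard iterates $u_{n+1}=\Phi(u_n)$ from the nonnegative datum: since $\mathcal{S}(t)$ is positivity preserving and $|x|^\sigma s^\gamma u_n^\rho\ge0$, every iterate, and hence the limit, is nonnegative. Continuous dependence is read off from the same Lipschitz bound applied to two data, giving $\|u-\tilde u\|\lesssim\|u_0-\tilde u_0\|_{L^r}$. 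Finally, to pass from the restricted range to an arbitrary admissible triplet $(q,p,r)$, I would feed the solution just constructed back into the Duhamel formula and re-run the decay estimates for the remaining exponents (or interpolate), showing $u\in\mathcal{C}_{q(p,r)}(I;L^p)$ for every admissible $p$.

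I expect the main obstacle to be the nonlinear estimate in the critical case $r=r_c$: the time integral sits exactly at the scaling boundary, so one must check that the singular spatial weight $|x|^\sigma$ and the temporal weight $t^\gamma$ conspire, through the hypothesis $\sigma+2(\gamma+1)>0$, to keep each exponent of the Beta integral strictly below $1$ while their sum regenerates the self-similar decay. The accompanying subtlety is controlling the behavior near $t=0$ so as to secure the limit condition $\lim_{t\to0}\langle t\rangle^{1/q}\|\cdot\|_{L^p}=0$ built into $\mathcal{C}_{q(p,r)}$, which is exactly where the smallness of the data and the strict inequality $r<p$ enter.
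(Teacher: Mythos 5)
Your proposal is correct and follows essentially the same route as the paper: a contraction-mapping argument in the intersection space $C_{b}(I;L^{r})\cap\mathcal{C}_{q(p,r)}(I;L^{p})$ for $\max\{r,\rho\}<p<\rho r$, using the weighted decay estimate of the solution operator, the Beta-integral identity forced by the critical choice $r=r_{c}$, Picard iteration for nonnegativity, the Lipschitz bound for continuous dependence, and a final bootstrapping/interpolation step to reach all admissible triplets. The only cosmetic difference is that the paper verifies the vanishing condition at $t=0$ via an explicit truncation function $\mathcal{Y}_{n}$ rather than your appeal to density, which amounts to the same argument.
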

\begin{Thm}\label{thm101}%{\bf{(Local existence)}}
For $r>r_{c}$ and $u_{0}\in L^{r}$, then there exists a maximal time $T^*>0 $ such that the problem \eqref{(2)} has a unique mild solution $u\in C([0,T^{*});L^{r})\cap\mathcal{C}_{q(p,r)}([0,T^{*});L^{p})$ for any admissible triplet $(q,p,r)$. Moreover, $T^{*}$ depends on $\|u_{0}\|_{L^{r}}$.
\end{Thm}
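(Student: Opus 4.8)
The plan is to build the local mild solution by a contraction-mapping argument applied to the Duhamel formulation
\[
\Phi(u)(t)=S(t)u_{0}+\int_{0}^{t}S(t-s)\bigl(|x|^{\sigma}s^{\gamma}u^{\rho}(s)\bigr)\,ds,
\]
where $S(t)$ is the solution operator of the linear Rayleigh-Stokes problem, whose $L^{a}$-$L^{b}$ decay estimates (carrying the time weight $\langle t\rangle$ of \eqref{(23)}) are the tools advertised in the abstract. Following the text, I would first work in the restricted range $\max\{r,\rho\}<p<\rho r$, where $u\mapsto u^{\rho}$ maps $L^{p}$ boundedly into $L^{p/\rho}$. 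Fixing the linear-estimate constant $C_{0}$, set $M=2C_{0}\|u_{0}\|_{L^{r}}$ and work in the closed ball
\[
X_{T}=\Bigl\{u:\ \|u\|_{C([0,T];L^{r})}+\|u\|_{\mathcal{C}_{q(p,r)}([0,T];L^{p})}\le M\Bigr\},
\]
with $T$ to be chosen small.

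The heart of the matter is the Duhamel estimate. Applying the decay estimates of $S(t-s)$, then controlling the weight $|x|^{\sigma}$ by a weighted (e.g.\ Hardy-Littlewood-Sobolev or Stein-Weiss) inequality and integrating the resulting kernel against $s^{\gamma}$, one bounds both $\|\Phi(u)(t)\|_{L^{r}}$ and $\langle t\rangle^{1/q}\|\Phi(u)(t)\|_{L^{p}}$ by the linear contribution $C_{0}\|u_{0}\|_{L^{r}}$ plus a nonlinear contribution of the form $C\,T^{\theta}M^{\rho}$. The exponent $\theta$ is precisely where the hypothesis $r>r_{c}$ enters: in the critical case $r=r_{c}=\frac{N(\rho-1)}{\sigma+2(\gamma+1)}$ the scaling of the nonlinear integral is exactly balanced, giving $\theta=0$, whereas the strict inequality $r>r_{c}$ tips the balance to $\theta>0$, so the nonlinear term carries a positive power of $T$. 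The parallel Lipschitz bound
\[
\|\Phi(u)-\Phi(w)\|_{X_{T}}\le C\,T^{\theta}\bigl(\|u\|_{X_{T}}^{\rho-1}+\|w\|_{X_{T}}^{\rho-1}\bigr)\|u-w\|_{X_{T}}
\]
follows from the pointwise inequality $|u^{\rho}-w^{\rho}|\le\rho\bigl(|u|^{\rho-1}+|w|^{\rho-1}\bigr)|u-w|$ together with Hölder's inequality.

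Since $\theta>0$, one may choose $T=T(\|u_{0}\|_{L^{r}})$ so small that $C\,T^{\theta}M^{\rho-1}<\tfrac12$; then $\Phi$ is a self-map of $X_{T}$ and a strict contraction, and the Banach fixed point theorem produces a unique mild solution on $[0,T]$, with $T$ depending only on $\|u_{0}\|_{L^{r}}$ as claimed. To upgrade to an arbitrary admissible triplet and to obtain the continuity $u\in C([0,T^{*});L^{r})$, I would bootstrap using the smoothing estimates of $S(t)$ and the consistency of the fixed point under varying $(q,p,r)$, exactly as the text announces it will extend the well-posedness. The maximal existence time $T^{*}$ is then defined as the supremum of times for which the iteration can be repeated, yielding the standard continuation with a blow-up alternative.

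I expect the main obstacle to be the simultaneous singularity of the source term in space and time. The spatial factor $|x|^{\sigma}$ with $\sigma\le0$ lies in no global $L^{d}(\mathbb{R}^{N})$, so a bare Hölder inequality fails and a weighted estimate must be used whose exponents are compatible both with the admissibility relation \eqref{qq} and with the range $\frac{N}{2}(\frac1r-\frac1p)<1$ of Lemma \ref{lemma.4}; meanwhile the factor $s^{\gamma}$ with $\gamma\le0$ makes the time integral improper at $s=0$, so its integrability must be reconciled with the singularity of the decay kernel of $S(t-s)$ as $s\to t$. Checking that these constraints, in conjunction with the standing assumption $\sigma+2(\gamma+1)>0$, still leave a strictly positive power $\theta$ of $T$ is the delicate bookkeeping on which the whole argument turns.
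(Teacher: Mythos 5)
Your proposal is correct and follows essentially the same route as the paper: a contraction argument in the ball of radius $2C\|u_{0}\|_{L^{r}}$ of the weighted space, first for $\max\{r,\rho\}<p<\rho r$ with the weight $|x|^{\sigma}$ absorbed into the solution-operator estimate (the paper's Lemma \ref{lemma.4}, built on the weighted heat-semigroup smoothing and subordination), the nonlinear term carrying the factor $\langle T\rangle^{-\frac{N(\rho-1)}{2r}+\frac{\sigma}{2}+\gamma+1}$ whose exponent is positive exactly when $r>r_{c}$, then extension to all admissible triplets and a continuation/blow-up alternative to define $T^{*}$. The only cosmetic difference is that the paper phrases the smallness in terms of $\langle T\rangle=T+kT^{1-\alpha}$ rather than $T^{\theta}$, which changes nothing since $\langle T\rangle\to0$ as $T\to0$.
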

Next, we show the finite time blow-up of weak solutions for the problem \eqref{(2)} by the test function method.
\begin{Thm}\label{th.1}%{\bf{(Blow-up solution)}}
If $1<\rho\leqslant 1+\frac{\sigma+2(\gamma+1)}{N}$ and $u_{0}(x)\geqslant0$, then the problem {\rm\eqref{(2)}} admits no nontrivial global weak solution.
\end{Thm}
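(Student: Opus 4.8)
The plan is to argue by contradiction via the rescaled test-function method of Mitidieri--Pohozaev type, adapted to the fractional operator $D_{0^+}^\alpha$ and the space--time weight $|x|^\sigma t^\gamma$. Suppose $u\geqslant 0$ is a nontrivial global weak solution. Inserting into the weak formulation of Definition \ref{def3} a nonnegative test function $\varphi$ that vanishes for large $|x|$ and for $t$ near the terminal time $T$, and integrating by parts, the only nonstandard step is the fractional term: the left Riemann--Liouville derivative $D_{0^+}^\alpha$ is transferred onto $\varphi$ as the right-sided derivative $D_{T^-}^\alpha$, and since $D_{T^-}^\alpha$ acts in $t$ while $\Delta$ acts in $x$, the two commute. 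Using $u_0\geqslant 0$ to discard the nonnegative initial contribution $\int_{\R^N}u_0\varphi(\cdot,0)$, I obtain
\[
\int_0^T\!\!\int_{\R^N}|x|^\sigma t^\gamma u^\rho\varphi \;\leqslant\; \int_0^T\!\!\int_{\R^N}u\,|\partial_t\varphi| + \int_0^T\!\!\int_{\R^N}u\,|\Delta\varphi| + k\int_0^T\!\!\int_{\R^N}u\,|D_{T^-}^\alpha\Delta\varphi|.
\]

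Next I would take $\varphi(x,t)=\phi(x/R)^\ell\,\theta(t/R^2)^\ell$ with $\phi,\theta$ standard cutoffs and $\ell$ large, the \emph{parabolic} scaling $t\sim R^2$ being dictated by the balance between $\partial_t$ and $\Delta$; the product form is chosen so that $D_{T^-}^\alpha$ acts only on the time factor. Applying Young's inequality $ab\leqslant\varepsilon a^\rho+C_\varepsilon b^{\rho'}$ with $a=(|x|^\sigma t^\gamma\varphi)^{1/\rho}u$ to each term absorbs a fraction $\varepsilon\int|x|^\sigma t^\gamma u^\rho\varphi$ into the left-hand side and leaves weighted integrals of the form $\int(|x|^\sigma t^\gamma)^{-1/(\rho-1)}|\partial_t\varphi|^{\rho'}\varphi^{-1/(\rho-1)}$, finite precisely when $\ell\geqslant\rho'$ so the powers of $\varphi$ cancel; moreover these are supported away from $x=0$ and $t=0$, so the negative powers of the weight cause no trouble. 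The change of variables $x=Ry,\ t=R^2\tau$ then shows that the $\partial_t\varphi$ and $\Delta\varphi$ contributions scale like $R^{E}$ with
\[
E \;=\; N-\frac{\sigma+2(\gamma+1)}{\rho-1},
\]
while, because $D_{T^-}^\alpha$ contributes an extra factor $R^{-2\alpha}$, the fractional term scales like $R^{E-2\alpha\rho'}$, which is \emph{strictly} of lower order. This is exactly why the critical exponent is independent of $\alpha$. A direct computation gives $E\leqslant 0$ iff $\rho\leqslant 1+\frac{\sigma+2(\gamma+1)}{N}$, i.e. iff $r_c\leqslant1$.

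In the strictly subcritical range $E<0$, letting $R\to\infty$ forces $\int_0^\infty\!\int_{\R^N}|x|^\sigma t^\gamma u^\rho=0$, hence $u\equiv0$, contradicting nontriviality. The critical case $\rho=\rho_c$ (where $E=0$) is the delicate part and needs the customary refinement: the uniform bound first yields $\int_0^\infty\!\int_{\R^N}|x|^\sigma t^\gamma u^\rho<\infty$; then, noting that $\partial_t\varphi$, $\Delta\varphi$ and $D_{T^-}^\alpha\Delta\varphi$ are all supported in the transition region $R\lesssim|x|\lesssim 2R$, $R^2\lesssim t\lesssim 2R^2$, I re-estimate the right-hand side by H\"older (rather than Young) over this receding set. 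The weight factor stays $O(1)$ since $E=0$, whereas $\big(\int_{\Omega_R}|x|^\sigma t^\gamma u^\rho\varphi\big)^{1/\rho}\to0$ by absolute continuity of the convergent integral, so the right-hand side tends to $0$ and again $u\equiv0$.

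I expect the main obstacle to be the rigorous treatment of the fractional term: one must verify the fractional integration-by-parts identity for the chosen $\varphi$ so that no boundary contribution survives the singular kernel of $D_{T^-}^\alpha$ at $t=0$, and confirm the claimed $R^{-2\alpha}$ gain through an explicit bound on $D_{T^-}^\alpha$ applied to a truncated power of $\theta$. The critical-case refinement is the second, somewhat more delicate, point.
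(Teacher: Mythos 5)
Your proposal is correct and follows essentially the same route as the paper's proof: a test-function argument with a product cutoff in space and time, parabolic scaling $T=R^{2}$, the explicit $R^{-2\alpha}$ gain on the fractional term (this is exactly the content of the paper's Lemma \ref{lemma.8} and Lemmas \ref{lemma.9}--\ref{lemma.10}, i.e.\ the bounds you flag as needing verification, and they do hold for the truncated power cutoff), the resulting estimate $I_{\rho}\leqslant CR^{E}$ with $E=N-\frac{\sigma+2(\gamma+1)}{\rho-1}$, blow-up for $E<0$ by letting $R\to\infty$, and the critical case $E=0$ handled precisely as you describe, by uniform boundedness of the integral plus a H\"{o}lder re-estimate over receding sets and absolute continuity. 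The only deviations are cosmetic—Young's $\varepsilon$-absorption instead of the paper's H\"{o}lder-plus-$I_{\rho}^{1/\rho}$ absorption, a large power of a standard cutoff instead of the paper's eigenfunction-based cutoff—together with one harmless imprecision: $\Delta\varphi$ and $kD_{T^-}^{\alpha}\Delta\varphi$ are supported in the spatial annulus but for \emph{all} $t\in[0,T]$ (the right-sided Riemann--Liouville derivative is nonlocal in time, so it does not vanish on $[0,T/2]$ where $\theta\equiv1$), not only in the space-time corner you state, yet each of the three supports still recedes in at least one variable, so your dominated-convergence step goes through unchanged.
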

\begin{Rem}
In view of \eqref{(11)} and \eqref{(16)}, the term $kD_{0^+}^{\alpha}\Delta u$ is not strong enough to affect the range of values of the exponent $\rho$ which ensures nonexistence. However, it seems that this term can delay the blow-up time of solutions and the diffusion effect of this term decreases as $k\to0^+$.
\end{Rem}
In the following, we emphasis that a mild solution of \eqref{(2)} is also a weak solution of it.
\begin{Lem}\label{mw}
Let $T>0$ and $u_0\in L^r$. If $u\in C([0,T);L^{r})\cap\mathcal{C}_{q(p,r)}([0,T);L^{p})$ for any admissible triplet $(q,p,r)$ is a mild solution of the problem \eqref{(2)}, then $u$ is also a weak solution of \eqref{(2)}.
\end{Lem}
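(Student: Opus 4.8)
The plan is to start from the Duhamel (mild) representation of $u$ supplied by Definition~\ref{def1}, insert it into the weak formulation of Definition~\ref{def3}, and verify the resulting identity by transferring every derivative onto the test function through duality. Write the mild solution schematically as
\begin{equation*}
u(t)=S(t)u_{0}+\int_{0}^{t}S(t-s)\bigl[|x|^{\sigma}s^{\gamma}u^{\rho}(s)\bigr]\,ds,
\end{equation*}
where $S(t)$ is the solution operator of the linear Rayleigh--Stokes problem, defined through the Laplace-transform multiplier $\bigl(z+(1+kz^{\alpha})(-\Delta)\bigr)^{-1}$, so that $S(t)$ commutes with $\Delta$ and $S(t)u_{0}$ solves the homogeneous linear equation. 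Fix a test function $\varphi$ that is compactly supported in space and vanishes near $t=T$. The weak formulation pairs $u$ against the backward operator $-\partial_{t}\varphi-(1+kD_{T^{-}}^{\alpha})\Delta\varphi$, where $D_{T^{-}}^{\alpha}$ is the right Riemann--Liouville derivative, arising as the formal adjoint of $D_{0^{+}}^{\alpha}$ through the fractional integration-by-parts identity $\int_{0}^{T}(D_{0^{+}}^{\alpha}f)\,g\,dt=\int_{0}^{T}f\,(D_{T^{-}}^{\alpha}g)\,dt$.

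First I would treat the homogeneous part $S(t)u_{0}$. Pairing it with $\varphi$ and integrating by parts in $t$ (producing the boundary term $\int_{\mathbb{R}^{N}}u_{0}\varphi(x,0)\,dx$), in space (moving $\Delta$ onto $\varphi$ by self-adjointness), and in the fractional variable (moving $D_{0^{+}}^{\alpha}$ onto $\varphi$ as $D_{T^{-}}^{\alpha}$) yields precisely the initial-datum contribution demanded by the weak formulation, since $S(t)u_{0}$ solves the linear homogeneous problem. Next I would treat the Duhamel term by the identical transfer of derivatives after interchanging the order of integration via Fubini's theorem; the $s$-integral of $S(t-s)f(s)$ against the backward operator collapses, leaving only the source contribution $\int_{0}^{T}\int_{\mathbb{R}^{N}}|x|^{\sigma}t^{\gamma}u^{\rho}\varphi\,dx\,dt$. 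Adding the two contributions reproduces the weak identity.

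The main obstacle will be justifying these manipulations under only the regularity the mild solution possesses, namely $u\in C([0,T);L^{r})\cap\mathcal{C}_{q(p,r)}([0,T);L^{p})$, rather than classical smoothness. I would handle this by approximation: regularize the forcing $f=|x|^{\sigma}t^{\gamma}u^{\rho}$ and the datum $u_{0}$, carry out the integration by parts on the smooth images of $S(t)$, and pass to the limit. The delicate integrability checks occur near $t=0$ and $s=0$, where the weight $|x|^{\sigma}t^{\gamma}$ and the profile $\langle t\rangle^{-1/q}$ are singular; here I would invoke the decay estimates for $S(t)$ (the same estimates underlying Theorems~\ref{th.4} and \ref{thm101}) together with the admissibility relation \eqref{qq} and the hypothesis $\sigma+2(\gamma+1)>0$ to establish absolute convergence, thereby licensing both Fubini's theorem and the limiting procedure. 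The fractional integration by parts, transferring the left Riemann--Liouville derivative onto $\varphi$ as the right one, is the single step that departs essentially from the classical heat-equation argument and requires the endpoint behaviour of $\varphi$ to annihilate the boundary contributions.
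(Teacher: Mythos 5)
Your proposal follows the same skeleton as the paper's proof: decompose the mild solution as $S_{\alpha}(t)u_{0}+\mathcal{G}u$, where $\mathcal{G}u(t)=\int_{0}^{t}S_{\alpha}(t-s)|x|^{\sigma}s^{\gamma}u^{\rho}(s)\,ds$, transfer $\partial_{t}$, $\Delta$ and the fractional derivative onto $\varphi$ by duality, and compensate for the low regularity of $u$ by a limiting argument. The homogeneous part is treated essentially as in the paper (Lemma~\ref{P1}(ii) plus fractional integration by parts; your regularization of $u_{0}$ plays the role of the paper's device of differentiating $t\mapsto\int_{\mathbb{R}^N} kI_{0^{+}}^{1-\alpha}(S_{\alpha}(t)u_{0})A\varphi\,dx$, which is what tames the $O(t^{-1})$ singularity of $S_{\alpha}'(t)u_{0}$ at $t=0$). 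Where you genuinely diverge is the Duhamel term: you disintegrate it by Fubini and argue slice by slice, collapsing $\int_{s}^{T}\int S_{\alpha}(t-s)f(s)\,[-\partial_{t}\varphi-(1+kD_{T^{-}}^{\alpha})\Delta\varphi]\,dx\,dt$ to $\int f(s)\varphi(\cdot,s)\,dx$ for each fixed $s$. The paper never disintegrates: it keeps $\mathcal{G}u$ whole, uses Lemma~\ref{P1}(iii) and the convolution identity behind it to express $kI_{0^{+}}^{1-\alpha}\mathcal{G}u$ through $S_{\alpha}$, and replaces the time derivative by difference quotients $\frac{1}{h}(\mathcal{G}u(t+h)-\mathcal{G}u(t))$ handled by dominated convergence. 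Your route is the classical heat-equation argument and is arguably more transparent once complete; the paper's route avoids all slice-by-slice boundary analysis at the price of convolution bookkeeping.

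The one point where your sketch is materially incomplete---and where a naive execution would fail---is the justification of the word ``collapses''. The slice $w_{s}(t)=S_{\alpha}(t-s)f(s)$ does \emph{not} solve the homogeneous equation with the operator $D_{0^{+}}^{\alpha}$ of problem \eqref{(2)}: by time translation it solves $\partial_{t}w_{s}+(1+kD_{s^{+}}^{\alpha})Aw_{s}=0$ on $(s,T)$, with the Riemann--Liouville derivative based at $s$, whereas your test function carries $D_{T^{-}}^{\alpha}$, the adjoint of $D_{0^{+}}^{\alpha}$ on $(0,T)$, and the integration-by-parts identity \eqref{(9)} you invoke is stated on $(0,T)$. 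What you actually need is the subinterval duality $\int_{s}^{T}(D_{s^{+}}^{\alpha}g)\varphi\,dt=\int_{s}^{T}g\,(D_{T^{-}}^{\alpha}\varphi)\,dt$ for $\varphi(T)=0$. This is true, because the right Riemann--Liouville derivative involves only integration from $t$ to $T$ and is therefore blind to the left base point, so the adjoint of $D_{s^{+}}^{\alpha}$ on $(s,T)$ is the restriction of the very same $D_{T^{-}}^{\alpha}$ (the computation is that of Lemma~\ref{lemma.8} shifted to $(s,T)$); but it is a separate fact from \eqref{(9)} and must be stated, since it is exactly where the nonlocality could bite. Secondly, the per-slice integration by parts is not absolutely convergent for $f(s)$ merely in a Lebesgue space: the only available bound is $\|S_{\alpha}'(\tau)v\|_{L^{p}}\leqslant C\tau^{-1}\|v\|_{L^{p}}$, whose right-hand side is not integrable at $\tau=0$. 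Your regularization repairs this provided the approximations of $f(s)$ are taken in $D(A)$, for then $\|S_{\alpha}'(\tau)f_{n}(s)\|_{L^p}\leqslant C(1+k\tau^{-\alpha})\|Af_{n}(s)\|_{L^p}$ is integrable and the boundary term $I_{s^{+}}^{1-\alpha}\bigl(AS_{\alpha}(\cdot-s)f_{n}(s)\bigr)$ vanishes at $s^{+}$; the passage to the limit then follows from the estimates of Lemma~\ref{lemma.4}. With these two points supplied, your proof closes.
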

\begin{Rem}
Here a combination of Theorem \ref{th.4} and Theorem \ref{th.1} ensures that the critical Fujita exponent to the problem \eqref{(2)} is
\begin{equation}\label{pc}
\rho_c:=1+\frac{\sigma+2(\gamma+1)}{N}.
\end{equation}
To be specific, for a nonnegative and nonzero $u_0\in L^r$, there holds:\\
{\rm{(i)}} if $1<\rho\leqslant\rho_c$, then for any $r\geqslant1$, there exists no positive global mild solution of the problem \eqref{(2)};\\
{\rm{(ii)}} if $\rho>\rho_c$, then there exists a $r>1$ such that the problem \eqref{(2)} has a positive global mild solution.
\end{Rem}
Recall the coupled Rayleigh-Stokes system \eqref{(6)}
\begin{equation*}
\begin{cases}
\partial_{t}u(x,t)-(1+kD_{0^+}^{\alpha})\Delta u(x,t)=|x|^\sigma t^\gamma v^{{\rho_1}}(x,t), & x\in \mathbb{R}^{N}, \ t>0,\\
\partial_{t}v(x,t)-(1+kD_{0^+}^{\alpha})\Delta v(x,t)=|x|^\sigma t^\gamma u^{{\rho_2}}(x,t), & x\in \mathbb{R}^{N}, \ t>0,\\
(u(x,0),v(x,0))=(u_{0}(x),v_{0}(x)), & x\in \mathbb{R}^{N}.
\end{cases}
\end{equation*}
Let \begin{equation}\label{ppc}
({\rho_1}{\rho_2})_c=1+\frac{\sigma+2(\gamma+1)}{N}\max\{{\rho_1}+1,{\rho_2}+1\},
\end{equation}
and
\begin{align}\label{rcc}
(r_i)_{c}=\frac{N(\rho_1\rho_2-1)}{(\rho_i+1)(\sigma+2\gamma+2)},\ i=1,2.
\end{align}
Similarly, ${\rho_1}{\rho_2}>({\rho_1}{\rho_2})_c$ if and only if $(r_1)_c,(r_2)_c>1$.
Without loss of generality, we may suppose that $\rho_{1}\geqslant\rho_{2}$. For $\theta>1$, let $r_i=(r_i)_{c}$, $p_i=\theta r_i$ and
\begin{align}\label{r21}
\frac{1}{q_i}=\frac{N}{2}\left(\frac{1}{r_i}-\frac{1}{p_i}\right),\ i=1,2.
\end{align}
By \eqref{rcc}, it is seen that $r_1\leqslant r_2$. From this and $\frac{N}{2}(\frac{1}{r_i}-\frac{1}{p_i})<1, i=1,2$, we get
\begin{align}\label{r22}
1<\theta<\frac{N}{(N-2r_1)_{+}}=\frac{(\rho_1+1)(\sigma+2\gamma+2)}{\big((\rho_1+1)(\sigma+2\gamma+2)-2(\rho_1\rho_2-1)\big)_{+}}.
\end{align}
From $\max\{\rho_2,r_1\}<p_1<\rho_2 r_2$ and $\max\{\rho_1,r_2\}<p_2<\rho_1 r_1$, we have
%\begin{align*}
%\max\left\{\frac{\rho_2}{r_1}, \frac{\rho_1}{r_2}, 1\right\}<\frac{p_1}{r_1}=\frac{p_2}{r_2}<\min\left\{\frac{\rho_2 r_2}{r_1}, \frac{\rho_1 r_1}{r_2}\right\},
%\end{align*}
%which yields
\begin{align}\label{r23}
\max\left\{1,\frac{\rho_1(\rho_2+1)(\sigma+2\gamma+2)}{N(\rho_1\rho_2-1)}\right\}<\theta<\frac{(\rho_2+1)\rho_1}{\rho_1+1}.
\end{align}
Note that by $\rho_1\rho_2>1$, we know
\begin{align*}
1<\frac{(\rho_2+1)\rho_1}{\rho_1+1}<\frac{(\rho_1+1)(\sigma+2\gamma+2)}{\big((\rho_1+1)(\sigma+2\gamma+2)-2(\rho_1\rho_2-1)\big)_{+}}.
\end{align*}
Hence, if $\theta$ satisfies \eqref{r23}, then it must satisfy \eqref{r22}.
%Then, for $\theta$ satisfies \eqref{r23}, we get $p_1>\rho_2$, $p_2>\rho_1$, $\gamma-\frac{\rho_1}{q_2}>-1$, $\gamma-\frac{\rho_2}{q_1}>-1$ and
%\begin{align*}
%-\frac{N}{2}(\frac{\rho_2}{p_1}-\frac{1}{p_2})+\frac{\sigma}{2}=-\frac{N}{2}(\frac{\rho_2}{p_1}-\frac{1}{p_2})+\frac{\sigma}{2}>-1
%\end{align*}
%=-\frac{2(\gamma+1)+\sigma}{2\theta}+\frac{\sigma}{2}
By the same argument used in the proof of Theorem \ref{th.4}, we first consider the following global existence result in the case $(p_1,p_2)=\theta(r_1,r_2)$ with $\theta$ given in \eqref{r23}. Then we extend the range of values of the parameter $\theta$ to \eqref{r22} and complete the proof.
\begin{Thm}\label{th21} %{\bf{(Global existence)}}
%For $i=1,2$ and $\theta$ given in \eqref{r22}, let $I=[0,\infty)$, $r_i=(r_i)_{c}>1$, and $p_i=\theta r_i$.
Let $I=[0,\infty)$ and ${\rho_1}{\rho_2}>({\rho_1}{\rho_2})_c$. For $r_i=(r_i)_{c}$, $i=1,2,$ if $(u_{0},v_{0})\in L^{r_1}\times L^{r_2}$ is sufficiently small, then there exists a unique mild solution $(u,v)\in \big(C_{b}(I;L^{r_1})\cap\mathcal{C}_{q_1(p_1,r_1)}(I;L^{p_1})\big)\times \big(C_{b}(I;L^{r_2})\cap\mathcal{C}_{q_2(p_2,r_2)}(I;L^{p_2})\big)$ for the system \eqref{(6)} with $(p_1,p_2)=\theta(r_1,r_2)$ satisfying \eqref{r22}. Moreover, the coupling solution $(u,v)$ is nonnegative and depends continuously on the initial data.
\end{Thm}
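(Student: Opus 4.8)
The plan is to realize $(u,v)$ as a fixed point of the coupled Duhamel map and to close a contraction on a small product ball, mirroring the proof of Theorem~\ref{th.4} but tracking two coupled components. Writing $\{S(t)\}_{t\ge0}$ for the solution operator of the linear problem $\partial_t w-(1+kD_{0^+}^\alpha)\Delta w=0$, a mild solution of \eqref{(6)} solves
\begin{equation*}
u(t)=S(t)u_0+\int_0^t S(t-s)\big(|\cdot|^\sigma s^\gamma v^{\rho_1}(s)\big)\,ds,\qquad
v(t)=S(t)v_0+\int_0^t S(t-s)\big(|\cdot|^\sigma s^\gamma u^{\rho_2}(s)\big)\,ds.
\end{equation*}
Setting $E_i=C_b(I;L^{r_i})\cap\mathcal{C}_{q_i(p_i,r_i)}(I;L^{p_i})$ with $r_i=(r_i)_c$, $p_i=\theta r_i$ and $q_i$ from \eqref{r21}, I would define $\Phi=(\Phi_1,\Phi_2)$ by the two right-hand sides above and seek a fixed point in the closed ball $\mathcal{B}=\{(u,v):\|u\|_{E_1}\le M_1,\ \|v\|_{E_2}\le M_2\}$ for radii $M_1,M_2$ determined by $\|u_0\|_{L^{r_1}}$ and $\|v_0\|_{L^{r_2}}$.

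The analytic core is the pair of decay estimates $\|S(t)f\|_{L^p}\le C\langle t\rangle^{-\frac N2(\frac1r-\frac1p)}\|f\|_{L^r}$ supplied by the earlier lemmas, together with the positivity of $S(t)$. For the linear part these immediately give $\langle t\rangle^{1/q_i}\|S(t)u_0\|_{L^{p_i}}\le C\|u_0\|_{L^{r_i}}$ and the analogous $L^{r_i}$ bound. The heart of the matter is the Duhamel term for $\Phi_1$: I would estimate $\||\cdot|^\sigma v^{\rho_1}(s)\|_{L^a}$ by a weighted H\"older inequality (splitting $\{|x|<1\}$ and $\{|x|\ge1\}$ and using $\sigma\le0$ with $|\sigma|<2(\gamma+1)$ to absorb $|x|^\sigma$), controlling $v^{\rho_1}$ through $\|v(s)\|_{L^{p_2}}^{\rho_1}$, then applying the $L^a\to L^{p_1}$ (resp.\ $L^a\to L^{r_1}$) decay estimate and integrating against the temporal weight $s^{\gamma}\langle s\rangle^{-\rho_1/q_2}$. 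The constraints $\max\{\rho_2,r_1\}<p_1<\rho_2 r_2$ and $\max\{\rho_1,r_2\}<p_2<\rho_1 r_1$, which yield \eqref{r23}, are exactly what make the intermediate exponent $a$ admissible and force the time integral $\int_0^t\langle t-s\rangle^{-\frac N2(\frac1a-\frac1{p_1})}s^{\gamma}\langle s\rangle^{-\rho_1/q_2}\,ds$ to converge and to reproduce precisely the weight $\langle t\rangle^{-1/q_1}$; the identity $r_i=(r_i)_c$ is what balances the homogeneity so that no residual power of $t$ survives. The same computation for $\Phi_2$ uses the mirror constraint on $p_1$. This yields $\Phi(\mathcal{B})\subset\mathcal{B}$ once $M_1,M_2$ and the data are small, and a parallel estimate on the differences $v^{\rho_1}-\tilde v^{\rho_1}$, $u^{\rho_2}-\tilde u^{\rho_2}$ (via $|a^\rho-b^\rho|\le C(|a|^{\rho-1}+|b|^{\rho-1})|a-b|$ and H\"older) gives the contraction, hence a unique fixed point by Banach's theorem.

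With the fixed point in hand, nonnegativity follows by iterating $\Phi$ from $(0,0)$: since $u_0,v_0\ge0$ and $S(t)$ preserves positivity, every iterate stays nonnegative and the limit inherits it. Continuous dependence on $(u_0,v_0)$ is read off from the same contraction estimate applied to two sets of data. To upgrade from the restricted range \eqref{r23} to the full admissible range \eqref{r22}, I would feed the solution just constructed back into the Duhamel formula: knowing $(u,v)\in E_1\times E_2$ for one admissible $\theta$, the decay estimates bootstrap the $L^{p_i}$-bounds with the matching weight for every $\theta$ in \eqref{r22}, while the inclusion of \eqref{r23} into \eqref{r22} noted in the text guarantees the starting range is nonempty.

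I expect the main obstacle to be the coupled nonlinear estimate: one must choose the intermediate H\"older exponent $a$ so that simultaneously (a) the spatial weight $|x|^\sigma$ is absorbed, (b) $v^{\rho_1}$ is controlled by the $L^{p_2}$-norm of the \emph{other} component, and (c) the temporal integral converges at both endpoints $s\to0$ (where $s^\gamma$ is singular) and $s\to t$ (where the kernel $\langle t-s\rangle^{-\frac N2(\frac1a-\frac1{p_1})}$ may be singular) while emitting exactly the decay $\langle t\rangle^{-1/q_1}$. Verifying that \eqref{r23} together with $r_i=(r_i)_c$ delivers all three conditions at once, for both components with their mismatched powers $\rho_1\ne\rho_2$, is the delicate bookkeeping that drives the whole argument.
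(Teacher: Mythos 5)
Your overall architecture --- the coupled Duhamel map $\Phi=(\Phi_1,\Phi_2)$, a contraction on a small product ball, positivity via the Picard iteration started at $(0,0)$, continuous dependence from the difference estimate, and a bootstrap to pass from the range \eqref{r23} to the full range \eqref{r22} --- is exactly the paper's. The genuine gap is in your treatment of the spatial weight $|x|^\sigma$. You plan to use the \emph{unweighted} decay estimate $\|S_\alpha(t)f\|_{L^p}\leqslant C\langle t\rangle^{-\frac N2(\frac1r-\frac1p)}\|f\|_{L^r}$ and to absorb $|x|^\sigma$ by H\"older's inequality after splitting $\{|x|<1\}$ and $\{|x|\geqslant1\}$. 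The paper instead works throughout with the weighted smoothing estimate of Lemma \ref{lemma.4}, namely $\|S_\alpha(t)|\cdot|^\sigma f\|_{L^{p}}\leqslant C\langle t\rangle^{-\frac N2(\frac1r-\frac1p)+\frac\sigma2}\|f\|_{L^{r}}$, whose extra factor $\langle t\rangle^{\sigma/2}$ is indispensable: the identity $\frac1{q_1}-\frac N2\big(\frac{\rho_1}{p_2}-\frac1{p_1}\big)+\frac\sigma2+\gamma-\frac{\rho_1}{q_2}+1=0$, which holds precisely because $r_i=(r_i)_c$, is what makes the Duhamel integral emit exactly the weight $\langle t\rangle^{-1/q_1}$ needed for the self-map on $\mathcal{C}_{q_1(p_1,r_1)}(I;L^{p_1})$.

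Your H\"older splitting cannot reproduce this factor when $\sigma<0$. To place $|x|^\sigma$ in $L^c$ of the inner region you need $c<N/|\sigma|$, while on the outer region you need $c>N/|\sigma|$; the scaling-exact exponent $c=N/|\sigma|$, where $|x|^\sigma$ lies only in weak $L^{N/|\sigma|}$, is excluded on both. Concretely, on $\{|x|\geqslant1\}$ any admissible choice (including the crude bound $|x|^\sigma\leqslant1$) produces a kernel exponent $\frac N2(\frac1a-\frac1{p_1})$ that falls short of the critical value $\frac N2(\frac{\rho_1}{p_2}-\frac1{p_1})+\frac{|\sigma|}2$ by some $\delta>0$, so this piece of the Duhamel term is only $O(\langle t\rangle^{-1/q_1+\delta})$; since $I=[0,\infty)$, the quantity $\sup_{t\in I}\langle t\rangle^{1/q_1}\|\Phi_1 v(\cdot,t)\|_{L^{p_1}}$ is then infinite and the ball is not mapped into itself. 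No interpolation between $\|v\|_{L^{p_2}}$ and $\|v\|_{L^{r_2}}$ can repair this: at the critical exponents every norm has fixed scaling, and discarding the decay of $|x|^\sigma$ at spatial infinity loses exactly $|\sigma|/2$ of scaling that cannot be recovered. (Your scheme is fine for $\sigma=0$, or for a local-in-time result, but not for global existence at $r_i=(r_i)_c$ with $\sigma<0$.) To close the proof you must either keep the weight inside the solution operator and invoke \eqref{(7)} as the paper does --- the subordination formula \eqref{010} transfers the Hardy--H\'enon smoothing of the heat semigroup to $S_\alpha(t)$ --- or replace the Lebesgue H\"older step by H\"older in Lorentz spaces, using $|x|^\sigma\in L^{N/|\sigma|,\infty}$, which recovers the sharp factor.
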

\begin{Thm}\label{th23}%{\bf{(Blow-up solution)}}
If $1<{\rho_1}{\rho_2}\leqslant({\rho_1}{\rho_2})_c$ and $u_{0}(x),v_{0}(x)\geqslant0$, then the system \eqref{(6)} admits no nontrivial global weak solution.
\end{Thm}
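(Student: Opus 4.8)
The plan is to run the test-function (nonlinear capacity) method of Theorem \ref{th.1} in the coupled setting; the genuinely new ingredient is the way the two nonlinearities feed into each other, so that a single scaling exponent governs both components. Suppose for contradiction that $(u,v)$ is a nontrivial global weak solution. Applying the weak formulation (as in Definition \ref{def3}) to each equation of \eqref{(6)} against a nonnegative test function $\varphi$, transferring $\partial_t$, $\Delta$ and $D_{0^+}^\alpha$ onto $\varphi$ (the adjoint of $D_{0^+}^\alpha$ under the time integral being the right Riemann--Liouville derivative $D_{t^-}^\alpha$), and discarding the nonnegative contributions of $u_0,v_0$, I obtain the pair of inequalities
\[
I_1:=\int_0^\infty\!\!\int_{\mathbb{R}^N}|x|^\sigma t^\gamma v^{\rho_1}\varphi\,dx\,dt\leq\int_0^\infty\!\!\int_{\mathbb{R}^N}u\,\Psi\,dx\,dt,
\]
\[
I_2:=\int_0^\infty\!\!\int_{\mathbb{R}^N}|x|^\sigma t^\gamma u^{\rho_2}\varphi\,dx\,dt\leq\int_0^\infty\!\!\int_{\mathbb{R}^N}v\,\Psi\,dx\,dt,
\]
where $\Psi=|\partial_t\varphi|+|\Delta\varphi|+k|\Delta D_{t^-}^\alpha\varphi|$ and the nonnegativity of $u,v$ has been used.

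For the test function I would take $\varphi=\varphi_R(x,t)=\eta(|x|/R)^{\ell}\zeta(t/R^2)^{\ell}$ with smooth cutoffs $\eta,\zeta$ and $\ell$ large, reflecting the parabolic scaling $x\sim R$, $t\sim R^2$ that underlies $\rho_c=1+(\sigma+2(\gamma+1))/N$; the large power $\ell$ guarantees that the negative powers of $\varphi_R$ generated by Hölder stay integrable. Applying Hölder's inequality with exponents $(\rho_2,\rho_2')$ to the first inequality and $(\rho_1,\rho_1')$ to the second (or an $L^\infty$ bound when some $\rho_i=1$) yields
\[
I_1\leq I_2^{1/\rho_2}A_R,\qquad I_2\leq I_1^{1/\rho_1}B_R,
\]
with $A_R=\big(\int\!\!\int(|x|^\sigma t^\gamma)^{-1/(\rho_2-1)}\varphi_R^{-1/(\rho_2-1)}\Psi^{\rho_2'}\big)^{1/\rho_2'}$ and $B_R$ defined symmetrically with $\rho_1$. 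Since $\sigma,\gamma\le 0$, the weight $(|x|^\sigma t^\gamma)^{-1/(\rho_i-1)}$ is a nonnegative power of $|x|$ and $t$, hence harmless, and the change of variables $x=R\xi$, $t=R^2\tau$ gives $A_R\sim R^{[(\rho_2-1)N-(\sigma+2(\gamma+1))]/\rho_2}$ and analogously for $B_R$; the fractional contribution $k|\Delta D_{t^-}^\alpha\varphi_R|\sim R^{-2-2\alpha}$ is of lower order than $|\partial_t\varphi_R|,|\Delta\varphi_R|\sim R^{-2}$ and therefore does not affect the exponent.

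Eliminating $I_2$ gives $I_1\le\big(A_RB_R^{1/\rho_2}\big)^{\rho_1\rho_2/(\rho_1\rho_2-1)}$, and a direct computation of the resulting exponent (using the normalization $\rho_1\ge\rho_2$) reduces it to
\[
I_1\leq C\,R^{\,N-\frac{(\sigma+2(\gamma+1))(\rho_1+1)}{\rho_1\rho_2-1}}.
\]
By \eqref{ppc} and $\rho_1\ge\rho_2$ one has $N-\frac{(\sigma+2(\gamma+1))(\rho_1+1)}{\rho_1\rho_2-1}\le 0$ exactly when $\rho_1\rho_2\le(\rho_1\rho_2)_c$. In the strictly subcritical case the exponent is negative, so letting $R\to\infty$ forces $I_1\to 0$; by monotone convergence $\int\!\!\int|x|^\sigma t^\gamma v^{\rho_1}=0$, whence $v\equiv0$, and the second equation then gives $u\equiv0$, contradicting nontriviality.

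The critical case $\rho_1\rho_2=(\rho_1\rho_2)_c$, where the exponent vanishes and the estimate only yields $I_1\le C$, is the main obstacle and requires a Weissler-type refinement. Here I would first extract from the boundedness of $I_1,I_2$ the global integrability $\int\!\!\int|x|^\sigma t^\gamma v^{\rho_1}<\infty$ (and the analogue for $u$), and then repeat the Hölder step keeping the integrals restricted to the support $\Omega_R$ of the derivatives of $\varphi_R$, an annular region that recedes to infinity as $R\to\infty$. Since the full nonlinearity is integrable, the restricted quantities $\int\!\!\int_{\Omega_R}|x|^\sigma t^\gamma u^{\rho_2}$ and $\int\!\!\int_{\Omega_R}|x|^\sigma t^\gamma v^{\rho_1}$ tend to $0$ by dominated convergence while $A_R,B_R$ remain bounded; this upgrades the bound to $I_1\le o(1)$, forcing $I_1=0$ and again $u\equiv v\equiv0$. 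The two remaining technical points to verify carefully are the sharp lower-order estimate of $\Delta D_{t^-}^\alpha\varphi_R$ (confirming the fractional term never enters the critical balance) and the choice of $\ell$ ensuring that all the capacity integrals converge.
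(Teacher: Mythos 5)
Your proposal is correct and takes essentially the same route as the paper's proof: the same cross-H\"older (nonlinear capacity) estimates with parabolic scaling $T=R^{2}$, the same elimination step yielding $I_{1}\leqslant CR^{\,N-\frac{(\rho_1+1)(\sigma+2\gamma+2)}{\rho_1\rho_2-1}}$, and the same Weissler-type refinement in the critical case (finiteness of the full space-time integral plus the fact that the capacity integrals live on regions receding to infinity). The only differences are cosmetic: the paper obtains your two coupled inequalities by directly invoking the single-equation estimate \eqref{(11)} with $\rho=\rho_2$ and $\rho=\rho_1$, and it uses the eigenfunction-based cutoff $\xi_R$ together with the explicit time factor \eqref{42}, for which $D^{\alpha}_{T^-}\theta$ is computed exactly in Lemma \ref{lemma.9} and Lemma \ref{lemma.10} --- precisely the technical point you flag as remaining to verify for your smooth cutoff.
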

\begin{Rem}
Similarly, a combination of Theorem \ref{th21} and Theorem \ref{th23} ensures that $({\rho_1}{\rho_2})_c$ given by \eqref{ppc} is the critical Fujita curve to the system \eqref{(6)}, namely, the system \eqref{(6)} admits no nontrivial global mild solution if $1<{\rho_1}{\rho_2}\leqslant({\rho_1}{\rho_2})_c$,
but has both global and non-global mild solutions if ${\rho_1}{\rho_2}>({\rho_1}{\rho_2})_c$, depending on the size of initial data.
\end{Rem}
\begin{Rem}\label{general}
It is clear that the more general system
\begin{equation}\label{eq2}
\begin{cases}
\partial_{t}u-(1+kD_{0^+}^{\alpha_1})\Delta u=|x|^{\sigma_1} t^{\gamma_1} v^{{\rho_1}}, & x\in \mathbb{R}^{N}, \ t>0,\\
\partial_{t}v-(1+kD_{0^+}^{\alpha_2})\Delta v=|x|^{\sigma_2} t^{\gamma_2} u^{{\rho_2}}, & x\in \mathbb{R}^{N}, \ t>0,\\
(u(x,0),v(x,0))=(u_{0}(x),v_{0}(x)), & x\in \mathbb{R}^{N},
\end{cases}
\end{equation}
could be analyzed with the same method, where $\rho_1,\rho_2\geqslant1$ with $\rho_1\rho_2>1$, $k>0$, $\alpha_1,\alpha_2\in(0,1)$, $\sigma_1, \sigma_2, \gamma_1, \gamma_2\leqslant0$, $u_{0}(x)$ and $v_{0}(x)$ are nonnegative.
\end{Rem}

In this paper, we deal with the Cauchy problems \eqref{(2)} and \eqref{(6)} with space and time singular nonlinearity. We find the critical Fujita exponent for the nonlinear Rayleigh-Stokes equation and the corresponding system. It is shown that the more complicated critical case belongs to the blow-up case. Here the main contributions of this paper are summarized as follows.

First, we consider the nonlinear Rayleigh-Stokes equation \eqref{(2)} for $\rho>1$ and we are much interested in the large time behavior of solutions. The Rayleigh-Stokes problem is an important physical model among non-Newtonian fluids. The time-fractional derivative appearing in the highest order term has been found more flexible in describing viscoelastic behaviors.
Second, we identify the critical Fujita exponent $\rho_{c}$ that separates systematic blow-up from possible global existence. More precisely, if $1<\rho\leqslant\rho_{c}$, any solution to \eqref{(2)} blows up in finite time. Conversely, if $\rho>\rho_{c}$, problem \eqref{(2)} admits global solutions as long as $u_{0}$ is chosen sufficiently small. It is noted that the critical exponent is independent of the parameter $\alpha$.
Third, we obtain the critical Fujita curve $(\rho_1\rho_2)_{c}$ for the corresponding system \eqref{(6)}. It is shown that the solution to \eqref{(6)} blows up in finite time when $1<\rho_1\rho_2\leqslant(\rho_1\rho_2)_{c}$, while the global solution exists for suitably small initial data when $\rho_1\rho_2>(\rho_1\rho_2)_{c}$.
Finally, different from the general scaling method for parabolic problems, we have to use distinctive techniques to treat the influence from the nonlocal viscous term. 
For the global existence results, by means of the subordination principle in the sense of Pr\"{u}ss (see \cite[Section 4]{J93}) and estimates of the underlying relaxation function, we construct useful $L^{p}$-$L^{r}$ estimates of the solution operator, then use the contraction-mapping principle to obtain the global solutions.
As to the blow-up results, we use the method to show the energy blowing-up. This relies heavily on the construction and a series of precise estimates for
the time-component of the test function. 
Of course, these global existence and blow-up conclusions and their proof also fit for semilinear heat $(k=0)$ and pseudo-parabolic $(\alpha=1)$ equations, but the proof is more complicated than the proof for the case $k=0$ and the case $\alpha=1$.
It is a necessary and important step for our results to show that a mild solution of \eqref{(2)} is also a weak solution. As a preparation, an operator family associated to this problem is defined by Dunford integral and its regularity properties are investigated. Then we use the idea of limit approximation to avoid the lack of regularity of the solutions.

The rest of this paper is organized as follows. In Section 2, we review some notations, definitions and preliminary results. Then we show some estimates for the solution operator. Section 3 is devoted to prove the local and small global well-posedness for the problem \eqref{(2)} given in Theorem \ref{th.4}, Theorem \ref{thm101} and for the coupled system \eqref{(6)} given in Theorem \ref{th21}. Section 4 is concerned with the nonexistence of global nontrivial solution, we prove Theorem \ref{th.1} and Theorem \ref{th23}.
\section{Preliminaries and main lemmas}
In this section, we review some definitions and present preliminary facts, which are used throughout this paper. %Also, we obtain an integral formula which is equivalent to \eqref{(2)}.

%Donate by $\mathcal{S}(\mathbb{R}^{N})$ and $\mathcal{S}'(\mathbb{R}^{N})$ the Schwartz space and the space of Schwartz distribution functions respectively.

Let $\mathcal{F}$ and $\mathcal{F}^{-1}$ represent the Fourier transform pair, $\mathcal{L}$ and $\mathcal{L}^{-1}$ represent the Laplace transform pair. Let $\mathbb{R}_+$ be the set of nonnegative reals, $\mathbb{R}^{+}$ be the set of positive reals and $\mathbb{C}$ be the set of complex numbers.
For any $p\geqslant 1$, $p'$ stands for the dual to $p$, i.e. $1/p+1/p'=1$.
The symbol $\Gamma$ stands for the usual Euler gamma function, which is given by
$$\Gamma(\alpha)=\int^\infty_0t^{\alpha-1}e^{-t}dt.$$
The special beta function ${\rm{B}}:(0,\infty)\times(0,\infty)\rightarrow(0,\infty)$ is defined by
\begin{align*}
{\rm{B}}(x,y)=\int^{1}_{0}(1-s)^{x-1}s^{y-1}ds.
\end{align*}
Recall that
\begin{align*}
{\rm{B}}(x,y)=\frac{\Gamma(x)\Gamma(y)}{\Gamma(x+y)}.
\end{align*}
Let $\alpha\in(0,1)$. Now we recall some materials from fractional calculus. For more details, we can see \cite{Y2014}.
\begin{Def}
The left and right Riemann-Liouville fractional integrals of order $\alpha$ for $f(t)\in L^{1}(0,T)$ are defined as follows:
\begin{align*}
\left(I^{\alpha}_{0^+}f\right)(t)=\frac{1}{\Gamma(\alpha)}\int^{t}_{0}(t-s)^{\alpha-1}f(s)ds, \ t>0,
\end{align*}
and
\begin{align*}
\left(I^{\alpha}_{T^-}f\right)(t)=\frac{1}{\Gamma(\alpha)}\int^{T}_{t}(s-t)^{\alpha-1}f(s)ds, \ t<T.
\end{align*}
\end{Def}
Let $AC(I)$ be the space of absolutely continuous functions on the  interval $I$.
\begin{Def}\label{def4}
The left and right Riemann-Liouville fractional derivative of order $\alpha$ for a function $f\in AC([0,T])$, are defined as follows:
\begin{align*}
D^{\alpha}_{0^+}f(t)=\frac{d}{dt}\left(I^{1-\alpha}_{0^+}f\right)(t)
=\frac{1}{\Gamma(1-\alpha)}\frac{d}{dt}\int^{t}_{0}(t-s)^{-\alpha}f(s)ds, \ t>0,
\end{align*}
and
\begin{align*}
D^{\alpha}_{T^-}f(t)=-\frac{d}{dt}\left(I^{1-\alpha}_{T^-}f\right)(t)
=-\frac{1}{\Gamma(1-\alpha)}\frac{d}{dt}\int^{T}_{t}(s-t)^{-\alpha}f(s)ds, \ t<T.
\end{align*}
\end{Def}
Furthermore, we have
\begin{align}\label{41}
D^{\alpha}_{T^-}\left(f(t)-f(T)\right)=-\frac{1}{\Gamma(1-\alpha)}\int^{T}_{t}(s-t)^{-\alpha}f'(s)ds
=-\left(I^{1-\alpha}_{T^-}f'\right)(t).
\end{align}
Also, it is easy to check that for any $f\in L^p(0,T)$, $p>\frac{1}{\alpha}$, $I^\alpha_{0^+}f$ is H\"{o}lder continuous with exponent $\alpha-\frac{1}{p}>0$ and
$$\lim_{t\to0}I^\alpha_{0^+}f(t)=0.$$

% We  use the following standard notation for the Laplace transform of function $u$
%$$\mathcal {L}\{u(t)\}(z)=\widehat{u}(z)=\int^\infty_0e^{-zt}u(t)dt,\ z\in\mathbb{C}.$$
%Note that $\mathcal {L}\{t^{\alpha-1}\}(z)=\Gamma(\alpha)z^{-\alpha}$, for any $\alpha>0$, $t>0$.
For $\delta>0$ and $\theta\in(0,\pi)$, we introduce the contour $\Gamma_{\delta,\theta}$ defined by
$$\Gamma_{\delta,\theta}=\{re^{-i\theta}:r\geqslant\delta\}\cup\{\delta e^{i\psi}:|\psi|\leqslant\theta\}\cup\{re^{i\theta}:r\geqslant\delta\},$$
where the circular arc is oriented counterclockwise, and the two rays are oriented with an increasing imaginary part. Further, we denote by $\Sigma_\theta$ the sector
$$\Sigma_\theta=\{z\in\mathbb{C}:z\neq0,|{\rm{arg}}z|<\theta\}.$$

By integrating both sides of the governing equation in \eqref{(2)}, we obtain
\begin{equation}\label{03}
u(x,t)=u_0(x)-\int^t_0h(t-s)Au(x,s)ds+\int^t_0|x|^\sigma s^\gamma u^\rho(x,s)ds,
\end{equation}
where the kernel $h(t)$ is given by
\begin{equation}\label{k}
h(t)=1+\frac{k t^{-\alpha}}{\Gamma(1-\alpha)}.
\end{equation}
Let $1<p<\infty$, the operator $A$ is defined by $A=-\Delta$ in $L^p$ with the domain
$$D(A)=\big\{u\in L^p :Au\in L^p \big\}.$$
It is well known that the operator $-A$ generates a bounded analytic semigroup $e^{t\Delta}$ of angle $\frac{\pi}{2}$, i.e. for any $\theta\in(0,\frac{\pi}{2})$ and $z\in\Sigma_{\pi-\theta}$, there exists a constant $C>0$ such that
\begin{equation}\label{z1}
\|(z+A)^{-1}\|\leqslant C|z|^{-1},
\end{equation}
where $\|\cdot\|$ represents the norm of the operator.
Meanwhile, for $v\in L^p$, we have
\begin{equation}\label{e}
(z+A)^{-1}v=(4\pi)^{-\frac{N}{2}}\int_{\mathbb{R}^N}e^{ix\xi}\frac{1}{z+|\xi|^2}\mathcal {F}(v)(\xi)d\xi,
\end{equation}
for $x\in\mathbb{R}^N $ and $z\in\Sigma_{\pi-\theta}$. For more details, we refer to \cite{20,PZ2024} and the references therein.

Then the Laplace transform applied to \eqref{03} yields
%$$\widehat{w}(z)+\widehat{h}(z)A\widehat{w}(z)=\frac{w_0}{z}+\frac{\widehat{f}(z)}{z},$$
%that is,
% $$\widehat{w}(z)=\frac{g(z)}{z}(g(z)I+A)^{-1}w_0+\frac{g(z)}{z}(g(z)I+A)^{-1}\widehat{f}(z)$$ with $g(z)$ given by
\begin{equation}\label{91}
u(x,t)=S_{\alpha}(t)u_0(x)+\int^t_0 S_{\alpha}(t-s)|x|^\sigma s^\gamma u^\rho(x,s)ds,
\end{equation}
where the solution operator $S_{\alpha}(t)$ is given by
\begin{equation}\label{S}
S_{\alpha}(t)=\frac{1}{2\pi i}\int_{\Gamma_{\delta,\pi-\theta}}e^{zt}\frac{g(z)}{z}(g(z)I+A)^{-1}dz,
\end{equation}
and
\begin{equation}\label{04}
g(z)=\frac{1}{\mathcal{L}(h(\cdot))(z)}=\frac{z}{1+kz^\alpha}.
\end{equation}

In the following, we show some known results which are useful for our proof.

\begin{Lem}\label{lem01}{\cite[Lemma 4.1.1]{A1995}}
Given $\theta\in(0,\frac{\pi}{2})$, let $\Gamma$ be an arbitrary piecewise smooth simple curve in $\Sigma_{\theta+\frac{\pi}{2}}$ running from $\infty e^{-i(\theta+\frac{\pi}{2})}$ to $\infty e^{i(\theta+\frac{\pi}{2})}$, and let $X$ be a Banach space. Suppose that the map $g:\Sigma_{\theta+\frac{\pi}{2}}\times X\times \mathbb{R}^{+}\to X$ has the following properties:\\
{\rm{(i)}} $g(\cdot,x,t):\Sigma_{\theta+\frac{\pi}{2}}\to X$ is holomorphic for $(x,t)\in X\times \mathbb{R}^{+}$;\\
{\rm{(ii)}} $g(z,\cdot,\cdot)\in C(X\times \mathbb{R}^{+};X)$ for $z\in\Sigma_{\theta+\frac{\pi}{2}}$;\\
{\rm{(iii)}} there are constants $\vartheta\in\mathbb{R}$ and $C>0$ such that for $(z,x,t)\in \Sigma_{\theta+\frac{\pi}{2}}\times X\times \mathbb{R}^{+}$
\begin{equation}\label{et}
\|g(z,x,t)\|_X\leqslant C|z|^{\vartheta-1}e^{{\rm{Re}}(z)t}.
\end{equation}\\
Then $(x,t)\mapsto \int_{\Gamma}g(z,x,t)dz\in C(X\times \mathbb{R}^{+};X)$ and
$$\left\|\int_{\Gamma}g(z,x,t)dz\right\|_X\leqslant C|t|^{-\vartheta}, \ \ (x,t)\in X\times \mathbb{R}^{+}.$$
\end{Lem}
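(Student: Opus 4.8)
The plan is to prove the two assertions --- the norm bound and the continuity --- separately, after first establishing that the value of the integral does not depend on the chosen admissible contour $\Gamma$, which frees me to use whichever contour is most convenient for each part.

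First I would exploit hypothesis (i): for each fixed $(x,t)$ the map $z\mapsto g(z,x,t)$ is holomorphic on the open sector $\Sigma_{\theta+\frac{\pi}{2}}$. Given two admissible contours, I would close them up by two circular arcs of radius $R$ lying near the boundary rays $\arg z=\pm(\theta+\frac{\pi}{2})$; the enclosed region sits inside the sector, so Cauchy's theorem forces the integral over the closed loop to vanish. On those connecting arcs $\arg z$ is close to $\pm(\theta+\frac{\pi}{2})>\frac{\pi}{2}$, whence $\mathrm{Re}(z)=|z|\cos(\arg z)\leqslant -c|z|$ for some $c>0$, and the bound \eqref{et} majorises the arc contribution by $CR^{\vartheta}e^{-cRt}\to0$ as $R\to\infty$. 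Thus $\int_{\Gamma}g(z,x,t)\,dz$ is independent of the admissible $\Gamma$.

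For the norm bound I would fix an angle $\phi\in(\frac{\pi}{2},\theta+\frac{\pi}{2})$, write $\cos\phi=-\delta_0$ with $\delta_0>0$, and for each $t>0$ take the $t$-scaled contour $\Gamma_t$ consisting of the rays $\{re^{\pm i\phi}:r\geqslant 1/t\}$ and the arc $\{t^{-1}e^{i\psi}:|\psi|\leqslant\phi\}$. On the rays \eqref{et} gives $\|g\|_X\leqslant Cr^{\vartheta-1}e^{-\delta_0 rt}$, and the substitution $s=rt$ turns $\int_{1/t}^{\infty}r^{\vartheta-1}e^{-\delta_0 rt}\,dr$ into $t^{-\vartheta}\int_{1}^{\infty}s^{\vartheta-1}e^{-\delta_0 s}\,ds$, a convergent (gamma-type) integral, so this piece is bounded by $Ct^{-\vartheta}$. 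On the arc $|z|=t^{-1}$, $|dz|=t^{-1}d\psi$ and $e^{\mathrm{Re}(z)t}\leqslant e$, so \eqref{et} gives $C\,t^{1-\vartheta}\,e\,t^{-1}\!\int_{-\phi}^{\phi}d\psi=Ct^{-\vartheta}$. Summing the two pieces yields $\big\|\int_{\Gamma}g\,dz\big\|_X\leqslant C|t|^{-\vartheta}$, the value being the same for any admissible $\Gamma$ by the previous step.

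For the continuity claim I would fix one admissible contour $\Gamma_*$ bounded away from the origin (e.g. the rays $\{re^{\pm i\phi}:r\geqslant\delta\}$ joined by the small arc at radius $\delta$) and establish continuity at an arbitrary $(x_0,t_0)\in X\times\mathbb{R}^{+}$. Restricting to $t\geqslant t_0/2>0$, the weight $e^{\mathrm{Re}(z)t}$ in \eqref{et} is dominated by $e^{\mathrm{Re}(z)t_0/2}$ on the rays (where $\mathrm{Re}(z)=-\delta_0 r$), so the integrand is majorised by the fixed integrable function $C|z|^{\vartheta-1}e^{\mathrm{Re}(z)t_0/2}$ on $\Gamma_*$; integrability on the bounded arc is automatic since $|z|^{\vartheta-1}$ stays finite there. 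Hypothesis (ii) supplies the pointwise convergence $g(z,x,t)\to g(z,x_0,t_0)$ as $(x,t)\to(x_0,t_0)$, and the Banach-space-valued dominated convergence theorem then lets me pass the limit under the integral. The main obstacle I anticipate is precisely the bookkeeping around this $t$-dependence: the same factor $e^{\mathrm{Re}(z)t}$ that must be rescaled to extract the sharp power $t^{-\vartheta}$ degrades as $t\to0$, forcing the dominating function to be chosen locally (on $[t_0/2,\infty)$) rather than uniformly on $\mathbb{R}^{+}$. Since continuity is only asserted on the open half-line $\mathbb{R}^{+}$, this localisation is harmless, and together with the contour-independence step it reduces the whole statement to an elementary gamma integral plus a dominated-convergence argument.
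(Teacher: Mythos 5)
The paper itself offers no proof of this lemma: it is quoted directly from \cite[Lemma 4.1.1]{A1995}, so there is no in-paper argument to compare against. Your proof is correct, and it is essentially the standard argument (the one used in the cited source and throughout analytic-semigroup theory): contour independence via Cauchy's theorem with closing arcs that vanish because $\mathrm{Re}(z)\leqslant -c|z|$ there, a contour rescaled to radius $1/t$ so that the substitution $s=rt$ produces the sharp power $t^{-\vartheta}$ from a gamma-type integral, and Bochner-space dominated convergence for continuity, localized to $t\geqslant t_0/2$ so that the dominating function can be chosen once and for all on a fixed contour.

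One piece of bookkeeping should be tightened. The convenient contours $\Gamma_t$ and $\Gamma_*$ you integrate over have rays along $\arg z=\pm\phi$ with $\phi<\theta+\frac{\pi}{2}$, so they are not \emph{admissible} in the sense of the statement (their asymptotic directions differ from $\pm(\theta+\frac{\pi}{2})$), and your Step 1, which establishes independence only among admissible contours, does not literally license the replacement of $\Gamma$ by $\Gamma_t$. The gap is harmless and closes with one sentence: the same Cauchy-plus-arc argument deforms $\Gamma$ onto $\Gamma_t$, since on the closing arcs at radius $R$ the argument of $z$ lies between $\phi$ and roughly $\theta+\frac{\pi}{2}$, so $\mathrm{Re}(z)\leqslant R\cos\phi=-\delta_0R$ and the arc contribution is again $O\bigl(R^{\vartheta}e^{-\delta_0Rt}\bigr)\to0$ as $R\to\infty$. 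With that addition (and noting, on the small arc of $\Gamma_*$, that the factor $e^{\mathrm{Re}(z)t}$ is bounded uniformly for $t\leqslant 2t_0$, which your phrase ``automatic'' leaves implicit), the proof is complete.
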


Then we show some properties of the operator $S_{\alpha}(t)$ in the following propositions.
\begin{Lem}\label{P1}
Let $1<p<\infty$. For the operator $S_{\alpha}(t)$ defined by \eqref{S}, the following properties hold.\\
{\rm{(i)}} For all $v\in L^p$, $\lim_{t\to0^+}S_{\alpha}(t)v=v$;\\
{\rm{(ii)}} For all $v\in L^p$ and $t>0$,
\begin{align}\label{P}
S'_{\alpha}(t)v+(1+kD^{\alpha}_{0^+})AS_{\alpha}(t)v=0;
\end{align}
{\rm{(iii)}} For all $v\in L^p $ and $t>0$,
\begin{align*}
S_{\alpha}(t)v=v-A\int^t_0h(t-s)S_{\alpha}(s)vds.
\end{align*}
\end{Lem}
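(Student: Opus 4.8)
The plan is to read the operator $S_{\alpha}(t)$ in \eqref{S} as the inverse Laplace transform of $F(z):=\frac{g(z)}{z}(g(z)I+A)^{-1}$, where $g$ is given by \eqref{04}, and to extract all three properties from this representation together with the resolvent bound \eqref{z1}. The crucial preliminary is the estimate $\|F(z)\|\le C|z|^{-1}$ on $\Gamma_{\delta,\pi-\theta}$: since $\frac{g(z)}{z}=\frac1{1+kz^{\alpha}}$ and $\|(g(z)I+A)^{-1}\|\le C|g(z)|^{-1}$ by \eqref{z1}, a short computation using $|g(z)|\sim k^{-1}|z|^{1-\alpha}$ for large $|z|$ gives $\|F(z)\|\le C|z|^{-1}$ on the whole contour. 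Applying Lemma \ref{lem01} with $\vartheta=0$ then shows that $S_{\alpha}(t)$ is well defined, strongly continuous on $(0,\infty)$, and uniformly bounded, $\sup_{t>0}\|S_{\alpha}(t)\|\le C$. Moreover, from the identity $A(g(z)I+A)^{-1}=I-g(z)(g(z)I+A)^{-1}$ one has $\|A(g(z)I+A)^{-1}\|\le C$, so the integral defining $AS_{\alpha}(t)v$ converges for every $t>0$ (the factor $e^{zt}$ decays exponentially on the rays), whence $S_{\alpha}(t)v\in D(A)$ and $t\mapsto AS_{\alpha}(t)v$ is continuous on $(0,\infty)$.

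I would prove (iii) first. Because $S_{\alpha}$ is bounded, $S_{\alpha}(\cdot)v$ has a Laplace transform for $\mathrm{Re}\,\lambda>0$, and the inverse-transform representation gives $\mathcal{L}(S_{\alpha}(\cdot)v)(\lambda)=F(\lambda)v$; moreover $\mathcal{L}(h)(\lambda)=g(\lambda)^{-1}$ by \eqref{k} and \eqref{04}. Transforming the right-hand side of (iii) and using the convolution theorem yields
$$\frac{v}{\lambda}-A\,\mathcal{L}(h)(\lambda)\,\mathcal{L}(S_{\alpha}(\cdot)v)(\lambda)=\frac1\lambda\bigl(I-A(g(\lambda)I+A)^{-1}\bigr)v=\frac{g(\lambda)}{\lambda}(g(\lambda)I+A)^{-1}v,$$
which is exactly $\mathcal{L}(S_{\alpha}(\cdot)v)(\lambda)$. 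By uniqueness of the Laplace transform, (iii) follows.

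Property (ii) then comes from differentiating (iii) in $t$. Writing $h(t)=1+\frac{k}{\Gamma(1-\alpha)}t^{-\alpha}$, the convolution splits as $A\int_0^t S_{\alpha}(s)v\,ds+kA\,I^{1-\alpha}_{0^+}(S_{\alpha}(\cdot)v)(t)$; differentiating and using the continuity of $s\mapsto AS_{\alpha}(s)v$ together with the closedness of $A$ to pass it through the time integral, the fractional integral and the derivative, produces $AS_{\alpha}(t)v+kD^{\alpha}_{0^+}(AS_{\alpha}(\cdot)v)(t)=(1+kD^{\alpha}_{0^+})AS_{\alpha}(t)v$, so that $S'_{\alpha}(t)v=-(1+kD^{\alpha}_{0^+})AS_{\alpha}(t)v$. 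For (i) I would again use (iii): for $v\in D(A)$, since $S_{\alpha}(s)$ commutes with $A$,
$$\|S_{\alpha}(t)v-v\|=\Bigl\|\int_0^t h(t-s)S_{\alpha}(s)Av\,ds\Bigr\|\le\Bigl(\sup_{s>0}\|S_{\alpha}(s)\|\Bigr)\|Av\|\int_0^t h(\tau)\,d\tau,$$
and $\int_0^t h(\tau)\,d\tau=t+\frac{k}{\Gamma(2-\alpha)}t^{1-\alpha}$, which is comparable to $\langle t\rangle$ in \eqref{(23)} and tends to $0$ as $t\to0^{+}$. Thus $S_{\alpha}(t)v\to v$ for $v\in D(A)$, and the density of $D(A)$ in $L^{p}$ combined with the uniform bound $\sup_{t}\|S_{\alpha}(t)\|\le C$ upgrades this to all $v\in L^{p}$ by a standard approximation argument.

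The main obstacle is analytic rather than algebraic: verifying the subordination-type mapping property of $g$ that places $g(z)$ in a sector on which the resolvent estimate \eqref{z1} is valid, so that the uniform bound $\|F(z)\|\le C|z|^{-1}$ (and hence $\sup_t\|S_{\alpha}(t)\|\le C$ via Lemma \ref{lem01}) actually holds. The second delicate point is the interchange in (ii) of the closed operators $A$ and $D^{\alpha}_{0^+}$ with the time integral and derivative, which is exactly where the smoothing property $S_{\alpha}(t)v\in D(A)$ and the continuity of $t\mapsto AS_{\alpha}(t)v$ on $(0,\infty)$ established in the first step are needed. Once these two ingredients are in place, the remaining Laplace-transform and convergence computations are routine.
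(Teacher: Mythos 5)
Your proposal is correct in substance but runs the argument in the opposite logical order from the paper, and the difference is worth recording. The paper proves (i) first, by subtracting the Cauchy-integral identity $\frac{1}{2\pi i}\int_{\Gamma}e^{zt}z^{-1}\,dz=1$ from \eqref{S} and invoking the algebraic identity \eqref{96} together with the refined resolvent estimate \eqref{004}, which yields $\|S_{\alpha}(t)v-v\|_{L^{p}}\leqslant Ct(1+kt^{-\alpha})\|Av\|_{L^{p}}$ on $D(A)$ followed by density; it proves (ii) by differentiating under the contour integral (its \eqref{95}) and using Lemma \ref{lem01} to get the smoothing bounds $\|S'_{\alpha}(t)v\|_{L^{p}}\leqslant Ct^{-1}\|v\|_{L^{p}}$ and $\|AS_{\alpha}(t)v\|_{L^{p}}\leqslant C\min\{t^{-1},t^{\alpha-1}\}\|v\|_{L^{p}}$; and it obtains (iii) last, by integrating (ii) from $s$ to $t$ and letting $s\to 0$, which is precisely where (i) and the local integrability of $h$ enter. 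You instead make (iii) the centerpiece, proving it by uniqueness of the Laplace transform --- your frequency-domain computation $\frac{1}{\lambda}\bigl(I-A(g(\lambda)I+A)^{-1}\bigr)=\frac{g(\lambda)}{\lambda}(g(\lambda)I+A)^{-1}$ is exactly the paper's identity \eqref{96} read on the transform side --- and then derive (ii) by differentiating (iii) and (i) by restricting (iii) to $D(A)$; this order is not circular, since Laplace-transform uniqueness only needs boundedness and strong continuity on $(0,\infty)$, not continuity at $t=0$. Both routes rest on the same analytic core you correctly isolate: the mapping property $g(\Sigma_{\pi-\theta})\subset\Sigma_{\pi-\theta}$ with $|g(z)|\leqslant C\min\{|z|,|z|^{1-\alpha}\}$ (the paper cites \cite[Lemma 2.1]{20} for this, and you should too rather than re-derive it), the resolvent bound \eqref{z1}, and Lemma \ref{lem01}. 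What your route buys is that (i) and (ii) become soft consequences of (iii), bypassing the contour manipulation and the estimate \eqref{004} altogether. What it costs is that two points you pass over quickly must still be extracted from the contour representation: first, the quantitative rate $\|AS_{\alpha}(s)v\|_{L^{p}}\leqslant Cs^{\alpha-1}\|v\|_{L^{p}}$ as $s\to 0$ (mere convergence of the integral defining $AS_{\alpha}(s)v$ at fixed $s$ is not enough --- you need this rate to make $h\ast AS_{\alpha}(\cdot)v$ absolutely convergent, to move the closed operator $A$ through the convolution, and to legitimately Laplace-transform the right-hand side of (iii)); second, the differentiability of $t\mapsto S_{\alpha}(t)v$, which you tacitly use when differentiating (iii) and which does not follow from (iii) itself but from differentiating \eqref{S} under the integral sign as in the paper's \eqref{95}. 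Both follow from Lemma \ref{lem01} with $\vartheta=1-\alpha$ and $\vartheta=1$ respectively; once you add them to your preliminary step, your argument closes and is a legitimate alternative to the paper's proof.
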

\begin{proof}
First, we prove the assertion {\rm{(i)}}. From \cite[Lemma 2.1]{20}, for any $z\in\Sigma_{\pi-\theta}$ and $\theta\in(0,\frac{\pi}{2})$, we have $g(z)\in\Sigma_{\pi-\theta}$ and
\begin{equation}\label{92}
|g(z)|\leqslant C\min\{|z|,|z|^{1-\alpha}\}.
\end{equation}
Then we deduce from \eqref{z1} that for any $z\in\Sigma_{\pi-\theta}$,
\begin{equation}\label{93}
\big\|e^{zt}\frac{g(z)}{z}(g(z)I+A)^{-1}\big\|\leqslant Ce^{{\rm{Re}}(z)t}|z|^{-1}.
\end{equation}
By Lemma \ref{lem01}, for any $t>0$, we have $S_{\alpha}(t)\in C((0,\infty);L^p )$ and
$$\|S_{\alpha}(t)v\|_{ L^p }\leqslant C\|v\|_{L^p }.$$

Next, we discuss the case of $t\to0^+$. We claim that for any $z\in\Sigma_{\pi-\theta}$,
\begin{equation}\label{004}
\|(g(z)I+A)^{-1}\|\leqslant C|z|^{-1}(1+\gamma|z|^\alpha).
\end{equation}
Indeed, since $z\in\Sigma_{\pi-\theta}$, i.e. $z=re^{i\psi}$, $|\psi|<\pi-\theta$, $r>0$, by \eqref{04}, then we have
$$|g(z)|=\Big|\frac{z}{1+\gamma z^\alpha}\Big|\geqslant\frac{|z|}{1+\gamma|z|^\alpha}.$$
It follows from \eqref{z1} that
\begin{equation*}
\|(g(z)I+A)^{-1}\|\leqslant\frac{C}{|g(z)|}\leqslant C|z|^{-1}(1+\gamma|z|^\alpha),
\end{equation*}
and the claim is valid. Let $t>0$, we choose $\delta=\frac{1}{t}$ and denote for short
$\Gamma=\Gamma_{1/t,\pi-\theta}$. %Let $t>0$, $\theta\in(0,\frac{\pi }{2})$, $\delta>0$. %We choose $\delta=\frac{1}{t}$ and denote for short $\Gamma=\Gamma_{1/t,\pi-\theta}$.
Then by \eqref{S} we have
\begin{align*}
S_{\alpha}(t)v-v &=\Big(\frac{1}{2\pi i}\int_{\Gamma}e^{zt}\frac{g(z)}{z}(g(z)I+A)^{-1}dz\Big) v- \Big(\frac{1}{2\pi i}\int_{\Gamma}e^{zt}\frac{1}{z}dz\Big) v\\
&=\Big(\frac{1}{2\pi i}\int_{\Gamma}e^{zt}\frac{1}{z}\big(g(z)(g(z)I+A)^{-1}-I\big)dz\Big) v.
\end{align*}
Using the identity
\begin{equation}\label{96}
g(z)(g(z)I+A)^{-1}-I=-A(g(z)I+A)^{-1},
\end{equation}
we have
$$S_{\alpha}(t)v-v=-\frac{1}{2\pi i}\int_{\Gamma}e^{zt}\frac{1}{z}(g(z)I+A)^{-1}Avdz.$$
Hence by the claim \eqref{004}, we have
\begin{align*}
\|S_{\alpha}(t)v-v\|_{L^{p}} &\leqslant C\int_{\Gamma}e^{{\rm{Re}}(z)t}\frac{1}{|z|}\|(g(z)I+A)^{-1}\|\|Av\|_{L^{p}}|dz|\\
&\leqslant C\int^\infty_{\frac{1}{t}}e^{-rt{\rm{cos}}\theta}\frac{1+\gamma r^\alpha}{r^2}dr\|Av\|_{L^{p}}+C\int^{\pi-\theta}_{\theta-\pi}e^{{\rm{cos}}\psi}t(1+\gamma t^{-\alpha})d\psi\|Av\|_{L^{p}}\\
&\leqslant Ct(1+\gamma t^{-\alpha})\|Av\|_{L^{p}}\to0,\ \text{as}\ t\to0^+.
\end{align*}
Since $A$ is closed and densely defined, we obtain further $ \|S_{\alpha}(t)v-v\|_{L^{p}}\to0$, as $t\to0^+$ for any $v\in L^{p}$, then the assertion {\rm{(i)}} is valid.

To prove the assertion {\rm{(ii)}}, we need to show that for $t>0$ the operator $S_{\alpha}(t)$ is differentiable and $S_{\alpha}(t)v\in D(A)$ for each $v\in L^p$.
From \eqref{S}, we have
\begin{equation}\label{95}
S'_{\alpha}(t)=\frac{1}{2\pi i}\int_{\Gamma}e^{zt}g(z)(g(z)I+A)^{-1}dz.
\end{equation}
Similar to \eqref{93}, by \eqref{z1}, we have
\begin{equation}\label{94}
\|e^{zt}g(z)(g(z)I+A)^{-1}\|\leqslant Ce^{{\rm{Re}}(z)t}.
\end{equation}
It follows from \eqref{95} and \eqref{94} that for any $v\in L^p$,
$$\|S'_{\alpha}(t)v\|_{L^p}\leqslant Ct^{-1}\|v\|_{L^p}.$$
Combined the identity \eqref{96} with \eqref{S}, we have
\begin{align*}
AS_{\alpha}(t)v=\frac{1}{2\pi i}\int_{\Gamma}e^{zt}\frac{g(z)}{z}\big(I-g(z)(g(z)I+A)^{-1}\big)dz.
\end{align*}
By \eqref{z1}, we have $\|g(z)(g(z)I+A)^{-1}\|\leqslant C$. Then it follows from \eqref{92} that
\begin{align*}
\big\|e^{zt}\frac{g(z)}{z}\big(I-g(z)(g(z)I+A)^{-1}\big)\big\|&\leqslant Ce^{{\rm{Re}}(z)t}|z^{-1}g(z)|\leqslant Ce^{{\rm{Re}}(z)t}\min\{1,|z|^{-\alpha}\}.
\end{align*}
Then we use Lemma \ref{lem01} again and obtain
$$\|AS_{\alpha}(t)v\|_{L^p}\leqslant C\min\{t^{-1},t^{\alpha-1}\}\|v\|_{L^p},$$
the assertion {\rm{(ii)}} is valid.

Recalling Definition \ref{def4}, we have
\begin{align}\label{(22)}
(S_{\alpha}(t)-S_{\alpha}(s))v&=-\int^{t}_{s}(1+kD^{\alpha}_{0^+})AS_{\alpha}(\tau)vd\tau\\
\nonumber&=-\int^{t}_{0}h(t-\tau)AS_{\alpha}(\tau)vd\tau+\int^{s}_{0}h(s-\tau)AS_{\alpha}(\tau)vd\tau.
\end{align}
Since $h(t)$ is locally integrable on $\mathbb{R}_{+}$, we obtain that for all $v\in D(A)$,
\begin{align*}
\lim_{s\rightarrow0}\int^{s}_{0}h(s-\tau)AS_{\alpha}(\tau)vd\tau=0.
\end{align*}
From (i), by letting $s\rightarrow0$ in \eqref{(22)}, we have
\begin{align*}
(S_{\alpha}(t)-I)v=\lim_{s\rightarrow0}(S_{\alpha}(t)-S_{\alpha}(s))v=-\int^{t}_{0}h(t-\tau)AS_{\alpha}(\tau)vd\tau.
\end{align*}
Since $A$ is closed and densely defined, it follows that $h\ast S_{\alpha}(t)v \in D(A)$ for each $v\in L^{p}$ and
\begin{align*}
S_{\alpha}(t)v=v-A\int^{t}_{0}h(t-\tau)S_{\alpha}(\tau)vd\tau,
\end{align*}
where $\ast$ donates the convolution. This means that (iii) holds and the proof is finished.
\end{proof}

Then we consider the following relaxation equation
\begin{equation}\label{(4)}
\begin{cases}
s'_{\mu}(t)+\mu(1+kD_{0^+}^{\alpha})s_{\mu}(t)=0, \ t>0,\\
s_{\mu}(0)=1,
\end{cases}
\end{equation}
corresponding to \eqref{(2)}, where the parameter $\mu\geqslant 0$. Here the so-called relaxation function $s(t,\mu):=s_{\mu}(t)$, $t\geqslant0$, is defined
as the solution of \eqref{(4)} to emphasize the dependence on the parameter $\mu$. It is noted that $s_{\mu}(t)$ is completely monotone, that is, $(-1)^{n}s^{(n)}_{\mu}(t)\geqslant0$ for all $t>0$ and $n=0,1,\ldots$, see \cite[Theorem 2.2]{20}. Then \eqref{(4)} is equivalent to the Volterra integral equation
\begin{align}\label{(5)}
s_{\mu}(t)+\mu(h\ast s_{\mu})(t)=1, \ t\geqslant0,
\end{align}
where the kernel $h(t)$ is defined in \eqref{k}.

Since the kernel $h\in L^1_{loc}(\mathbb{R}_+)$ is completely positive, by the subordination principle in the sense of Pr\"{u}ss \cite{J93}, we have
\begin{align}\label{w}
s(t,\mu)=\int^{\infty}_{0}e^{-\tau\mu}d_\tau(-w(t,\tau)), \ t>0,
\end{align}
where $w(t,\tau)$ is obtained by the relation
\begin{align*}
\mathcal{L}(w(\cdot,\tau))(z,\tau)=z^{-1}e^{-\tau g(z)},
\end{align*}
here the Laplace transform is taken only with respect to the first variable. The function $w(t,\tau)$ is the so-called propagation function associated with the completely positive kernel $h(t)$. Among others, $w(\cdot, \cdot)$ is a Borel measurable on $\mathbb{R}_+\times\mathbb{R}_+$, $w(t,\cdot)$ is nonincreasing and right-continuous on $\mathbb{R}_+$, and $w(t,0)=w(t,0^+)=1$ as well as $w(t,\infty)=0$ for all $t>0$. For more details on $w(t,\tau)$, see \cite{KS2016} and \cite[Section 4]{J93}.

%Hence, by means of the inverse Laplace transform, we deduce that
It follows from \eqref{e} and \eqref{S} that for any $v\in L^p$
\begin{align*}
S_{\alpha}(t)v &=\frac{1}{2\pi i}\int_{\Gamma}e^{zt}\frac{g(z)}{z}(g(z)I+A)^{-1}vdz \\
&=\frac{1}{2\pi i}\int_{\Gamma}e^{zt}\frac{g(z)}{z} (4\pi)^{-\frac{N}{2}}\int_{\mathbb{R}^N}e^{ix\xi}\frac{1}{g(z)+|\xi|^2}\mathcal{F}(v)(\xi)d\xi dz\\
&=\frac{1}{2\pi i}\int_{\mathbb{R}^N}e^{ix\xi}(4\pi)^{-\frac{N}{2}}\int_{\Gamma}e^{zt}\frac{g(z)}{z} \frac{1}{g(z)+|\xi|^2}dz \mathcal{F}(v)(\xi)d\xi\\
&=\frac{1}{2\pi i}\int_{\mathbb{R}^N}e^{ix\xi}(4\pi)^{-\frac{N}{2}}\int_{\Gamma}e^{zt}\frac{1}{z+|\xi|^2+kz^\alpha|\xi|^2}dz \mathcal{F}(v)(\xi)d\xi\\
&=\mathcal{F}^{-1}(s(t,|\xi|^2))\ast v.
\end{align*}
It follows from \eqref{w} that for $t>0$
\begin{equation}\label{010}
S_{\alpha}(t)v=\int^\infty_0\mathcal {F}^{-1}(e^{-\tau|\xi|^2})\ast vd_\tau(-w(t,\tau))=\int^\infty_0e^{\tau\Delta}vd_\tau(-w(t,\tau)).
\end{equation}

\begin{Lem}\label{lemma.2}
Let $s(t,\mu)$ be the solution of \eqref{(4)}. Then $s(t,\mu)$ satisfies the estimate
\begin{align}\label{(21)}
s(t,\mu)\leqslant \frac{C}{1+\mu\langle t \rangle}, \ t>0.
\end{align}
\begin{proof}
Since $s_{\mu}(t)$ is nonincreasing, we use \eqref{(5)} and have
\begin{align*}
s_{\mu}(t)+\mu s_{\mu}(t)(1\ast h)(t)\leqslant1.
\end{align*}
Then it follows that
\begin{align*}
s_{\mu}(t)\leqslant \frac{1}{1+\mu(1\ast h)(t)}.
\end{align*}
Recall \eqref{(23)}, it is easily seen that
\begin{align*}
\langle t \rangle < (1\ast h)(t) < \frac{\langle t \rangle}{\Gamma(2-\alpha)},
\end{align*}
which completes the proof of this lemma.
\end{proof}
\end{Lem}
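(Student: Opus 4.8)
The plan is to bypass the subordination representation \eqref{w} entirely and read the estimate off directly from the Volterra reformulation \eqref{(5)}, exploiting the monotonicity of the relaxation function. Recall that $s_\mu$ is completely monotone, hence in particular nonincreasing on $[0,\infty)$ with $s_\mu(0)=1$; thus $s_\mu(\tau)\geqslant s_\mu(t)$ whenever $0\leqslant\tau\leqslant t$. This single structural fact is what makes an elementary pointwise argument possible, in contrast to inverting the Laplace symbol $g(z)/\bigl(z(g(z)+\mu)\bigr)$ by hand.

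First I would bound the convolution term from below. Since the kernel in \eqref{k} satisfies $h(t)=1+\frac{kt^{-\alpha}}{\Gamma(1-\alpha)}>0$ for $t>0$, the monotonicity just noted gives
\begin{align*}
(h\ast s_\mu)(t)=\int_0^t h(t-\tau)s_\mu(\tau)\,d\tau\geqslant s_\mu(t)\int_0^t h(t-\tau)\,d\tau=s_\mu(t)\,(1\ast h)(t).
\end{align*}
Substituting this into \eqref{(5)} yields $1\geqslant s_\mu(t)\bigl(1+\mu(1\ast h)(t)\bigr)$, so that
\begin{align*}
s_\mu(t)\leqslant\frac{1}{1+\mu(1\ast h)(t)}.
\end{align*}

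It then remains to compare $(1\ast h)(t)$ with the weight $\langle t\rangle$ from \eqref{(23)}. A direct integration, using the identity $\Gamma(2-\alpha)=(1-\alpha)\Gamma(1-\alpha)$, gives $(1\ast h)(t)=t+\frac{kt^{1-\alpha}}{\Gamma(2-\alpha)}$. Because $\Gamma(2-\alpha)<1$ for every $\alpha\in(0,1)$, I expect the two-sided estimate $\langle t\rangle<(1\ast h)(t)<\langle t\rangle/\Gamma(2-\alpha)$; only the lower bound is actually needed, and it upgrades the previous display to $s_\mu(t)\leqslant(1+\mu\langle t\rangle)^{-1}$, which is the assertion (with $C=1$, or any $C\geqslant1$ if one prefers to keep a uniform constant).

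I do not anticipate a genuine analytic obstacle, since the argument collapses to two lines once the correct reformulation is chosen. The only ingredient that must be imported rather than derived is the complete monotonicity of $s_\mu$, quoted from \cite[Theorem 2.2]{20}; this is precisely what licenses factoring $s_\mu(t)$ out of the convolution, and without it the comparison would break down. Everything else—the positivity of $h$ and the gamma-constant bookkeeping—is routine.
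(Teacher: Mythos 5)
Your proposal is correct and follows essentially the same route as the paper's own proof: both use the monotonicity of $s_\mu$ to factor $s_\mu(t)$ out of the convolution in \eqref{(5)}, obtaining $s_\mu(t)\leqslant\bigl(1+\mu(1\ast h)(t)\bigr)^{-1}$, and then conclude via the lower bound $(1\ast h)(t)>\langle t\rangle$. Your version merely spells out the intermediate steps (the explicit computation $(1\ast h)(t)=t+\frac{kt^{1-\alpha}}{\Gamma(2-\alpha)}$ and the fact that $\Gamma(2-\alpha)<1$) that the paper leaves implicit.
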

Now we use \eqref{010}, \eqref{(21)} and the smoothing effect of the heat semigroup to study suitable decay estimates for the solution operator $S_{\alpha}(t)$, which plays an important role in proving the existence of solutions.
\begin{Lem}\label{lemma.4}
Let $1<r<p<\infty$ be such that $-\frac{N}{2}(\frac{1}{r}-\frac{1}{p})+\frac{\sigma}{2}>-1$, then
\begin{align}\label{(7)}
\|S_{\alpha}(t)|\cdot|^{\sigma}u_{0}\|_{L^{p}}\leqslant C\langle t \rangle^{-\frac{N}{2}(\frac{1}{r}-\frac{1}{p})+\frac{\sigma}{2}}\|u_{0}\|_{L^{r}},
\end{align}
for every $u_{0}\in L^{r}$.% Moreover, the family $\{S_{\alpha}(t)\}_{t\geqslant0}$ is strongly continuous in $L^{p}$.
\begin{proof}
%Combined \eqref{S} with \eqref{010}, we have
%$$S_{\alpha}(t)u_{0}(x)=\frac{1}{2\pi i}\int_{\Gamma_{\delta,\pi-\theta}}e^{zt}\frac{g(z)}{z}(g(z)I+A)^{-1}dz=\mathcal{F}^{-1}[ w(t,|\xi|^{2})]\ast u_{0}(x).$$
%Since $w(t,|\xi|^{2})$ is given by \eqref{w}, we have
For $-2<\sigma\leqslant0$, we have $0\leqslant-\sigma<2$. Hence, we can apply \eqref{010} and the smoothing effect of the heat semigroup given by \cite[Proposition 2.1]{ST2017} to deduce that
\begin{align}\label{1}
\begin{split}
\|S_{\alpha}(t)|\cdot|^{\sigma}u_{0}\|_{L^{p}}&\leqslant \int^{\infty}_{0}\|e^{\tau\Delta}|\cdot|^{\sigma}u_{0}\|_{L^{p}}d_\tau(-w(t,\tau))\\
&\leqslant C\int^{\infty}_{0}\tau^{-\frac{N}{2}(\frac{1}{r}-\frac{1}{p})+\frac{\sigma}{2}}d_\tau(-w(t,\tau))\|u_0\|_{L^r}.
\end{split}
\end{align}
Let $\varrho=\frac{N}{2}(\frac{1}{r}-\frac{1}{p})-\frac{\sigma}{2}$. Due to the assumption, it is seen that $\varrho\in(0,1)$. It follows from \eqref{(21)} that
\begin{align*}
\int^\infty_0 \mu^{\varrho-1}s(t,\mu)d\mu \leqslant \int^\infty_0 \frac{\mu^{\varrho-1}}{1+\mu\langle t \rangle}d\mu.
\end{align*}
Thus the last integral is convergent provided that $\varrho\in(0,1)$. Using \eqref{w} and Fubini's theorem, we have
\begin{align}\label{2}
\begin{split}
\frac{1}{\Gamma(\varrho)}\int^\infty_0 \mu^{\varrho-1}s(t,\mu)d\mu & = \frac{1}{\Gamma(\varrho)}\int^\infty_0 \mu^{\varrho-1}\int^\infty_0 e^{-\tau\mu}d_\tau(-w(t,\tau))d\mu\\
& =\frac{1}{\Gamma(\varrho)}\int^\infty_0 \int^\infty_0 \mu^{\varrho-1}e^{-\tau\mu}d\mu d_\tau(-w(t,\tau))\\
& =\frac{1}{\Gamma(\varrho)}\int^\infty_0 \tau^{-\varrho}\int^\infty_0 (\tau\mu)^{\varrho-1}e^{-\tau\mu}d(\tau\mu) d_\tau(-w(t,\tau))\\
& =\int^\infty_0\tau^{-\varrho}d_\tau(-w(t,\tau)).
 \end{split}
\end{align}
It then follows from \eqref{1}, \eqref{2} and \eqref{(21)} that
\begin{align}\label{12}
\begin{split}
\|S_{\alpha}(t)u_{0}\|_{L^{p}}&\leqslant C\int^\infty_0 \mu^{\varrho-1}s(t,\mu)d\mu\|u_0\|_{L^r}\\
&\leqslant C\int^\infty_0 \frac{\mu^{\varrho-1}}{1+\mu\langle t \rangle}d\mu\|u_0\|_{L^r}\\
&= C\langle t \rangle^{-\varrho}\int^\infty_0\frac{\left(\mu\langle t \rangle\right)^{\varrho-1}}{1+\mu\langle t \rangle}d\left(\mu\langle t \rangle\right)\|u_0\|_{L^r}\\
&\leqslant C\langle t \rangle^{-\varrho}\|u_0\|_{L^r},
\end{split}
\end{align}
which completes the proof of this lemma.
\end{proof}
\end{Lem}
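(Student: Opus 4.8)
The plan is to combine the subordination representation \eqref{010} of $S_{\alpha}(t)$ with the weighted smoothing estimate for the heat semigroup and the pointwise bound \eqref{(21)} on the relaxation function. First I would invoke \eqref{010} to write $S_{\alpha}(t)|\cdot|^{\sigma}u_{0}=\int_{0}^{\infty}e^{\tau\Delta}|\cdot|^{\sigma}u_{0}\,d_\tau(-w(t,\tau))$ and pass the $L^{p}$-norm inside the Stieltjes integral against the nonincreasing, right-continuous measure $d_\tau(-w(t,\tau))$. Since the standing assumptions force $-2<\sigma\leqslant0$ (because $\gamma\leqslant0$ and $\sigma+2(\gamma+1)>0$ give $\sigma>-2(\gamma+1)\geqslant-2$), equivalently $0\leqslant-\sigma<2$, the weighted $L^{r}$-$L^{p}$ smoothing estimate for $e^{\tau\Delta}$ as in \cite[Proposition 2.1]{ST2017} yields $\|e^{\tau\Delta}|\cdot|^{\sigma}u_{0}\|_{L^{p}}\leqslant C\tau^{-\varrho}\|u_{0}\|_{L^{r}}$, where $\varrho:=\frac{N}{2}(\frac{1}{r}-\frac{1}{p})-\frac{\sigma}{2}$. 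Thus the whole problem reduces to estimating the scalar quantity $\int_{0}^{\infty}\tau^{-\varrho}d_\tau(-w(t,\tau))$; the hypothesis $-\frac{N}{2}(\frac{1}{r}-\frac{1}{p})+\frac{\sigma}{2}>-1$ is precisely $\varrho<1$, and together with $r<p$ and $\sigma\leqslant0$ this places $\varrho$ in the open interval $(0,1)$.

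The heart of the argument is to convert this Stieltjes integral, which I cannot estimate directly, into a Mellin-type average of the relaxation function $s(t,\mu)$, for which \eqref{(21)} already supplies a clean decay bound. I would start from the subordination formula \eqref{w}, namely $s(t,\mu)=\int_{0}^{\infty}e^{-\tau\mu}d_\tau(-w(t,\tau))$, multiply by $\mu^{\varrho-1}$ and integrate in $\mu$. Using Fubini (legitimate because the integrand is nonnegative with respect to the positive measures involved) together with the elementary identity $\int_{0}^{\infty}\mu^{\varrho-1}e^{-\tau\mu}\,d\mu=\Gamma(\varrho)\tau^{-\varrho}$, I obtain the key identity
\begin{align*}
\frac{1}{\Gamma(\varrho)}\int_{0}^{\infty}\mu^{\varrho-1}s(t,\mu)\,d\mu=\int_{0}^{\infty}\tau^{-\varrho}d_\tau(-w(t,\tau)).
\end{align*}
This is exactly the step that transfers the decay information encoded in the propagation function $w(t,\tau)$ to the more tractable relaxation function.

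Finally I would insert the bound \eqref{(21)}, $s(t,\mu)\leqslant C/(1+\mu\langle t\rangle)$, and compute, after the substitution $\lambda=\mu\langle t\rangle$,
\begin{align*}
\int_{0}^{\infty}\mu^{\varrho-1}s(t,\mu)\,d\mu\leqslant C\int_{0}^{\infty}\frac{\mu^{\varrho-1}}{1+\mu\langle t\rangle}\,d\mu=C\langle t\rangle^{-\varrho}\int_{0}^{\infty}\frac{\lambda^{\varrho-1}}{1+\lambda}\,d\lambda.
\end{align*}
The remaining integral equals the Beta value ${\rm B}(\varrho,1-\varrho)=\pi/\sin(\pi\varrho)$, which is finite exactly because $\varrho\in(0,1)$, and this produces the decay factor $\langle t\rangle^{-\varrho}=\langle t\rangle^{-\frac{N}{2}(\frac{1}{r}-\frac{1}{p})+\frac{\sigma}{2}}$, completing the estimate. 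I expect the main obstacle to lie in rigorously justifying the interchange of integration in the Mellin identity and the passage of the $L^{p}$-norm through the Stieltjes integral; both rest on the measurability, monotonicity and boundary behavior of $w(t,\cdot)$ recorded after \eqref{w}, and on the strict two-sided control $\varrho\in(0,1)$ that guarantees every integral in sight converges.
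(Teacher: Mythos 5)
Your proposal is correct and follows essentially the same route as the paper's own proof: subordination via \eqref{010} plus the Hardy--H\'{e}non heat-semigroup estimate of \cite[Proposition 2.1]{ST2017}, the Mellin/Fubini identity converting $\int_{0}^{\infty}\tau^{-\varrho}d_\tau(-w(t,\tau))$ into $\frac{1}{\Gamma(\varrho)}\int_{0}^{\infty}\mu^{\varrho-1}s(t,\mu)\,d\mu$, and finally the relaxation-function bound \eqref{(21)} with the substitution $\mu\langle t\rangle$. The only differences are cosmetic elaborations (the explicit derivation of $-2<\sigma\leqslant0$ from the standing assumptions and the identification of the last integral as ${\rm B}(\varrho,1-\varrho)$), which match the paper's argument in substance.
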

\begin{Rem}
Here we mention that the decay behaviour \eqref{(7)} is different from the heat equation case, in particular there occurs the phenomenon of the critical dimension (see \cite{KS2016}). For instance, let $1<r,p<\infty$, $1<\tau<\frac{N}{N-2}$, $1+\frac{1}{p}=\frac{1}{r}+\frac{1}{\tau}$ and $u_{0}\in L^{r}$.
Then
\begin{align*}
\|S_{\alpha}(t)u_{0}\|_{L^{p}}\leqslant C\langle t \rangle^{-\frac{N}{2}(1-\frac{1}{\tau})}\|u_{0}\|_{L^{r}}, \ t>0.
\end{align*}
Let $1<p<\infty$ and $u_{0}\in L^{1}\cap L^{p}$. It follows from the last inequality with $r=1$ that
\begin{align*}
\|S_{\alpha}(t)u_{0}\|_{L^{p}}\leqslant C\langle t \rangle^{-\frac{N}{2}(1-\frac{1}{p})}, \ t>0, \ \text{if} \ N<\frac{2p}{p-1}.
\end{align*}
We also mention that time-decay estimate \eqref{(7)} coincides with the case $k=0$ and the case $\alpha=1$ (see \cite{CY2009}) below the critical dimension.
\end{Rem}
In the following lemma, we study the solution operator on the space $\mathcal{C}_{q(p,r)}(I;L^{p})$.
\begin{Lem}\label{lemma.5}
Let $I=[0,\infty)$ and $u_{0}\in L^{r}$. Then $S_{\alpha}(t)u_{0}\in \mathcal{C}_{q(p,r)}(I;L^{p})\cap C_{b}(I,L^{r})$ and
\begin{align*}
\|S_{\alpha}(t)u_{0}\|_{\mathcal{C}_{q(p,r)}(I;L^{p})}\leqslant C\|u_{0}\|_{L^{r}}.
\end{align*}
\begin{proof}
The estimate is an immediate consequence of \eqref{(7)}.

On the other hand, we consider the function $\mathcal{Y}_{n}$ given by
\begin{equation*}
\mathcal{Y}_{n}(x)=\left\{
                  \begin{array}{ll}
                    0, & \text{if} \ x\in\{x:|x|<n\}\cap\{x:|u_{0}(x)|<n\}, \\
                    1, & \text{otherwise}.
                  \end{array}
                \right.
\end{equation*}
For fixed $\bar{p}$ such that $r<\bar{p}<p$, we use Lemma \ref{lemma.4} and have
\begin{align*}
\langle t \rangle^{\frac{1}{q}}\|S_{\alpha}(t)u_{0}\|_{L^{p}}
&\leqslant \langle t \rangle^{\frac{1}{q}}\|S_{\alpha}(t)\mathcal{Y}_{n}u_{0}\|_{L^{p}}+\langle t \rangle^{\frac{1}{q}}\|S_{\alpha}(t)(1-\mathcal{Y}_{n})u_{0}\|_{L^{p}}\\
&\leqslant C\|\mathcal{Y}_{n}u_{0}\|_{L^{r}}+C\langle t \rangle^{\frac{N}{2}(\frac{1}{r}-\frac{1}{\bar{p}})}\|(1-\mathcal{Y}_{n})u_{0}\|_{L^{\bar{p}}}.
\end{align*}
Since $u_{0}\in L^{r}$, for any $\epsilon>0$, we can make $n$ large enough such that
\begin{align*}
\|\mathcal{Y}_{n}u_{0}\|_{L^{r}}<\frac{\epsilon}{2}.
\end{align*}
Then, for a particular choice of $n$, there exists $t_{0}=t_{0}(n)$ sufficiently small such that
\begin{align*}
n\langle t \rangle^{\frac{N}{2}(\frac{1}{r}-\frac{1}{\bar{p}})}\|1-\mathcal{Y}_{n}\|_{L^{\bar{p}}}<\frac{\epsilon}{2}, \ \text{for} \ t<t_{0}.
\end{align*}
Therefore, for $p>r$, we have
\begin{align*}
\lim_{t\rightarrow0}\langle t \rangle^{\frac{1}{q}}\|S_{\alpha}(t)u_{0}\|_{L^{p}}=0.
\end{align*}
This shows the continuity at $t=0$ of $\langle t \rangle^{\frac{1}{q}}S_{\alpha}(t)u_{0}$ and completes the proof.
\end{proof}
\end{Lem}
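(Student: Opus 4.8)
The plan is to reduce everything to the decay estimate of Lemma~\ref{lemma.4} with $\sigma=0$, which for an admissible triplet matches the time weight exactly. Setting $\sigma=0$ in \eqref{(7)} gives
\begin{align*}
\|S_{\alpha}(t)u_{0}\|_{L^{p}}\leqslant C\langle t\rangle^{-\frac{N}{2}(\frac{1}{r}-\frac{1}{p})}\|u_{0}\|_{L^{r}}=C\langle t\rangle^{-\frac{1}{q}}\|u_{0}\|_{L^{r}},
\end{align*}
where the last equality uses the defining relation \eqref{qq} of $q$; the hypothesis $-\frac{N}{2}(\frac{1}{r}-\frac{1}{p})>-1$ of Lemma~\ref{lemma.4} is exactly $1/q<1$, guaranteed by admissibility. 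Multiplying by $\langle t\rangle^{1/q}$ yields the claimed bound $\|S_{\alpha}(t)u_{0}\|_{\mathcal{C}_{q(p,r)}(I;L^{p})}\leqslant C\|u_{0}\|_{L^{r}}$ at once. Membership in $C_{b}(I;L^{r})$ is the special case $p=r$ (so that $1/q=0$), while strong continuity of $t\mapsto S_{\alpha}(t)u_{0}$ on $(0,\infty)$ together with continuity at $t=0$ follows from Lemma~\ref{P1}(i) and the continuity statement in Lemma~\ref{lem01}.

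The only genuinely delicate point is the vanishing condition $\lim_{t\to0}\langle t\rangle^{1/q}\|S_{\alpha}(t)u_{0}\|_{L^{p}}=0$ built into the definition of $\mathcal{C}_{q(p,r)}(I;L^{p})$. For $p>r$ the estimate above only yields boundedness of this quantity, not decay, because the singular factor $\langle t\rangle^{-1/q}$ is precisely cancelled by the weight. Thus a direct application of \eqref{(7)} is insufficient, and the hard part is to recover an extra positive power of $\langle t\rangle$.

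I would obtain this by a truncation argument exploiting density. Split $u_{0}=\mathcal{Y}_{n}u_{0}+(1-\mathcal{Y}_{n})u_{0}$, where $\mathcal{Y}_{n}$ is the indicator of the region where $|x|\geqslant n$ or $|u_{0}|\geqslant n$; then $(1-\mathcal{Y}_{n})u_{0}$ is bounded by $n$ and supported in a fixed ball, hence lies in every $L^{\bar p}$. Applying \eqref{(7)} to the first piece with the exponents $(r,p)$ keeps the weight bounded and leaves the factor $\|\mathcal{Y}_{n}u_{0}\|_{L^{r}}$, which is made arbitrarily small by taking $n$ large since $u_{0}\in L^{r}$. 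Applying \eqref{(7)} to the second piece with an intermediate exponent $\bar p\in(r,p)$ produces, after multiplication by $\langle t\rangle^{1/q}$, the factor $\langle t\rangle^{\frac{N}{2}(\frac{1}{r}-\frac{1}{\bar p})}$; since $\bar p>r$ this exponent is strictly positive, so the term tends to $0$ as $t\to0$ for each fixed $n$. Choosing first $n$ and then $t$ small thus forces $\langle t\rangle^{1/q}\|S_{\alpha}(t)u_{0}\|_{L^{p}}$ below any prescribed $\epsilon$, giving the desired limit. The main obstacle is precisely arranging this two-parameter splitting so that the two error contributions are controlled independently; once the bounded, compactly supported piece is isolated, the gained smoothing exponent $\frac{N}{2}(\frac{1}{r}-\frac{1}{\bar p})>0$ does the rest.
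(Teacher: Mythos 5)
Your proposal is correct and follows essentially the same route as the paper: the uniform bound comes directly from \eqref{(7)} with the admissibility relation \eqref{qq}, and the vanishing condition at $t=0$ is handled by exactly the paper's truncation $u_{0}=\mathcal{Y}_{n}u_{0}+(1-\mathcal{Y}_{n})u_{0}$, applying \eqref{(7)} with exponents $(r,p)$ to the tail piece and with an intermediate $\bar{p}\in(r,p)$ to the bounded, compactly supported piece so as to gain the positive power $\langle t\rangle^{\frac{N}{2}(\frac{1}{r}-\frac{1}{\bar{p}})}$. The only cosmetic difference is that you justify the $C_{b}(I;L^{r})$ membership by formally taking $p=r$ in Lemma \ref{lemma.4} (which strictly requires $r<p$); this should instead be attributed to the bound $\|S_{\alpha}(t)v\|_{L^{r}}\leqslant C\|v\|_{L^{r}}$ and strong continuity from Lemma \ref{P1}, as you in fact invoke in the same sentence.
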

\begin{Lem}\label{lemma.7}
Let $r$ satisfy \eqref{rc} and $\max\{r,\rho\}<p<r\rho$. Then for any $0\leqslant t_0< t$, we have
\begin{align*}
&\int^{t}_{t_0}\langle t-s \rangle^{-\frac{N(\rho-1)}{2p}+\frac{\sigma}{2}}s^{\gamma}\langle s \rangle^{-\frac{\rho}{q}}ds\\
\leqslant &\langle t \rangle^{-\frac{N(\rho-1)}{2p}+\frac{\sigma}{2}+\gamma-\frac{\rho}{q}+1}\int^{1}_{t_{0}/ t}(1-s)^{-\frac{N(\rho-1)}{2p}+\frac{\sigma}{2}}s^{\gamma-\frac{\rho}{q}}ds.
\end{align*}
\begin{proof}
From the monotonicity of $1+kt^{-\alpha}$, then we deduce that
\begin{align*}
&\quad\int^{t}_{t_{0}}\langle t-s \rangle^{-\frac{N(\rho-1)}{2p}+\frac{\sigma}{2}}s^{\gamma}\langle s \rangle^{-\frac{\rho}{q}}ds\\
&\leqslant (1+kt^{-\alpha})^{-\frac{N(\rho-1)}{2p}+\frac{\sigma}{2}-\frac{\rho}{q}}\int^{t}_{t_{0}} (t-s)^{-\frac{N(\rho-1)}{2p}+\frac{\sigma}{2}}s^{\gamma-\frac{\rho}{q}}ds.
\end{align*}
From $1+kt^{-\alpha}>1$ and $\gamma+1>0$, we have
\begin{align*}
&\quad\int^{t}_{t_{0}}\langle t-s \rangle^{-\frac{N(\rho-1)}{2p}+\frac{\sigma}{2}}s^{\gamma}\langle s \rangle^{-\frac{\rho}{q}}ds\\
&\leqslant \langle t \rangle^{-\frac{N(\rho-1)}{2p}+\frac{\sigma}{2}+\gamma-\frac{\rho}{q}+1}
\int^{1}_{t_{0}/t}(1-s)^{-\frac{N(\rho-1)}{2p}+\frac{\sigma}{2}}s^{\gamma-\frac{\rho}{q}}ds,
\end{align*}
and the proof is finished.
\end{proof}
\end{Lem}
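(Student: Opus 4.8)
The plan is to exploit the factorization $\langle t\rangle=t(1+kt^{-\alpha})$, which splits each weight $\langle\cdot\rangle$ raised to a power into a pure power of its argument times a correction factor involving $1+k(\cdot)^{-\alpha}$. Before touching the integral I would record the two sign facts that drive everything. Since $\sigma\leqslant0$ and $\rho>1$, the exponent $\beta:=-\frac{N(\rho-1)}{2p}+\frac{\sigma}{2}$ is strictly negative, and likewise $-\frac{\rho}{q}<0$. Moreover the standing hypothesis $\sigma+2(\gamma+1)>0$ together with $\sigma\leqslant0$ forces $2(\gamma+1)>-\sigma\geqslant0$, hence $\gamma+1>0$. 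These are the only structural inputs required, and the conditions \eqref{rc} and $\max\{r,\rho\}<p<r\rho$ serve only to make the resulting beta-type integral finite for later use rather than to prove the inequality itself.

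For the first step I would write $\langle t-s\rangle=(t-s)(1+k(t-s)^{-\alpha})$ and $\langle s\rangle=s(1+ks^{-\alpha})$ inside the integrand. Because $t\mapsto t^{-\alpha}$ is decreasing and $0\leqslant t_0<s<t$, one has $1+k(t-s)^{-\alpha}\geqslant1+kt^{-\alpha}$ and $1+ks^{-\alpha}\geqslant1+kt^{-\alpha}$. Raising these to the nonpositive powers $\beta$ and $-\frac{\rho}{q}$ reverses the inequalities, so both correction factors are dominated by their values at $s=t$; pulling the resulting constant $(1+kt^{-\alpha})^{\beta-\rho/q}$ out of the integral leaves the pure-power kernel $\int_{t_0}^t(t-s)^{\beta}s^{\gamma-\rho/q}\,ds$, which is precisely the first displayed bound.

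The second step rescales $s=t\tau$, converting the power integral into
\[
t^{\beta+\gamma-\frac{\rho}{q}+1}\int_{t_0/t}^1(1-\tau)^{\beta}\tau^{\gamma-\frac{\rho}{q}}\,d\tau,
\]
and then reassembles the prefactor back into $\langle t\rangle$. Here the sign facts return: since $1+kt^{-\alpha}>1$ and $\gamma+1>0$, raising the same base to the larger exponent only increases it, so $(1+kt^{-\alpha})^{\beta-\rho/q}\leqslant(1+kt^{-\alpha})^{\beta+\gamma-\rho/q+1}$. Multiplying this by $t^{\beta+\gamma-\rho/q+1}$ recombines the full prefactor into $\langle t\rangle^{\beta+\gamma-\rho/q+1}$, yielding the claimed right-hand side.

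I do not anticipate a genuine obstacle: the lemma is an elementary comparison estimate, and the one thing that can go wrong is the orientation of the monotonicity inequalities. The crux is therefore the sign bookkeeping — confirming $\beta\leqslant0$ and $\gamma+1>0$ so that every use of ``$a\leqslant b\Rightarrow a^{c}\geqslant b^{c}$ for $c<0$'' and ``$a^{c}\leqslant a^{d}$ for $a>1,\ c\leqslant d$'' points the correct way. Once those signs are pinned down, each step reduces to a one-line estimate and the change of variables is routine.
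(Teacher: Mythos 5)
Your proof is correct and takes essentially the same route as the paper: factor $\langle \cdot\rangle=(\cdot)\,(1+k(\cdot)^{-\alpha})$, use the monotonicity of $t\mapsto 1+kt^{-\alpha}$ together with the nonpositive exponents to pull out $(1+kt^{-\alpha})^{-\frac{N(\rho-1)}{2p}+\frac{\sigma}{2}-\frac{\rho}{q}}$, rescale $s=t\tau$, and then use $1+kt^{-\alpha}>1$ with $\gamma+1>0$ to absorb the prefactor into $\langle t\rangle^{-\frac{N(\rho-1)}{2p}+\frac{\sigma}{2}+\gamma-\frac{\rho}{q}+1}$. Your explicit sign bookkeeping (that $-\frac{N(\rho-1)}{2p}+\frac{\sigma}{2}<0$, $-\frac{\rho}{q}<0$, and $\gamma+1>0$ follows from $\sigma+2(\gamma+1)>0$ with $\sigma\leqslant 0$) merely makes explicit what the paper leaves implicit.
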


\section{Global and local well-posedness}
In this section, we give the global and local existence of mild solutions for the Cauchy problem \eqref{(2)}. Also, the corresponding coupled Rayleigh-Stokes system \eqref{(6)} is considered in the end.
\subsection{Global and local existence of the problem}
First, in view of \eqref{91}, we give the definition of the mild solution for the problem \eqref{(2)} as follows.
\begin{Def}\label{def1}
Let $u_0\in L^r $ and $T>0$. We call $u$ a mild solution of the problem {\rm\eqref{(2)}}, if $u\in C([0,T];L^{r})$ and satisfies
\begin{align}\label{(3)}
u(x,t)=S_{\alpha}(t)u_{0}(x)+\int^{t}_{0}S_{\alpha}(t-s)|x|^\sigma s^\gamma u^{\rho}(x,s)ds, \  t\in[0,T].
\end{align}
\end{Def}
Define
\begin{align*}
X=\big\{u\in C_{b}(I;L^{r})\cap\mathcal{C}_{q(p,r)}(I;L^{p})~\big|~ \|u\|_X<\infty\big\},
\end{align*}
with the norm 
\begin{align*}
\|u\|_{X}=\sup_{t\in I}\|u(\cdot,t)\|_{L^{r}}+\sup_{t\in I}{\langle t \rangle^{\frac{1}{q}}\|u(\cdot,t)\|_{L^{p}}}.
\end{align*}

Then for the problem \eqref{(2)}, we prove the global existence given in Theorem \ref{th.4} by the contraction mapping principle as follows.
\begin{proof}[\bf{Proof of Theorem \ref{th.4}}]
Let $I=[0,\infty)$.
By the assumption $\max\{r,\rho\}<p<r\rho$, we can set $B_1:={\rm{B}}(1-\frac{N(\rho-1)}{2p}+\frac{\sigma}{2},1+\gamma-\frac{\rho}{q})$. Define
\begin{align}\label{(25)}
B_{R}(X)=\{u\in X:\|u\|_{X} \leqslant R\},
\end{align}
where $R<(2CB_1)^{\frac{1}{1-\rho}}$. Then define $\Lambda:B_{R}(X)\rightarrow B_{R}(X)$ by
\begin{align}\label{T}
\Lambda u(x,t)=S_{\alpha}(t)u_{0}(x)+\mathcal{G}u(x,t),
\end{align}
where $\mathcal{G}u$ is given by
\begin{align}\label{g}
\mathcal{G}u(x,t)=\int^{t}_{0}S_{\alpha}(t-s)|x|^\sigma s^\gamma u^{\rho}(x,s)ds.
\end{align}
In the following, we divide the proof in five steps.\\
\noindent{\bf{Step 1.}} We show that the operator $\Lambda$ is well defined.

First, we prove that $\Lambda u\in \mathcal{C}_{q(p,r)}(I;L^{p})$ for $u\in X$. For $0<t_{0}<t$, we have
\begin{align}\label{i}
\begin{split}
&\|\Lambda u(\cdot,t)-\Lambda u(\cdot,t_{0})\|_{L^{p}}\\
\leqslant&\|(S_{\alpha}(t)-S_{\alpha}(t_{0}))u_{0}\|_{L^{p}}+\int^{t}_{t_{0}}\|S_{\alpha}(t-s)|x|^\sigma s^\gamma u^{\rho}(\cdot,s)\|_{L^{p}}ds\\
&+\int^{t_{0}}_{0}\|(S_{\alpha}(t-s)-S_{\alpha}(t_{0}-s))|x|^\sigma s^\gamma u^{\rho}(x,s)\|_{L^{p}}ds.
\end{split}
\end{align}
The convergence of the first term as $t\rightarrow t_{0}$ above follows from the strong continuity of the family $(S_{\alpha}(t))_{t\geqslant
0}$ in Lemma \ref{P1}-{\rm{(i)}}.

For the second term in \eqref{i}, by Lemma \ref{lemma.4}, Lemma \ref{lemma.7}, we have
\begin{align}\label{i1}
\begin{split}
&\int^{t}_{t_{0}}\|S_{\alpha}(t-s)|x|^\sigma s^\gamma u^{\rho}(\cdot,s)\|_{L^{p}}ds\\
\leqslant &C\int^{t}_{t_{0}}\langle t-s \rangle^{-\frac{N}{2}(\frac{\rho}{p}-\frac{1}{p})+\frac{\sigma}{2}}s^\gamma \|u^{\rho}(\cdot,s)\|_{L^{\frac{p}{\rho}}}ds\\
\leqslant &C\int^{t}_{t_{0}}\langle t-s \rangle^{-\frac{N(\rho-1)}{2p}+\frac{\sigma}{2}}s^\gamma\langle s \rangle^{-\frac{\rho}{q}} ds \|u\|^{\rho}_{X}\\
\leqslant &C\langle t \rangle^{-\frac{N(\rho-1)}{2p}+\frac{\sigma}{2}+\gamma-\frac{\rho}{q}+1}\int^{1}_{t_{0}/t}(1-s)^{-\frac{N(\rho-1)}{2p}+\frac{\sigma}{2}}s^{\gamma-\frac{\rho}{q}}ds\|u\|^{\rho}_{X}.
\end{split}
\end{align}
By the properties of the Beta function, we get
$$\int^{1}_{t_{0}/t}(1-s)^{-\frac{N(\rho-1)}{2p}+\frac{\sigma}{2}}s^{\gamma-\frac{\rho}{q}}ds\to0,~\text{as}~t\to t_0,$$
which implies that \eqref{i1} tends to $0$ as $t\rightarrow t_{0}$.

For the third term in \eqref{i}, since $\langle t-s \rangle^{-\frac{N(\rho-1)}{2p}+\frac{\sigma}{2}}\leqslant\langle t_0-s \rangle^{-\frac{N(\rho-1)}{2p}+\frac{\sigma}{2}}$, we have
\begin{align}\label{i3}
\begin{split}
&\int^{t_{0}}_{0}\|(S_{\alpha}(t-s)-S_{\alpha}(t_{0}-s))|x|^{\sigma}s^{\gamma}u^{\rho}(\cdot,s)\|_{L^{p}}ds\\
\leqslant &\int^{t_{0}}_{0}\|S_{\alpha}(t-s)|x|^\sigma s^\gamma u^{\rho}(\cdot,s)\|_{L^{p}}+\|S_{\alpha}(t_{0}-s)|x|^\sigma s^\gamma u^{\rho}(\cdot,s)\|_{L^{p}}ds\\
\leqslant &2C\int^{t_{0}}_{0}\langle t_0-s \rangle^{-\frac{N(\rho-1)}{2p}+\frac{\sigma}{2}}s^\gamma \langle s \rangle^{-\frac{\rho}{q}}ds\|u\|^{\rho}_{X}.
\end{split}
\end{align}
Now we can apply the Lebesgue dominated convergence theorem and strong continuity of the operator $S_{\alpha}(t)$ to imply that \eqref{i3} goes to $0$ as $t\to t_{0}$.
Therefore, it follows %from \eqref{g} and \eqref{i3} that
\begin{align*}
\|\Lambda u(\cdot,t)-\Lambda u(\cdot,t_{0})\|_{L^{p}}\rightarrow 0, \ as \ t \rightarrow t_{0}.
\end{align*}
Now we prove that
$$\lim_{t\to0}\langle t \rangle^{\frac{1}{q}}\|\Lambda u(\cdot,t)\|_{L^{p}}=0.$$
From Lemma \ref{lemma.4} and Lemma \ref{lemma.7}, we have
\begin{align*}
&\quad\|\mathcal{G} u(\cdot,t)\|_{L^{p}}\\
&\leqslant\int^{t}_{0}\|S_{\alpha}(t-s)|x|^{\sigma}s^{\gamma}u^{\rho}(\cdot,s)\|_{L^{p}}ds\\
&\leqslant C\int^{t}_{0}\langle t-s \rangle^{-\frac{N(\rho-1)}{2p}+\frac{\sigma}{2}}s^{\gamma}\|u^{\rho}(\cdot,s)\|_{L^{\frac{p}{\rho}}}ds\\
&\leqslant C\int^{t}_{0}\langle t-s \rangle^{-\frac{N(\rho-1)}{2p}+\frac{\sigma}{2}}s^{\gamma}\langle s \rangle^{-\frac{\rho}{q}}ds\sup_{s\in(0,t]}\{\langle s \rangle^{\frac{\rho}{q}}\|u(\cdot,s)\|^{\rho}_{L^{p}}\}\\
&\leqslant C\langle t \rangle^{-\frac{N(\rho-1)}{2p}+\frac{\sigma}{2}+\gamma-\frac{\rho}{q}+1}\int^{1}_{0}(1-s)^{-\frac{N(\rho-1)}{2p}+\frac{\sigma}{2}}s^{\gamma-\frac{\rho}{q}}ds\sup_{s\in(0,t]}\{\langle s \rangle^{\frac{\rho}{q}}\|u(\cdot,s)\|^{\rho}_{L^{p}}\}.
\end{align*}
Since $r=r_c$ is defined by \eqref{rrc} and $q$ satisfies \eqref{qq}, we get
\begin{equation}\label{q}
-\frac{N(\rho-1)}{2p}+\frac{\sigma}{2}+\gamma-\frac{\rho}{q}+1=-\frac{1}{q},
\end{equation}
it then follows that
$$\langle t \rangle^{\frac{1}{q}}\|\mathcal{G}u(\cdot,t)\|_{L^p}\leqslant C\int^{1}_{0}
(1-s)^{-\frac{N(\rho-1)}{2p}+\frac{\sigma}{2}}s^{\gamma-\frac{\rho}{q}}ds\sup_{s\in(0,t]}\{\langle s \rangle^{\frac{\rho}{q}}\|u(\cdot,s)\|^{\rho}_{L^{p}}\}.$$
Since Lemma \ref{lemma.5} and $u\in X$, we have
\begin{align*}
\lim_{t\to0}\langle t \rangle^{\frac{1}{q}}\|S_{\alpha}(t)u_{0}\|_{L^{p}}=0 \ \text{and} \ \lim_{t\to0}\langle t \rangle^{\frac{1}{q}}\|u(\cdot,t)\|_{L^{p}}=0.
\end{align*}
By the definition of $\Lambda$, we obtain
$$\lim_{t\rightarrow0}\langle t \rangle^{\frac{1}{q}}\|\Lambda u(\cdot,t)\|_{L^{p}}=0.$$
A similar argument enables us to give that $\Lambda u\in C_{b}(I;L^{r})$.

Next, we prove that $\Lambda$ is a self-mapping. If $u\in B_{R}(X)$, by Lemma \ref{lemma.4} we have
\begin{align*}
\|\Lambda u(\cdot,t)\|_{L^{p}} &\leqslant \|S_{\alpha}(t)u_{0}\|_{L^{p}}+\int^{t}_{0}\|S_{\alpha}(t-s)|x|^\sigma s^\gamma u^{\rho}(\cdot,s)\|_{L^{p}}ds\\
&\leqslant C\langle t \rangle^{-\frac{1}{q}}\|u_{0}\|_{L^{r}}+C\int^{t}_{0}\langle t-s \rangle^{-\frac{N}{2}(\frac{\rho}{p}-\frac{1}{p})+\frac{\sigma}{2}}s^\gamma\|u^{\rho}(\cdot,s)\|_{L^{\frac{p}{\rho}}}ds\\
&\leqslant C\langle t \rangle^{-\frac{1}{q}}\|u_{0}\|_{L^{r}}+C\int^{t}_{0}\langle t-s \rangle^{-\frac{N(\rho-1)}{2p}+\frac{\sigma}{2}}s^\gamma\langle s \rangle^{-\frac{\rho}{q}}ds\|u\|^{\rho}_{X}\\
&\leqslant C\langle t \rangle^{-\frac{1}{q}}\|u_{0}\|_{L^{r}}+C\langle t \rangle^{-\frac{1}{q}}\int^{1}_{0}(1-s)^{-\frac{N(\rho-1)}{2p}+\frac{\sigma}{2}}s^{\gamma-\frac{\rho}{q}}ds\|u\|^{\rho}_{X},
\end{align*}
and the last inequality is provided by \eqref{q}. Similarly,
\begin{align*}
\|\Lambda u(\cdot,t)\|_{L^{r}} &\leqslant \|S_{\alpha}(t)u_{0}\|_{L^{r}}+\int^{t}_{0}\|S_{\alpha}(t-s)|x|^\sigma s^\gamma u^{\rho}(\cdot,s)\|_{L^{r}}ds\\
&\leqslant C\|u_{0}\|_{L^{r}}+C\int^{t}_{0}\langle t-s \rangle^{-\frac{N}{2}(\frac{\rho}{p}-\frac{1}{r})+\frac{\sigma}{2}}s^\gamma\|u(\cdot,s)\|^{\rho}_{L^{p}}ds\\
&\leqslant C\|u_{0}\|_{L^{r}}+C\int^{1}_{0}(1-s)^{-\frac{N}{2}(\frac{\rho}{p}-\frac{1}{r})+\frac{\sigma}{2}}s^{\gamma-\frac{\rho}{q}}ds\|u\|^{\rho}_{X}.
\end{align*}
From the choice of $R$, we have
\begin{align*}
\|\Lambda u\|_{X} \leqslant C(\|u_{0}\|_{L^{r}}+B_{1}\|u\|^{\rho}_{X}) \leqslant R,
\end{align*}
for $\|u_{0}\|_{L^{r}}\leqslant R(1/C-B_{1}R^{\rho-1})$. Hence $\Lambda$ is a self-mapping.

\noindent{\bf{Step 2.}} We show that the operator $\Lambda$ is a contraction.
Arguing as in Step 1, if $u,v\in B_{R}(X)$, then we have
\begin{align*}
&\|\Lambda u(\cdot,t)-\Lambda v(\cdot,t)\|_{L^{p}}\\
\leqslant& C\int^{t}_{0}\langle t-s \rangle^{-\frac{N}{2}(\frac{\rho}{p}-\frac{1}{p})+\frac{\sigma}{2}}s^\gamma\|u^{\rho}(\cdot,s)-v^{\rho}(\cdot,s)\|_{L^{\frac{p}{\rho}}}ds\\
\leqslant& C\int^{t}_{0}\langle t-s \rangle^{-\frac{N(\rho-1)}{2p}+\frac{\sigma}{2}}s^\gamma\big(\|u(\cdot,s)\|^{\rho-1}_{L^{p}}+\|v(\cdot,s)\|^{\rho-1}_{L^{p}}\big)\|u(\cdot,s)-v(\cdot,s)\|_{L^{p}}ds\\
\leqslant& C\int^{t}_{0}\langle t-s \rangle^{-\frac{N(\rho-1)}{2p}+\frac{\sigma}{2}}s^\gamma\langle s \rangle^{-\frac{\rho}{q}}ds\big(\|u\|^{\rho-1}_{X}+\|v\|^{\rho-1}_{X}\big)\|u-v\|_{X}\\
\leqslant& 2CB_{1}\langle t \rangle^{-\frac{1}{q}}R^{\rho-1}\|u-v\|_{X},
\end{align*}
and
\begin{align*}
\|\Lambda u(\cdot,t)-\Lambda v(\cdot,t)\|_{L^{r}}\leqslant 2CB_{1}R^{\rho-1}\|u-v\|_{X}.
\end{align*}
From this and the choice of $R$, we obtain that $\Lambda$ is a contraction and has a unique fixed point in $B_{R}(X)$, which is the global mild solution of \eqref{(2)}.

\noindent{\bf{Step 3.}}  We discuss the continuous dependence of the solution.
Let $u$ and $v$ be mild solutions of the problem \eqref{(2)} starting in $u_{0}$ and $v_{0}$, respectively.
Performing as in Step 2, one gets
\begin{align*}
\|u-v\|_{X}\leqslant \|u_{0}-v_{0}\|_{L^{r}}+2CB_1R^{\rho-1}\|u-v\|_{X},
\end{align*}
whence
\begin{align*}
\|u-v\|_{X}\leqslant\frac{1}{1-2CB_1R^{\rho-1}}\|u_{0}-v_{0}\|_{L^{r}}.
\end{align*}

\noindent{\bf{Step 4.}} We obtain the positivity of the solution. Assume that $u_{0}$ is nonnegative. Therefore, $e^{t\Delta}u_{0}\geqslant
0$, for all $t>0$. Then, we recall that the Borel measure $w(t,\cdot)$ is nonincreasing and use \eqref{010} to infer that $S_{\alpha}(t)u_{0}\geqslant0$. The nonnegativity of
the solution follows now from the fact that the Picard's sequence starting at $0$, which converges to
the solution $u$, is a nonnegative sequence.

\noindent{\bf{Step 5.}}
Consider now the case $p\leqslant \rho$. By interpolation between $C_{b}(I;L^{r})$ and any space $\mathcal{C}_{\bar{q}(\bar{p},r)}(I;L^{\bar{p}})$ with $\max\{r,\rho\} < \bar{p} < r\rho$, the solution $u$ satisfies that $u\in\mathcal{C}_{q(p,r)}(I;L^{p})$.

To complete the proof of Theorem \ref{th.4}, it suffices to show that $u\in\mathcal{C}_{q(p,r)}(I;L^{p})$ for any admissible triplet $(q,p,r)$ with $p\geqslant r\rho$. To this end, let $\bar{p}=r\rho-\epsilon$, where $\epsilon>0$ is so small that
\begin{align*}
\frac{N}{2}\left(\frac{\rho}{\bar{p}}-\frac{1}{p}\right)<1.
\end{align*}
This is guaranteed by the fact that $p<\frac{Nr}{(N-2r)_{+}}$. Let $\frac{1}{\bar{q}}=\frac{N}{2}(\frac{1}{r}-\frac{1}{\bar{p}})$. Then $(\bar{q},\bar{p},r)$ is an admissible triplet with $\max\{r,\rho\} < \bar{p} < r\rho$ and a simple calculation yields that

\begin{align*}
\|\mathcal{G}u\|_{\mathcal{C}_{q(p,r)}(I;L^{p})}&\leqslant C\sup_{t\in I}\langle t\rangle^{\frac{1}{q}}\int^{t}_{0}\langle t-s\rangle^{-\frac{N}{2}(\frac{\rho}{\bar{p}}-\frac{1}{p})+\frac{\sigma}{2}}s^{\gamma}\|u^{\rho}(\cdot,s)\|_{L^{\frac{\bar{p}}{\rho}}}ds\\
&\leqslant C\sup_{t\in I}\langle t\rangle^{\frac{1}{q}}\int^{t}_{0}\langle t-s\rangle^{-\frac{N}{2}(\frac{\rho}{\bar{p}}-\frac{1}{p})+\frac{\sigma}{2}}s^{\gamma}\langle s\rangle^{-\frac{\rho}{\bar{q}}}ds\|u\|^{\rho}_{\mathcal{C}_{\bar{q}}(I;L^{\bar{p}})}\\
&\leqslant C\sup_{t\in I}\langle t\rangle^{-\frac{N(\rho-1)}{2r}+\frac{\sigma}{2}+\gamma+1}\int^{1}_{0}(1-s)^{-\frac{N}{2}(\frac{\rho}{\bar{p}}-\frac{1}{p})+\frac{\sigma}{2}} s^{\gamma-\frac{\rho}{\bar{q}}}ds\|u\|^{\rho}_{\mathcal{C}_{\bar{q}(\bar{p},r)}(I;L^{\bar{p}})}\\
&\leqslant C\|u\|^{\rho}_{\mathcal{C}_{\bar{q}(\bar{p},r)}(I;L^{\bar{p}})}.
\end{align*}
The proof is completed.
\end{proof}

Next, we prove the local existence  of the mild solution for the problem \eqref{(2)} given in Theorem \ref{thm101} as follows.
\begin{proof}[\bf{Proof of Theorem \ref{thm101}}]
 Let $I=[0,T)$ for $T>0$. Then by Lemma \ref{lemma.4} we have
\begin{align*}
\|S_{\alpha}(t)u_{0}\|_{\mathcal{C}_{q(p,r)}(I;L^{p})}+\sup_{t\in I}\|S_{\alpha}(t)u_{0}\|_{L^{r}}\leqslant C\|u_{0}\|_{L^{r}}.
\end{align*}
Let $B_{R}(X)$ be given in \eqref{(25)} and $R=2C\|u_{0}\|_{L^{r}}$. If $u\in B_{R}(X)$, we use Lemma \ref{lemma.7} and have
\begin{align*}
\|\mathcal{G}u\|_{\mathcal{C}_{q(p,r)}(I;L^{p})}&=\sup_{t\in I}\langle t \rangle^{\frac{1}{q}}\|\mathcal {G}u(\cdot,t)\|_{L^{p}}\\
&\leqslant C\sup_{t\in I}\langle t \rangle^{\frac{1}{q}}\int^{t}_{0}\langle t-s \rangle^{-\frac{N(\rho-1)}{2p}+\frac{\sigma}{2}}s^\gamma\|u^{\rho}(\cdot,s)\|_{L^{\frac{p}{\rho}}}ds\\
&\leqslant C\sup_{t\in I}\langle t \rangle^{\frac{1}{q}}\int^{t}_{0}\langle t-s \rangle^{-\frac{N(\rho-1)}{2p}+\frac{\sigma}{2}}s^\gamma\langle s \rangle^{-\frac{\rho}{q}}ds\|u\|^{\rho}_{X}\\
&\leqslant C\sup_{t\in I}\langle t \rangle^{\frac{1}{q}-\frac{N(\rho-1)}{2p}+\frac{\sigma}{2}+\gamma-\frac{\rho}{q}+1}\int^{1}_{0}(1-s)^{-\frac{N(\rho-1)}{2p}+\frac{\sigma}{2}}s^{\gamma-\frac{\rho}{q}}ds\|u\|^{\rho}_{X}\\
&\leqslant CB_{1}\langle T \rangle^{-\frac{N(\rho-1)}{2r}+\frac{\sigma}{2}+\gamma+1}R^{\rho},
\end{align*}
and
\begin{align*}
\|\mathcal{G}u\|_{C_{b}(I;L^{r})}&\leqslant C\sup_{t\in I}\int^{t}_{0}\langle t-s \rangle^{-\frac{N}{2}(\frac{\rho}{p}-\frac{1}{r})+\frac{\sigma}{2}}s^\gamma\|u^{\rho}(\cdot,s)\|_{L^{\frac{p}{\rho}}}ds\\
&\leqslant C\sup_{t\in I}\langle t \rangle^{-\frac{N}{2}(\frac{\rho}{p}-\frac{1}{r})+\frac{\sigma}{2}+\gamma-\frac{\rho}{q}+1}
\int^{1}_{0}(1-s)^{-\frac{N}{2}(\frac{\rho}{p}-\frac{1}{r})+\frac{\sigma}{2}}s^{\gamma-\frac{\rho}{q}}ds\|u\|^{\rho}_{X}\\
&\leqslant CB_{1}\langle T \rangle^{-\frac{N(\rho-1)}{2r}+\frac{\sigma}{2}+\gamma+1}R^{\rho}.
\end{align*}
Combining these two estimates above, we have
\begin{align*}
\|\Lambda u\|_{X}\leqslant C\|u_{0}\|_{L^{r}}+CB_{1}\langle T \rangle^{-\frac{N(\rho-1)}{2r}+\frac{\sigma}{2}+\gamma+1}R^{\rho}.
\end{align*}
Similarly, for $u,v\in B_{R}(X)$, we can easily obtain that
\begin{align*}
\|\Lambda u-\Lambda v\|_{X}\leqslant CB_{1}\langle T \rangle^{-\frac{N(\rho-1)}{2r}+\frac{\sigma}{2}+\gamma+1}R^{\rho-1}\|u-v\|_{X}.
\end{align*}
From the above discussion, it is seen from $r>r_{c}=\frac{N(\rho-1)}{2(\gamma+1)+\sigma}$ that the operator $\Lambda$ is a contraction mapping from $B_{R}(X)$ to itself if $T$ is suitably small, e.g.
\begin{align*}
\langle T \rangle \leqslant (2^{\rho}C^{\rho}B_{1}\|u_{0}\|^{\rho-1}_{L^{r}})^{\frac{1}{\frac{N(\rho-1)}{2r}-\frac{\sigma}{2}-\gamma-1}}.
\end{align*}
Thus the Banach contraction mapping theorem implies that there exists a unique solution $u$ in $B_{R}(X)$ to problem {\rm\eqref{(2)}}. By the continuation and blow-up alternative argument (see \cite{HZ2022, N}), there is a maximal $T^{*}=T(\|u_{0}\|_{L^{r}})$ such that $u\in C([0,T^{*});L^{r})\cap\mathcal{C}_{q(p,r)}([0,T^{*});L^{p})$ and either $T^{*}=\infty$ or $T^{*}<\infty$ and
\begin{align*}
\lim_{t\rightarrow T^{*}}\|u(t)\|_{L^{r}}=\infty,
\end{align*}
which completes the proof of this theorem.
\end{proof}

\subsection{Global existence of the coupled system}
In this subsection, we prove Theorem \ref{th21} which establishes the small global well-posedness for the coupled Rayleigh-Stokes system \eqref{(6)}.
Define
$$X_1=\{u\in C_{b}(I;L^{r_1})\cap\mathcal{C}_{q_1(p_1,r_1)}(I;L^{p_1})~\big|~\|u\|_{X_1}<\infty\},$$
$$X_2=\{v\in C_{b}(I;L^{r_2})\cap\mathcal{C}_{q_2(p_2,r_2)}(I;L^{p_2})~\big|~\|v\|_{X_2}<\infty\},$$
with the norm
$$\|u\|_{X_1}=\sup_{t\in I}\|u(\cdot,t)\|_{L^{r_1}}+\sup_{t\in I}{\langle t \rangle^{\frac{1}{q_1}}\|u(\cdot,t)\|_{L^{p_1}}},$$
$$\|v\|_{X_2}=\sup_{t\in I}\|v(\cdot,t)\|_{L^{r_2}}+\sup_{t\in I}{\langle t \rangle^{\frac{1}{q_2}}\|v(\cdot,t)\|_{L^{p_2}}}.$$
Further, we define
\begin{align*}
Y=\big\{(u,v)\in X_1\times X_2~\big|~ \|(u,v)\|_Y=\max\{\|u\|_{X_1},\|v\|_{X_2}\}<\infty\big\}.
\end{align*}
%with the norm is given by
%\begin{align*}
%\|(u,v)\|_{Y}=.
%\end{align*}

%The corresponding metric space is given by
%$$\mathcal {Y}(I)=\big\{(u,v)\in Y:\|(u,v)\|_{Y}\leqslant R_2=2C\max\{\|u_0\|_{L^{r_1}},\|v_0\|_{L^{r_2}}\}\big\}.$$
\begin{proof}[\bf{Proof of Theorem \ref{th21}}]
We only show the global existence of mild solutions for the system \eqref{(6)}, the continuous dependence and the positivity of the solution $(u,v)$ can be obtained by the same argument as Theorem \ref{th.4}, here we omit it.

Due to the assumptions, we can set $B_2:={{\rm{B}}}\big(1-\frac{N}{2}\big(\frac{\rho_1}{p_2}-\frac{1}{p_1}\big)+\frac{\sigma}{2},1+\gamma-\frac{\rho_1}{q_2}\big)$ and define
\begin{align*}
B_{R}(Y)=\big\{(u,v)\in Y:\|(u,v)\|_{Y} \leqslant R\big\},
\end{align*}
where $R>0$ is to be determined later. For any $(u,v)\in B_{R}(Y)$, we define an operator $\Lambda(u,v)=(\Lambda_1(v),\Lambda_2(u)):B_{R}(Y)\rightarrow B_{R}(Y)$ by
\begin{align*}
&\Lambda_1v(x,t)=S_{\alpha}(t)u_0(x)+\int^{t}_{0}S_{\alpha}(t-s)|x|^\sigma s^\gamma v^{\rho_1}(x,s)ds
\end{align*}
and
\begin{align*}
\Lambda_2u(x,t)=S_{\alpha}(t)v_0(x)+\int^{t}_{0}S_{\alpha}(t-s)|x|^\sigma s^\gamma u^{\rho_2}(x,s)ds.
\end{align*}
In the following, we prove the small global well-posedness by the contraction-mapping principle.
First, we show that the operator $\Lambda$ is a self-mapping.

If $(u,v)\in B_{R}(Y)$, we use Lemma \ref{lemma.4}, Lemma \ref{lemma.7} and have
\begin{align*}
&\quad\|\Lambda_1 v(\cdot,t)\|_{L^{r_1}}\\
&\leqslant \|S_{\alpha}(t)u_{0}\|_{L^{r_1}}+\int^{t}_{0}\|S_{\alpha}(t-s)|x|^\sigma s^\gamma v^{\rho_1}(\cdot,s)\|_{L^{r_1}}ds\\
&\leqslant C\|u_{0}\|_{L^{r_1}}+C\int^{t}_{0}\langle t-s \rangle^{-\frac{N}{2}(\frac{\rho_1}{p_2}-\frac{1}{r_1})+\frac{\sigma}{2}}s^\gamma\|v^{\rho_1}(\cdot,s)\|_{L^{\frac{p_2}{\rho_1}}}ds\\
&\leqslant C\|u_{0}\|_{L^{r_1}}+C\int^{t}_{0}\langle t-s \rangle^{-\frac{N}{2}(\frac{\rho_1}{p_2}-\frac{1}{r_1})+\frac{\sigma}{2}}s^\gamma\langle s \rangle^{-\frac{\rho_1}{q_2}}ds\|v\|^{\rho_1}_{X_{2}}\\
&\leqslant  C\|u_{0}\|_{L^{r_1}}+C\langle t \rangle^{-\frac{N}{2}(\frac{\rho_1}{p_2}-\frac{1}{r_1})+\frac{\sigma}{2}+\gamma-\frac{\rho_1}{q_2}+1}
\int^{1}_{0}(1-s)^{-\frac{N}{2}(\frac{\rho_1}{p_2}-\frac{1}{r_1})+\frac{\sigma}{2}}s^{\gamma-\frac{\rho_1}{q_2}}ds\|v\|^{\rho_1}_{X_2},
\end{align*}
and
\begin{align*}
&\quad\langle t \rangle^{\frac{1}{q_1}}\|\Lambda_1 v(\cdot,t)\|_{L^{p_1}}\\
&\leqslant \langle t \rangle^{\frac{1}{q_1}}\|S_{\alpha}(t)u_{0}\|_{L^{p_1}}+\langle t \rangle^{\frac{1}{q_1}}\int^{t}_{0}\|S_{\alpha}(t-s)|x|^\sigma s^\gamma v^{\rho_1}(\cdot,s)\|_{L^{p_1}}ds\\
&\leqslant C\|u_{0}\|_{L^{r_1}}+C\langle t \rangle^{\frac{1}{q_1}}\int^{t}_{0}\langle t-s \rangle^{-\frac{N}{2}(\frac{\rho_1}{p_2}-\frac{1}{p_1})+\frac{\sigma}{2}}s^\gamma\|v^{\rho_1}(\cdot,s)\|_{L^{\frac{p_2}{\rho_1}}}ds\\
&\leqslant C\|u_{0}\|_{L^{r_1}}+C\langle t \rangle^{\frac{1}{q_1}}\int^{t}_{0}\langle t-s \rangle^{-\frac{N}{2}(\frac{\rho_1}{p_2}-\frac{1}{p_1})+\frac{\sigma}{2}}s^\gamma\langle s \rangle^{-\frac{\rho_1}{q_2}}ds\|v\|^{\rho_1}_{X_{2}}\\
&\leqslant  C\|u_{0}\|_{L^{r_1}}+C\langle t \rangle^{\frac{1}{q_1}-\frac{N}{2}(\frac{\rho_1}{p_2}-\frac{1}{p_1})+\frac{\sigma}{2}+\gamma-\frac{\rho_1}{q_2}+1}
\int^{1}_{0}(1-s)^{-\frac{N}{2}(\frac{\rho_1}{p_2}-\frac{1}{p_1})+\frac{\sigma}{2}}s^{\gamma-\frac{\rho_1}{q_2}}ds\|v\|^{\rho_1}_{X_2}.
\end{align*}
Note that $$\frac{1}{q_1}-\frac{N}{2}\left(\frac{\rho_1}{p_2}-\frac{1}{p_1}\right)+\frac{\sigma}{2}+\gamma-\frac{\rho_1}{q_2}+1=0,$$
then we have
\begin{align}\label{v1}
\begin{split}
\|\Lambda_1 v\|_{X_{1}} \leqslant C(\|u_{0}\|_{L^{r_1}}+B_2\|v\|^{\rho_1}_{X_2}).
\end{split}
\end{align}
Arguing similarly as in deriving \eqref{v1}, set $B_3:={{\rm{B}}}\big(1-\frac{N}{2}\big(\frac{\rho_2}{p_1}-\frac{1}{p_2}\big)+\frac{\sigma}{2},1+\gamma-\frac{\rho_2}{q_1}\big)$, we have
\begin{align}\label{u1}
\begin{split}
\|\Lambda_2 u\|_{X_{2}} \leqslant C(\|v_{0}\|_{L^{r_2}}+B_3\|u\|^{\rho_2}_{X_1}).
\end{split}
\end{align}
Let $0<R<\min\left\{(2CB_{2})^{\frac{1}{1-\rho_{1}}}, (2CB_{3})^{\frac{1}{1-\rho_{2}}}\right\}$, it then follows from \eqref{v1} and \eqref{u1} that 
$$\|\Lambda(u,v)\|_Y=\max\left\{\|\Lambda_1 v\|_{X_{1}}, \|\Lambda_2 u\|_{X_{2}}\right\}\leqslant R,$$
for $\|u_{0}\|_{L^{r_1}}, \|v_{0}\|_{L^{r_2}}\leqslant R/(2C)$. Thus, $\Lambda$ is a self-mapping.

Next, we show that the operator $\Lambda$ is a contraction.
If $(u_1,v_1),(u_2,v_2)\in B_{R}(Y)$, by the same argument employed to derive \eqref{v1} and H\"{o}lder's inequality, we have
\begin{align}\label{v21}
\begin{split}
&\|\Lambda_1v_1(\cdot,t)-\Lambda_1 v_2(\cdot,t)\|_{L^{r_1}}\\
\leqslant& C\int^{t}_{0}\langle t-s \rangle^{-\frac{N}{2}(\frac{\rho_1}{p_2}-\frac{1}{r_1})+\frac{\sigma}{2}}s^\gamma\|v_1^{\rho_1}(\cdot,s)-v_2^{\rho_1}(\cdot,s)\|_{L^{\frac{p_2}{\rho_1}}}ds\\
\leqslant& C\int^{t}_{0}\langle t-s \rangle^{-\frac{N}{2}(\frac{\rho_1}{p_2}-\frac{1}{r_1})+\frac{\sigma}{2}}s^\gamma \\
&\times\big(\|v_1(\cdot,s)\|^{\rho_1-1}_{L^{p_2}}+\|v_2(\cdot,s)\|^{\rho_1-1}_{L^{p_2}}\big)\|v_1(\cdot,s)-v_2(\cdot,s)\|_{L^{p_2}}ds\\
\leqslant& C\int^{t}_{0}\langle t-s \rangle^{-\frac{N}{2}(\frac{\rho_1}{p_2}-\frac{1}{r_1})+\frac{\sigma}{2}}s^\gamma \langle s \rangle^{-\frac{\rho_1}{q_2}}ds(\|v_1\|^{\rho_1-1}_{X_{2}}+\|v_2\|^{\rho_1-1}_{X_{2}})\|v_1-v_2\|_{X_2}\\
\leqslant& 2CB_{2}R^{\rho_1-1}\|v_1-v_2\|_{X_2},
\end{split}
\end{align}
and
\begin{align}\label{v2}
\begin{split}
&\langle t \rangle^{\frac{1}{q_1}}\|\Lambda_1v_1(\cdot,t)-\Lambda_1 v_2(\cdot,t)\|_{L^{p_1}}\\
\leqslant& C\langle t \rangle^{\frac{1}{q_1}}\int^{t}_{0}\langle t-s \rangle^{-\frac{N}{2}(\frac{\rho_1}{p_2}-\frac{1}{p_1})+\frac{\sigma}{2}}s^\gamma \\
&\times\big(\|v_1(\cdot,s)\|^{\rho_1-1}_{L^{p_2}}+\|v_2(\cdot,s)\|^{\rho_1-1}_{L^{p_2}}\big)\|v_1(\cdot,s)-v_2(\cdot,s)\|_{L^{p_2}}ds\\
\leqslant& 2CR^{\rho_1-1}\langle t \rangle^{\frac{1}{q_1}}\int^{t}_{0}\langle t-s \rangle^{-\frac{N}{2}(\frac{\rho_1}{p_2}-\frac{1}{p_1})+\frac{\sigma}{2}}s^\gamma \langle s \rangle^{-\frac{\rho_1}{q_2}}ds\|v_1-v_2\|_{X_2}\\
\leqslant& 2CB_2R^{\rho_1-1}\|v_1-v_2\|_{X_2}.
\end{split}
\end{align}
Similarly, we have
\begin{align}\label{u2}
\|\Lambda_2 u_1-\Lambda_2 u_2\|_{X_{2}}\leqslant 2CB_3R^{\rho_2-1}\|u_1-u_2\|_{X_1}.
\end{align}
It follows from \eqref{v21}-\eqref{u2} that
 \begin{align*}
\|\Lambda(u_1,v_1)-\Lambda(u_2,v_2)\|_Y =& \max\left\{\|\Lambda_1 v_1-\Lambda_1 v_2\|_{X_{1}}, \|\Lambda_2 u_1-\Lambda_2 u_2\|_{X_{2}}\right\}\\
\leqslant& 2C\max\{B_2R^{\rho_1-1},B_3R^{\rho_2-1}\}\|(u_1,v_1)-(u_2,v_2)\|_{Y}.
\end{align*}
Recall the choice of $R$, we have $2C\max\{B_2R^{\rho_1-1},B_3R^{\rho_2-1}\}<1$. Hence, $\Lambda$ is a contraction. By the contraction-mapping principle, $\Lambda$ has a unique fixed point in $B_{R}(Y)$, and the proof is completed.
\end{proof}
\section{Finite time blow-up results}
In this section, we exploit the test function method to fix the non-global solutions of the problem \eqref{(2)} and system \eqref{(6)}. That is to say, we will show the energy blowing-up by determining the interactions among the two kinds of diffusions and the sources via a series of precise integral estimates.

\subsection{Blow-up results for the problem}
First, we give the definition of weak solutions for the problem \eqref{(2)} as follows.
\begin{Def}\label{def3}
Let $u_{0}\in L_{loc}^1(\mathbb{R}^N)$. A function $u$ is called a local weak solution to the problem \eqref{(2)} defined on $\mathbb{R}^N\times(0,T)$, if $|x|^\sigma t^\gamma u^\rho(x,t)\in L_{loc}^1(\mathbb{R}^N\times(0,T))$ and
\begin{align*}
&\quad-\int_{\mathbb{R}^N}u_0(x)\varphi(x,0) dx-\int^T_0\int_{\mathbb{R}^N}u\partial_t\varphi dxdt- \int^T_0\int_{\mathbb{R}^N}u(1+kD_{T^-}^{\alpha})\Delta \varphi dxdt\\
&=\int^T_0\int_{\mathbb{R}^N}|x|^\sigma t^\gamma u^{\rho}\varphi dxdt,
\end{align*}
for any test function $\varphi$ such that $\varphi(x,T)=0$.
\end{Def}
Recall the definitions of the fractional integration, we have the formula of fractional integration by parts
\begin{align}\label{(9)}
\int^{T}_{0}\left(I^{\alpha}_{0^+}f\right)(t)g(t)dt=\int^{T}_{0}f(t)\left(I^{\alpha}_{T^-}g\right)(t)dt,
\end{align}
where $f\in L^{p}(0,T), \ g\in L^{q}(0,T), \ p, q\geqslant1, \ \frac{1}{p}+\frac{1}{q}<1+\alpha.$ For more details, we can see \cite{Y2014}.

Now  we prove that a mild solution is also a weak solution, which is an important procedure in the sufficient and necessary conditions for the nonexistence of global solution.
\begin{proof}[Proof of Lemma \ref{mw}]
Assuming that $u\in C([0,T];L^{r})$ is a mild solution of the problem \eqref{(2)}, we have
$$u(x,t)=S_{\alpha}(t)u_{0}(x)+\int^{t}_{0}S_{\alpha}(t-s)|x|^\sigma s^\gamma u^{\rho}(x,s)ds=S_{\alpha}(t)u_{0}(x)+\mathcal {G}u(x,t).$$
For every test function $\varphi$ with $\varphi(x,T)=0$, we get
\begin{align}\label{001}
    \int_{\mathbb{R}^N}kI^{1-\alpha}_{0^+}uA\varphi dx
    =\int_{\mathbb{R}^N}kI^{1-\alpha}_{0^+}(S_{\alpha}(t)u_0)A\varphi dx+\int_{\mathbb{R}^N}kI^{1-\alpha}_{0^+}(\mathcal{G}u(x,t))A\varphi dx.
\end{align}
Then by Lemma \ref{P1}-{\rm{(ii)}}, we have
\begin{align}\label{003}
\begin{split}
&\frac{d}{dt}\int_{\mathbb{R}^N}kI^{1-\alpha}_{0^+}(S_{\alpha}(t)u_0)A\varphi dx\\
=&\int_{\mathbb{R}^N}kD^{\alpha}_{0^+}A (S_{\alpha}(t)u_0)\varphi dx+\int_{\mathbb{R}^N}kI^{1-\alpha}_{0^+}(S_{\alpha}(t)u_0)A \varphi'dx\\
=&\int_{\mathbb{R}^N}(-S'_{\alpha}(t)u_0-AS_{\alpha}(t)u_0)\varphi dx+\int_{\mathbb{R}^N}S_{\alpha}(t)u_0 kI^{1-\alpha}_{T^-}A\varphi' dx.
\end{split}
\end{align}
Hence by the integration by parts, we have
\begin{align}\label{009}
\begin{split}
0=& \int^T_0\frac{d}{dt}\int_{\mathbb{R}^N}kI^{1-\alpha}_{0^+}(S_{\alpha}(t)u_0)A\varphi dxdt\\
    =&\int_{\mathbb{R}^N}u_0(x)\varphi(x,0)dx+\int^T_0\int_{\mathbb{R}^N}S_{\alpha}(t)u_0\varphi'dxdt\\
    &-\int^T_0\int_{\mathbb{R}^N}S_{\alpha}(t)u_0A\varphi dxdt-\int^T_0\int_{\mathbb{R}^N}S_{\alpha}(t)u_0kD^{\alpha}_{T^-}A \varphi dxdt.
\end{split}
\end{align}

On the other hand, since $I^{1-\alpha}_{0^+}(\mathcal {G}u(x,t))\in D(A)$, we get
\begin{align*}
  \int_{\mathbb{R}^N}kI^{1-\alpha}_{0^+} (\mathcal {G}u(x,t))A\varphi dx
  =&\int_{\mathbb{R}^N}A\big(kI^{1-\alpha}_{0^+} (\mathcal {G}u(x,t))\big)\varphi dx\\
  =&\int_{\mathbb{R}^N}A(kg_{1-\alpha}\ast S_{\alpha}(t))\ast(|x|^\sigma t^\gamma u^\rho)\varphi dx,
\end{align*}
where $g_{\alpha}(t)$ is given by
\begin{equation}
g_{\alpha}(t)=\frac{t^{\alpha-1}}{\Gamma(\alpha)}.
\end{equation}

It follows from Lemma \ref{P1}-{\rm{(iii)}} that
\begin{align}\label{007}
\begin{split}
  &\int^T_0\frac{d}{dt}\int_{\mathbb{R}^N}kI^{1-\alpha}_{0^+}(\mathcal{G}u(x,t))A\varphi dxdt\\
  =&\int^T_0\int_{\mathbb{R}^N}\big(A(kg_{1-\alpha}\ast S_{\alpha}(t))\ast(|x|^\sigma t^\gamma u^\rho)\big)'\varphi dxdt+\int^T_0\int_{\mathbb{R}^N}kI^{1-\alpha}_{0^+} (\mathcal{G}u(x,t))A\varphi' dxdt\\
  =&\int^T_0\int_{\mathbb{R}^N}\left(\big(I-S_{\alpha}(t)-A(g_1\ast S_{\alpha}(t))\big)\ast(|x|^\sigma t^\gamma u^\rho)\right)'\varphi dxdt+\int^T_0\int_{\mathbb{R}^N}\mathcal{G}u(x,t)kI^{1-\alpha}_{T^-}A\varphi' dxdt\\
  =&\lim_{h\to0}\int^{T-h}_0\int_{\mathbb{R}^N}\frac{1}{h}\big(\mathcal{G}u(x,t)-\mathcal{G}u(x,t+h)\big)\varphi(x,t) dxdt+\int^T_0\int_{\mathbb{R}^N}|x|^\sigma t^\gamma u^\rho\varphi dxdt\\
  &-\int^T_0\int_{\mathbb{R}^N}\mathcal {G}u(x,t)A\varphi dxdt-\int^T_0\int_{\mathbb{R}^N}\mathcal {G}u(x,t)kD^{\alpha}_{T^-}A\varphi dxdt.
  \end{split}
\end{align}
%Consider that
%\begin{align}\label{21}
%\int^T_0\int_{\mathbb{R}^N}\big(A(g_1\ast S_{\alpha}(t))\ast(|x|^\sigma s^\gamma u^\rho)\big)'\varphi dxdt=\int^T_0\int_{\mathbb{R}^N}\mathcal {G}u(x,t)A\varphi dxdt,
%\end{align}
%\begin{align}\label{22}
%\int^T_0\int_{\mathbb{R}^N}\big(I\ast(|x|^\sigma s^\gamma u^\rho)\big)'\varphi dxdt=\int^T_0\int_{\mathbb{R}^N}|x|^\sigma s^\gamma u^\rho\varphi dxdt,
%\end{align}
%and
%\begin{align}\label{23}
%    \int^T_0\int_{\mathbb{R}^N}\big(S_{\alpha}(t)\ast(|x|^\sigma s^\gamma u^\rho)\big)'\varphi dxdt= \int^T_0\int_{\mathbb{R}^N}\big(\mathcal {G}u(x,t)\big)'\varphi dxdt.
%\end{align}
Note that for every $h>0$, by dominated convergence theorem, we conclude that
\begin{align*}
 %   &\int^{T-h}_0\int_{\mathbb{R}^N}\big(\mathcal {G}u(x,t)\big)'\varphi(x,t) dxdt\\=
 &\lim_{h\to0}\int^{T-h}_0\int_{\mathbb{R}^N}\frac{1}{h}\big(\mathcal {G}u(x,t+h)-\mathcal{G}u(x,t)\big)\varphi(x,t)dxdt\\
=&\lim_{h\to0}\frac{1}{h}\int^{T-h}_0\int_{\mathbb{R}^N}\mathcal {G}u(x,t+h)\varphi(x,t+h) dxdt-\lim_{h\to0}\frac{1}{h}\int^{T-h}_0\int_{\mathbb{R}^N}\mathcal {G}u(x,t)\varphi(x,t)dxdt\\
&-\lim_{h\to0}\int^{T-h}_0\int_{\mathbb{R}^N}\frac{1}{h}\mathcal {G}u(x,t+h)\big(\varphi(x,t+h)-\varphi(x,t)\big)dxdt\\
=&\lim_{h\to0}\frac{1}{h}\int^{T}_{T-h}\int_{\mathbb{R}^N}\mathcal {G}u(x,t)\varphi(x,t) dxdt-\lim_{h\to0}\frac{1}{h}\int^h_0\int_{\mathbb{R}^N}\mathcal {G}u(x,t)\varphi(x,t)dxdt\\
&-\lim_{h\to0}\int^{T-h}_0\int_{\mathbb{R}^N}\frac{1}{h}\mathcal {G}u(x,t+h)\big(\varphi(x,t+h)-\varphi(x,t)\big)dxdt\\
=&\int_{\mathbb{R}^N}\mathcal{G}u(x,T)\varphi(x,T)dx-\int^T_0\int_{\mathbb{R}^N}\mathcal {G}u(x,t)\varphi'(x,t)dxdt.
\end{align*}
Since $\varphi(x,T)=0$, then we have
\begin{align}\label{005}
\lim_{h\to0}\int^{T-h}_0\int_{\mathbb{R}^N}\frac{1}{h}\big(\mathcal {G}u(x,t+h)-\mathcal {G}u(x,t)\big)\varphi(x,t) dxdt=-\int^T_0\int_{\mathbb{R}^N}\mathcal {G}u(x,t)\varphi'(x,t) dxdt.
\end{align}
It follows from \eqref{007} and \eqref{005} that
\begin{align}\label{008}
\begin{split}
    0=&\int^T_0\frac{d}{dt}\int_{\mathbb{R}^N}kI^{1-\alpha}_{0^+} (\mathcal {G}u(x,t))A\varphi dxdt\\
    =&\int^T_0\int_{\mathbb{R}^N}\mathcal {G}u(x,t)\varphi'(x,t) dxdt+\int^T_0\int_{\mathbb{R}^N}|x|^\sigma t^\gamma u^\rho\varphi dxdt\\
  &-\int^T_0\int_{\mathbb{R}^N}\mathcal {G}u(x,t)A\varphi dxdt-\int^T_0\int_{\mathbb{R}^N}\mathcal {G}u(x,t)kD^{\alpha}_{T^-}A\varphi dxdt.
\end{split}
\end{align}
Combining \eqref{009} and \eqref{008}, we get with $A=-\Delta$ that
\begin{align*}
 &-\int_{\mathbb{R}^N}u_0(x)\varphi(x,0)dx-\int^T_0\int_{\mathbb{R}^N}u\partial_t\varphi dxdt-\int^T_0\int_{\mathbb{R}^N}u(1+kD^{\alpha}_{T^-})\Delta\varphi dxdt\\
=&\int^T_0\int_{\mathbb{R}^N}|x|^\sigma t^\gamma u^\rho\varphi dxdt,
\end{align*}
which completes the proof.
\end{proof}

Let us construct the function
\begin{align}\label{42}
\theta(t)=
\begin{cases}
1, & 0\leqslant t< \frac{T}{2}, \\
2^{\lambda}T^{-\lambda}(T-t)^{\lambda}, &\frac{T}{2}\leqslant t \leqslant T, \ \lambda>0, \\
0, &t>T,
\end{cases}
\end{align}
then we obtain the formula of integration by parts for $\theta(t)$ as follows.
\begin{Lem}\label{lemma.8}
Let $\theta(t)$ be given in {\rm\eqref{42}} and $f\in C([0,T])$. Then
\begin{align*}
\int^{T}_{0}\theta(t)D^{\alpha}_{0^+}f(t)dt=\int^{T}_{0}f(t)D^{\alpha}_{T^-}\theta(t)dt.
\end{align*}
\end{Lem}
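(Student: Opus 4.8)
The plan is to reduce the claim to the fractional integration-by-parts formula \eqref{(9)} of order $1-\alpha$, transferring the derivative from $f$ onto $\theta$ by an ordinary integration by parts and then invoking the boundary value $\theta(T)=0$ furnished by \eqref{42}.

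First I would rewrite the Riemann--Liouville derivative as $D^{\alpha}_{0^+}f=\frac{d}{dt}\bigl(I^{1-\alpha}_{0^+}f\bigr)$ from Definition \ref{def4} and integrate by parts in $t$:
$$\int^{T}_{0}\theta(t)D^{\alpha}_{0^+}f(t)\,dt=\Bigl[\theta(t)\bigl(I^{1-\alpha}_{0^+}f\bigr)(t)\Bigr]^{T}_{0}-\int^{T}_{0}\theta'(t)\bigl(I^{1-\alpha}_{0^+}f\bigr)(t)\,dt.$$
The boundary contribution vanishes: at $t=T$ because $\theta(T)=0$ by \eqref{42}, and at $t=0$ because the fractional integral of a continuous function satisfies $\lim_{t\to0}\bigl(I^{1-\alpha}_{0^+}f\bigr)(t)=0$ (indeed $|(I^{1-\alpha}_{0^+}f)(t)|\leqslant C\|f\|_{\infty}t^{1-\alpha}$).

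Next I would apply \eqref{(9)} with order $1-\alpha$ to the pair $(f,\theta')$, moving the left fractional integral onto $\theta'$ as a right fractional integral:
$$\int^{T}_{0}\theta'(t)\bigl(I^{1-\alpha}_{0^+}f\bigr)(t)\,dt=\int^{T}_{0}f(t)\bigl(I^{1-\alpha}_{T^-}\theta'\bigr)(t)\,dt.$$
Finally, since $\theta(T)=0$, the identity \eqref{41} applied to $\theta$ gives $D^{\alpha}_{T^-}\theta=D^{\alpha}_{T^-}(\theta-\theta(T))=-\bigl(I^{1-\alpha}_{T^-}\theta'\bigr)$, so that $\bigl(I^{1-\alpha}_{T^-}\theta'\bigr)(t)=-D^{\alpha}_{T^-}\theta(t)$. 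Substituting this back and collecting the two sign changes yields exactly $\int^{T}_{0}\theta(t)D^{\alpha}_{0^+}f(t)\,dt=\int^{T}_{0}f(t)D^{\alpha}_{T^-}\theta(t)\,dt$, as claimed.

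The routine algebra notwithstanding, the main obstacle will be the regularity bookkeeping. The ordinary integration by parts in the first step presupposes that $I^{1-\alpha}_{0^+}f$ is absolutely continuous, i.e.\ that $D^{\alpha}_{0^+}f$ genuinely exists as an integrable function; in the blow-up application $f$ carries more than mere continuity, which I would use to license this step. One must also verify the admissibility hypothesis $\frac1p+\frac1q<2-\alpha$ of \eqref{(9)}: taking $f$ bounded allows $p$ arbitrarily large, while $\theta'(t)$ behaves like $(T-t)^{\lambda-1}$ near $t=T$ and so lies in $L^{q}(0,T)$ for $q$ with $q(1-\lambda)<1$, which suffices for every $\lambda>0$. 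Checking these integrability conditions, together with the vanishing of the boundary terms, is where the care is needed; the rest is a direct chain of the three displayed identities.
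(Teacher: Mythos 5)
Your proposal is correct and follows essentially the same route as the paper: rewrite $D^{\alpha}_{0^+}f$ as $\frac{d}{dt}(I^{1-\alpha}_{0^+}f)$, integrate by parts using $\theta(T)=0$ and $(I^{1-\alpha}_{0^+}f)(0)=0$ to kill the boundary terms, transfer the fractional integral via \eqref{(9)} of order $1-\alpha$, and conclude with \eqref{41}. The paper's only added remark is that \eqref{(9)} applies with $p=\infty$, $q=1$ (since $\theta'\in L^1(0,T)$), which your integrability discussion covers with room to spare.
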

\begin{proof}
Notice that $\theta(t)\in AC([0,T])$ with $\theta(T)=0$ and $\theta'(t)\in L^{1}(0,T)$. Then it follows from \eqref{(9)} and \eqref{41} that
\begin{align*}
\int^{T}_{0}\theta(t)D^{\alpha}_{0^+}f(t)dt&=\int^{T}_{0}(I^{1-\alpha}_{0^+}f)'(t)\theta(t)dt\\
&=\left(I^{1-\alpha}_{0^+}f\right)(T)\theta(T)-\left(I^{1-\alpha}_{0^+}f\right)(0)\theta(0)
-\int^{T}_{0}\left(I^{1-\alpha}_{0^+}f\right)(t)\theta'(t)dt\\
&=-\int^{T}_{0}f(t)(I^{1-\alpha}_{T^-}\theta')(t)dt\\
&=\int^{T}_{0}f(t)D^{\alpha}_{T^-}\theta(t)dt.
\end{align*}
Indeed, \eqref{(9)} is satisfied with $p=\infty$ and $q=1$. The proof is finished.
\end{proof}

Next, we give the following estimates which will be used later.
\begin{Lem}\label{lemma.9}
Let $\theta(t)$ be given in {\rm\eqref{42}} with $\lambda\geqslant q\alpha$ and $q>1$, $\alpha\in(0,1)$. Then
\begin{align*}
\int^{T}_{0}t^{\gamma(1-q)}\theta^{1-q}(t)\big(D^{\alpha}_{T^-}\theta\big)^{q}(t)dt \leqslant C(\lambda,q,\alpha,\gamma)T^{\gamma
(1-q)+1-q\alpha},
\end{align*}
for $\gamma(1-q)+1>0$, where
\begin{align*}
C(\lambda,q,\alpha,\gamma)=\left(\frac{\Gamma(\lambda+1)}{\Gamma(\lambda+1-\alpha)}\right)^{q}
\frac{2^{q\alpha-\gamma(1-q)-1}}{\gamma(1-q)+1}.
\end{align*}
\end{Lem}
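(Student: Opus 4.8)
The plan is to reduce everything to explicit knowledge of $D^{\alpha}_{T^-}\theta$, since the integrand is built entirely from it. First I would compute this fractional derivative. Because $\theta(T)=0$ and $\theta\in AC([0,T])$ with $\theta'\in L^{1}(0,T)$, formula \eqref{41} gives $D^{\alpha}_{T^-}\theta(t)=-\big(I^{1-\alpha}_{T^-}\theta'\big)(t)$, and from \eqref{42} we have $\theta'(s)=0$ on $[0,T/2)$ and $\theta'(s)=-\lambda 2^{\lambda}T^{-\lambda}(T-s)^{\lambda-1}$ on $(T/2,T)$. Hence
\[
D^{\alpha}_{T^-}\theta(t)=\frac{\lambda 2^{\lambda}T^{-\lambda}}{\Gamma(1-\alpha)}\int_{\max\{t,T/2\}}^{T}(s-t)^{-\alpha}(T-s)^{\lambda-1}\,ds .
\]

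For $t\in[T/2,T]$ I would evaluate this integral in closed form. The substitution $s=t+(T-t)\tau$ turns $\int_{t}^{T}(s-t)^{-\alpha}(T-s)^{\lambda-1}\,ds$ into $(T-t)^{\lambda-\alpha}\,{\rm{B}}(1-\alpha,\lambda)$, so that, using ${\rm{B}}(1-\alpha,\lambda)=\Gamma(1-\alpha)\Gamma(\lambda)/\Gamma(\lambda+1-\alpha)$ together with $\lambda\Gamma(\lambda)=\Gamma(\lambda+1)$,
\[
D^{\alpha}_{T^-}\theta(t)=\frac{2^{\lambda}\Gamma(\lambda+1)}{T^{\lambda}\Gamma(\lambda+1-\alpha)}(T-t)^{\lambda-\alpha},\qquad t\in[T/2,T].
\]
For $t\in[0,T/2)$ the lower limit is $T/2>t$, and since the integrand is nonnegative, shrinking the domain from $[t,T]$ to $[T/2,T]$ only decreases the value; therefore the same right-hand side serves as a pointwise upper bound for $D^{\alpha}_{T^-}\theta(t)$ on all of $[0,T)$.

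With this in hand I would substitute into the target integral and split $[0,T]=[0,T/2]\cup[T/2,T]$. On $[T/2,T]$ we have $\theta(t)=2^{\lambda}T^{-\lambda}(T-t)^{\lambda}$, so the powers of $(T-t)$ coming from $\theta^{1-q}$ and from $(D^{\alpha}_{T^-}\theta)^{q}$ combine to the exponent $\lambda(1-q)+q(\lambda-\alpha)=\lambda-q\alpha\geqslant0$ --- this is exactly where $\lambda\geqslant q\alpha$ is used, since it prevents a singularity at $t=T$ --- while the prefactors collapse to $2^{\lambda}T^{-\lambda}\big(\Gamma(\lambda+1)/\Gamma(\lambda+1-\alpha)\big)^{q}$. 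On $[0,T/2]$ we have $\theta\equiv1$ and use the pointwise bound above. In each piece the scaling $t=Ts$ extracts the factor $T^{\gamma(1-q)+1-q\alpha}$ and reduces the remaining integral to an elementary, Beta-type integral in $s$; the hypothesis $\gamma(1-q)+1>0$ is precisely what makes $\int_{0}s^{\gamma(1-q)}\,ds$ converge at the origin. Collecting the resulting powers of $2$ and the elementary constants then yields a bound of the asserted form $C(\lambda,q,\alpha,\gamma)\,T^{\gamma(1-q)+1-q\alpha}$.

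The main obstacle I anticipate is twofold. First, the closed-form evaluation of $D^{\alpha}_{T^-}\theta$ rests on recognizing the Beta integral after the substitution and on the passage from ${\rm{B}}$ to Gamma factors, and one must treat the region $[0,T/2)$ by the monotonicity/positivity argument rather than an exact identity. Second, arriving at the precise constant $C(\lambda,q,\alpha,\gamma)$ requires careful bookkeeping of the powers of $2$ and $T$ produced by the two regions and by the rescaling; the delicate point is ensuring that it is the exponent $\lambda-q\alpha$ (and not $q(\lambda-\alpha)$) that governs the behaviour near $t=T$, which is what keeps the estimate integrable and of the stated order.
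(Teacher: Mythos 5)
Your strategy is the same as the paper's: compute $D^{\alpha}_{T^-}\theta$ from \eqref{41}, evaluate it in closed form on $[T/2,T]$ via the Beta integral (obtaining $\frac{2^{\lambda}T^{-\lambda}\Gamma(\lambda+1)}{\Gamma(\lambda+1-\alpha)}(T-t)^{\lambda-\alpha}$, exactly as in the paper), bound it pointwise on $[0,T/2)$, split the integral at $T/2$, and rescale; your identification of $\lambda-q\alpha\geqslant0$ (not $q(\lambda-\alpha)$) as the exponent governing the behaviour at $t=T$ is also precisely the point the paper exploits. Everything you assert is true, and the argument delivers the correct power $T^{\gamma(1-q)+1-q\alpha}$.

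The one place you genuinely deviate --- and where the argument falls short of the statement --- is the bound on $[0,T/2)$. Enlarging the domain of integration from $[T/2,T]$ to $[t,T]$ is valid, but it yields $D^{\alpha}_{T^-}\theta(t)\leqslant \frac{2^{\lambda}\Gamma(\lambda+1)}{T^{\lambda}\Gamma(\lambda+1-\alpha)}(T-t)^{\lambda-\alpha}\leqslant \frac{2^{\lambda}\Gamma(\lambda+1)}{\Gamma(\lambda+1-\alpha)}T^{-\alpha}$, so after raising to the $q$-th power the $[0,T/2]$ piece carries the factor $2^{q\lambda}$, and your final constant is of the form $\left(\frac{\Gamma(\lambda+1)}{\Gamma(\lambda+1-\alpha)}\right)^{q}\frac{2^{q\lambda-\gamma(1-q)-1}}{\gamma(1-q)+1}$ rather than the stated $C(\lambda,q,\alpha,\gamma)$ containing $2^{q\alpha-\gamma(1-q)-1}$. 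Since $\lambda\geqslant q\alpha>\alpha$, your constant is strictly larger (exponentially so in $\lambda$), so as written you do not prove the lemma with its advertised constant. The paper avoids this loss by a different elementary inequality: for $t<T/2$ it keeps the domain $[T/2,T]$ but shifts the singularity, bounding $(s-t)^{-\alpha}\leqslant(s-T/2)^{-\alpha}$; the resulting integral is again a Beta integral and gives the $t$-independent bound $D^{\alpha}_{T^-}\theta(t)\leqslant \frac{2^{\alpha}T^{-\alpha}\Gamma(\lambda+1)}{\Gamma(\lambda+1-\alpha)}$, whence $2^{q\alpha}$. Two mitigating remarks: in the only place the lemma is used (the proof of Theorem \ref{th.1}) nothing but the power of $T$ and the $T$-independence of the constant matters, so your weaker bound would serve equally well there; and the paper's own constant bookkeeping is itself loose (it bounds each of the two pieces separately by the stated $C$ and never adds them). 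Still, to prove the lemma as literally stated, replace your domain-monotonicity step by the singularity-shift $(s-t)^{-\alpha}\leqslant(s-T/2)^{-\alpha}$.
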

\begin{proof}
For $\frac{T}{2}\leqslant t\leqslant T$, from \eqref{42} and \eqref{41}, we have
\begin{align*}
D^{\alpha}_{T^-}\theta(t)%=D^{\alpha}_{T^-}\left(\theta(t)-\theta(T)\right)
=-\left(I^{1-\alpha}_{T^-}\theta'\right)(t)=\frac{2^{\lambda} T^{-\lambda}\lambda}{\Gamma(1-\alpha)}\int^{T}_{t}(s-t)^{-\alpha}(T-s)^{\lambda-1}ds.
\end{align*}
Using the change of variable $\tau=\frac{T-s}{T-t}$, we get
\begin{align*}
  D^{\alpha}_{T^-}\theta(t) &=\frac{2^\lambda T^{-\lambda}\lambda}{\Gamma(1-\alpha)}(T-t)^{\lambda-\alpha}\int^1_0(1-\tau)^{-\alpha}\tau^{\lambda-1}d\tau\\
  %& =\frac{2^\lambda T^{-\lambda}\lambda}{\Gamma(1-\alpha)}\frac{\Gamma(\lambda)\Gamma(1-\alpha)}{\Gamma(\lambda+1-\alpha)}(T-t)^{\lambda-\alpha}
  &=\frac{2^\lambda T^{-\lambda}\Gamma(\lambda+1)}{\Gamma(\lambda+1-\alpha)}(T-t)^{\lambda-\alpha}.
\end{align*}
%and
%\begin{align*}
%\theta^{1-q}(t)\big(D^{\alpha}_{T^-}\theta\big)^{q}(t)
%&=\left(2^{\lambda}T^{-\lambda}(T-t)^{\lambda}\right)^{1-q}\left(2^{\lambda}T^{-\lambda}
%\frac{\Gamma(1+\lambda)}{\Gamma(\lambda+1-\alpha)}(T-t)^{\lambda-\alpha}\right)^{q}\\
%&=\left(\frac{\Gamma(\lambda+1)}{\Gamma(\lambda+1-\alpha)}\right)^{q}2^{\lambda}T^{-\lambda}(T-t)^{\lambda-q\alpha}.
%\end{align*}
By $\lambda\geqslant q\alpha$ and $\gamma(1-q)+1>0$, we have
\begin{align*}
\int^{T}_{\frac{T}{2}}t^{\gamma(1-q)}\theta^{1-q}(t)\big(D^{\alpha}_{T^-}\theta\big)^{q}(t)dt
&=\left(\frac{\Gamma(\lambda+1)}{\Gamma(\lambda+1-\alpha)}\right)^{q}2^{\lambda}T^{-\lambda}
\int^{T}_{\frac{T}{2}}t^{\gamma(1-q)}(T-t)^{\lambda-q\alpha}dt\\
&\leqslant\left(\frac{\Gamma(\lambda+1)}{\Gamma(\lambda+1-\alpha)}\right)^{q}
2^{q\alpha}T^{-q\alpha}\int^{T}_{\frac{T}{2}}t^{\gamma(1-q)}dt\\
%&=\left(\frac{\Gamma(\lambda+1)}{\Gamma(\lambda+1-\alpha)}\right)^{q}2^{q\alpha}T^{-q\alpha}
%\frac{t^{\gamma(1-q)+1}}{\gamma(1-q)+1}\bigg|^{T}_{\frac{T}{2}}\\
&\leqslant\left(\frac{\Gamma(\lambda+1)}{\Gamma(\lambda+1-\alpha)}\right)^{q}
\frac{2^{q\alpha-\gamma(1-q)-1}}{\gamma(1-q)+1}T^{\gamma(1-q)+1-q\alpha}.
\end{align*}
%Let $\tau=\frac{t}{T}$, then we have
%\begin{align*}
%  \int^{T}_{0}t^{\gamma(1-q)}(T-t)^{\lambda-q\alpha}dt %  &=T^{\gamma(1-q)+\lambda-q\alpha+1}\int^{1}_{0}\tau^{\gamma(1-q)}(1-\tau)^{\lambda-q\alpha}d\tau \\
%  &=T^{\gamma(1-q)+\lambda-q\alpha+1}\frac{\Gamma(\gamma(1-q)+1)\Gamma(\lambda+1-q\alpha)}{\Gamma(\gamma(1-q)+\lambda-q\alpha+2)}.
%\end{align*}
%Hence
%\begin{align*}
%&\quad \int^{T}_{\frac{T}{2}}t^{\gamma(1-q)}\theta^{1-q}(t)\big(D^{\alpha}_{T^-}\theta(t)\big)^{q}dt\\
%&=\left(\frac{\Gamma(\lambda+1)}{\Gamma(\lambda+1-\alpha)}\right)^{q}\frac{\Gamma(\gamma(1-q)+1)
%\Gamma(\lambda+1-q\alpha)}{\Gamma(\gamma(1-q)+\lambda-q\alpha+2)}
%2^{\lambda}T^{\gamma(1-q)-q\alpha+1}.
%\end{align*}

For $0\leqslant t<\frac{T}{2}$, it follows from \eqref{41} and \eqref{42} that
\begin{align*}
D^{\alpha}_{T^-}\theta(t)&=\frac{-1}{\Gamma(1-\alpha)}\bigg(\int^{T}_{\frac{T}{2}}(s-t)^{-\alpha}\theta'(s)ds+\int^{\frac{T}{2}}_{t}(s-t)^{-\alpha}\theta'(s)ds\bigg)\\
&=\frac{2^{\lambda}T^{-\lambda}\lambda}{\Gamma(1-\alpha)}\int^{T}_{\frac{T}{2}}(s-t)^{-\alpha}(T-s)^{\lambda-1}ds\\
&\leqslant\frac{2^{\lambda}T^{-\lambda}\lambda}{\Gamma(1-\alpha)}\int^{T}_{\frac{T}{2}}
\left(s-\frac{T}{2}\right)^{-\alpha}(T-s)^{\lambda-1}ds. %&=\frac{\Gamma(\lambda+1)}{\Gamma(\lambda+1-\alpha)}2^{\alpha}T^{-\alpha}
\end{align*}
Let $\tau=2-\frac{2s}{T}$, then
\begin{align*}
D^{\alpha}_{T^-}\theta(t)\leqslant
%\frac{2^{\alpha}T^{-\alpha}\lambda}{\Gamma(1-\alpha)}\int^{1}_{0}(1-\tau)^{-\alpha}\tau^{\lambda-1}d\tau
%&=\left(\frac{T}{2}\right)^{\lambda-\alpha}\frac{\Gamma(\lambda)
%\Gamma(1-\alpha)}{\Gamma(\lambda+1-\alpha)}\frac{2^{\lambda}T^{-\lambda}\lambda}{\Gamma(1-\alpha)}\\
%=
\frac{2^\alpha T^{-\alpha}\Gamma(\lambda+1)}{\Gamma(\lambda+1-\alpha)}.
\end{align*}
Therefore
\begin{align*}
\int^{\frac{T}{2}}_{0}t^{\gamma(1-q)}\theta^{1-q}(t)\big(D^{\alpha}_{T^-}\theta\big)^{q}(t)dt
&\leqslant\left(\frac{\Gamma(\lambda+1)}{\Gamma(\lambda+1-\alpha)}\right)^{q}2^{q\alpha} T^{-q\alpha}\int^{\frac{T}{2}}_{0}t^{\gamma(1-q)}dt\\
%&=\left(\frac{\Gamma(\lambda+1)}{\Gamma(\lambda+1-\alpha)}\right)^{q}2^{q\alpha} T^{-q\alpha}\frac{t^{\gamma(1-q)+1}}{\gamma(1-q)+1}\bigg|^{\frac{T}{2}}_0\\
&=\left(\frac{\Gamma(\lambda+1)}{\Gamma(\lambda+1-\alpha)}\right)^{q}\frac{ 2^{q\alpha-\gamma(1-q)-1}}{\gamma(1-q)+1}T^{\gamma(1-q)+1-q\alpha}.
\end{align*}
Combining these two cases, then we complete the proof.
\end{proof}
\begin{Lem}\label{lemma.10}
Let $\theta(t)$ be given in {\rm\eqref{42}} with $\lambda\geqslant q$ and $q>1$. Then
\begin{align*}
\int^{T}_{\frac{T}{2}}t^{\gamma(1-q)}\theta^{1-q}(t)|\theta'(t)|^{q}dt \leqslant C(\lambda,q,\gamma)T^{(\gamma+1)(1-q)},
\end{align*}
for $\gamma(1-q)+1>0$, where
\begin{align*}
C(\lambda,q,\gamma)=
\frac{2^{(\gamma+1)(q-1)}\lambda^{q}}{\gamma(1-q)+1}.
\end{align*}
\end{Lem}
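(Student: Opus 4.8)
The plan is to mirror the computation in Lemma \ref{lemma.9}, but the calculation is now considerably shorter because the ordinary derivative $\theta'$ replaces the fractional derivative and is elementary on the relevant interval. On $[\frac{T}{2},T]$ the function in \eqref{42} is $\theta(t)=2^{\lambda}T^{-\lambda}(T-t)^{\lambda}$, so I would first differentiate directly to obtain $|\theta'(t)|=\lambda\,2^{\lambda}T^{-\lambda}(T-t)^{\lambda-1}$, and then form the product $\theta^{1-q}(t)|\theta'(t)|^{q}$. The central algebraic step is to verify that the powers of $2$, of $T$, and of $(T-t)$ collapse neatly, giving
\begin{align*}
\theta^{1-q}(t)|\theta'(t)|^{q}=2^{\lambda}T^{-\lambda}\lambda^{q}(T-t)^{\lambda-q}.
\end{align*}
Here the hypothesis $\lambda\geqslant q$ is exactly what forces the exponent $\lambda-q\geqslant0$, so that $(T-t)^{\lambda-q}$ stays bounded and integrable up to $t=T$.

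With this identity in hand, the integral reduces to $2^{\lambda}T^{-\lambda}\lambda^{q}\int_{T/2}^{T}t^{\gamma(1-q)}(T-t)^{\lambda-q}\,dt$. Next I would use $T-t\leqslant\frac{T}{2}$ on $[\frac{T}{2},T]$ together with $\lambda-q\geqslant0$ to replace $(T-t)^{\lambda-q}$ by $(\frac{T}{2})^{\lambda-q}$; the prefactor then simplifies cleanly, since $2^{\lambda}T^{-\lambda}(\frac{T}{2})^{\lambda-q}=2^{q}T^{-q}$. What remains is the scalar integral $\int_{T/2}^{T}t^{\gamma(1-q)}\,dt$, which is well defined precisely because $\gamma(1-q)+1>0$, and which I would estimate by $\frac{1}{\gamma(1-q)+1}(\frac{T}{2})^{\gamma(1-q)+1}$, i.e.\ a power of $T$ carrying a further factor $2^{-\gamma(1-q)-1}$ and the denominator $\gamma(1-q)+1$.

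The final paragraph is pure bookkeeping. Collecting the powers of $T$ gives $-q+\gamma(1-q)+1=(\gamma+1)(1-q)$, which is exactly the exponent in the statement, while collecting the powers of $2$ together with $\lambda^{q}$ and $\frac{1}{\gamma(1-q)+1}$ yields the constant $C(\lambda,q,\gamma)$. I expect the only genuinely delicate point to be this exponent accounting, in particular checking that $2^{q}\cdot2^{-\gamma(1-q)-1}=2^{(\gamma+1)(q-1)}$, rather than any analytic difficulty: unlike Lemma \ref{lemma.9} there is no fractional integral to evaluate and $\theta'$ is available in closed form, so the proof is essentially a careful tracking of exponents once the simplification of $\theta^{1-q}|\theta'|^{q}$ is established.
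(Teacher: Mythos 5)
Your argument is exactly the one the paper intends: the paper's proof of Lemma \ref{lemma.10} is omitted with the remark that it is similar to Lemma \ref{lemma.9}, and your chain --- the identity $\theta^{1-q}(t)|\theta'(t)|^{q}=2^{\lambda}T^{-\lambda}\lambda^{q}(T-t)^{\lambda-q}$, the bound $(T-t)^{\lambda-q}\leqslant(T/2)^{\lambda-q}$ from $\lambda\geqslant q$, the simplification $2^{\lambda}T^{-\lambda}(T/2)^{\lambda-q}=2^{q}T^{-q}$, and the final exponent bookkeeping --- is the precise analogue of the displayed computation there, with $\lambda^{q}$ in place of $\big(\Gamma(\lambda+1)/\Gamma(\lambda+1-\alpha)\big)^{q}$ and $q$ in place of $q\alpha$. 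All of that algebra is correct.

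The one step that fails is the estimate $\int_{T/2}^{T}t^{\gamma(1-q)}dt\leqslant\frac{1}{\gamma(1-q)+1}\left(\frac{T}{2}\right)^{\gamma(1-q)+1}$, and it fails precisely in the regime the paper uses. Set $b=\gamma(1-q)$; the integral equals $\frac{T^{b+1}-(T/2)^{b+1}}{b+1}$, which is $\leqslant\frac{(T/2)^{b+1}}{b+1}$ if and only if $T^{b+1}\leqslant2(T/2)^{b+1}=2^{-b}T^{b+1}$, i.e.\ if and only if $b\leqslant0$. Since the paper takes $\gamma\leqslant0$ and $q>1$, one has $b=\gamma(1-q)\geqslant0$: the integrand $t^{b}$ is increasing on $[T/2,T]$, so it is majorized by $T^{b}$, not by $(T/2)^{b}$. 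Consequently the stated constant is not attainable; for instance, with $\lambda=q=2$, $\gamma=-\frac{1}{2}$, $T=2$, the left-hand side of the lemma equals $\int_{1}^{2}4t^{1/2}dt=\frac{8}{3}(2\sqrt{2}-1)\approx4.88$, while $C(\lambda,q,\gamma)T^{(\gamma+1)(1-q)}=\frac{8\sqrt{2}}{3}\cdot2^{-1/2}=\frac{8}{3}\approx2.67$. This defect is inherited rather than introduced: the identical step appears in the paper's own proof of Lemma \ref{lemma.9}, so you have reproduced the intended argument faithfully, flaw included. The repair is one line: bound $\int_{T/2}^{T}t^{b}dt\leqslant\int_{0}^{T}t^{b}dt=\frac{T^{b+1}}{b+1}$ (valid since $b+1>0$), which yields the same power $T^{(\gamma+1)(1-q)}$ with the slightly larger constant $\frac{2^{q}\lambda^{q}}{\gamma(1-q)+1}$; since only the exponent of $T$, and not the value of the constant, enters the blow-up argument in Theorem \ref{th.1}, nothing downstream is affected.
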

\begin{proof}
The proof is similar to Lemma \ref{lemma.9}, so we omit it here.
\end{proof}
Now we prove the finite time blow-up of solutions for the problem \eqref{(2)} given in Theorem \ref{th.1}.
\begin{proof}[\bf{Proof of Theorem \ref{th.1}}]
The proof proceeds by contradiction. Let $\xi$ be a nonincreasing function such that
\begin{align}\label{3}
\xi(x)=
\begin{cases}
1, & 0\leqslant |x|\leqslant \frac{1}{2}, \\
\phi\left(|x|-\frac{1}{2}\right), &\frac{1}{2}< |x|< 1, \\
0, &|x|\geqslant1,
\end{cases}
\end{align}
where $\phi$ is the principal eigenfunction of $-\Delta$ in the $\frac{1}{2}$-ball of $\mathbb{R}^N$ with homogeneous Dirichlet boundary value condition, normalized by $\|\phi\|_{L^\infty(B_{\frac{1}{2}})}=1$. For $R>1$, define $\xi_R(x):=\xi\left(\frac{x}{R}\right)$, $x\in\mathbb{R}^N$. Then we have
\begin{equation}\label{5}
    |\nabla\xi_R|\leqslant\frac{C}{R},\quad|\Delta\xi_R|\leqslant\frac{C}{R^2},
    \quad\frac{|\Delta\xi_R|}{\xi_R}\leqslant\frac{C}{R^2},\quad x\in B_R\setminus B_{\frac{R}{2}},
\end{equation}
where $C$ is a positive constant independent of $R$, and $B_{R}=\{x\in\mathbb{R}^{N}:|x|\leqslant R\}$.

Assume for contradiction that $u$ is a nontrivial nonnegative global weak solution to \eqref{(2)} and denote
\begin{align*}
I_{\rho}:=\int^{T}_{0}\int_{B_{R}}|x|^\sigma t^\gamma u^{\rho}(x,t)\xi_{R}(x)\theta(t)dxdt,
\end{align*}
where $T>1$ and $\theta(t)$ is given in \eqref{42}. Using the definition of weak solution of \eqref{(2)}, taking $\varphi(x,t)=\xi_R(x)\theta(t)$, we have
\begin{align}\label{(20)}
\begin{split}
I_{\rho}&=-\int_{B_{R}}u_{0}(x)\xi_R(x)dx-\int^{T}_{\frac{T}{2}}\int_{B_{R}}u(x,t)\xi_R(x)\theta'(t) dxdt\\
&\quad-\int^{T}_{0}\int_{B_{R}\backslash B_{\frac{R}{2}}}u(x,t)\Delta\xi_R(x)\theta(t)dxdt-k\int^{T}_{0}\int_{B_{R}\backslash B_{\frac{R}{2}}}u(x,t)\Delta\xi_R(x)D_{T^{-}}^{\alpha}\theta(t)dxdt.
\end{split}
\end{align}
Since $u_{0}(x)$ is nonnegative, we have
\begin{align}\label{(12)}
\begin{split}
 I_{\rho} &\leqslant
 -\int^{T}_{\frac{T}{2}}\int_{B_{R}}u(x,t)\xi_R(x)\theta'(t)dxdt-\int^{T}_{0}\int_{B_{R}\backslash B_{\frac{R}{2}}}u(x,t)\Delta\xi_R(x)\theta(t)dxdt\\&\quad-k\int^{T}_{0}\int_{B_{R}\backslash B_{\frac{R}{2}}}u(x,t)\Delta\xi_{R}(x)D_{T^-}^{\alpha}\theta(t)dxdt\\
 &:=P_{1}+P_{2}+P_{3}.
\end{split}
\end{align}
%In the following,

Now we estimate each of the three terms separately. For the term $P_{1}$, multiply by $\theta^{\frac{1}{\rho}}(t)|x|^{\frac{\sigma}{\rho}}t^{\frac{\gamma}{\rho}}
t^{-\frac{\gamma}{\rho}}|x|^{-\frac{\sigma}{\rho}}\theta^{-\frac{1}{\rho}}(t)$ inside the integral, then by H\"{o}lder's inequality, we have
\begin{align*}
P_{1}&\leqslant\int^{T}_{\frac{T}{2}}\int_{B_{R}}u(x,t)\xi^{\frac{1}{\rho}}_R(x)
\theta^{\frac{1}{\rho}}(t)|x|^{\frac{\sigma}{\rho}}t^{\frac{\gamma}{\rho}}t^{-\frac{\gamma}{\rho}}|x|^{-\frac{\sigma}{\rho}}\theta^{-\frac{1}{\rho}}(t)
|\theta'(t)|dxdt\\
%&\leqslant\bigg(\int^{T}_{\frac{T}{2}}\int_{B_{R}}|x|^\sigma t^\gamma u^{p}\xi_R\theta(t) dxdt\bigg)^{\frac{1}{p}}\bigg(\int^{T}_{\frac{T}{2}}\int_{B_{R}}|x|^{\sigma(1-{\rho'})}t^{\gamma(1-{\rho'})}\theta^{1-{\rho'}}(t)|\theta'(t)|^{\rho'}dxdt\bigg)^{\frac{1}{\rho'}}\\
&\leqslant\bigg(\int^{T}_{\frac{T}{2}}\int_{B_{R}}t^{\gamma(1-\rho')}|x|^{\sigma(1-\rho')}\theta^{1-\rho'}(t)
|\theta'(t)|^{\rho'}dxdt\bigg)^{\frac{1}{\rho'}} \\
&\quad\times\bigg(\int^{T}_{\frac{T}{2}}\int_{B_{R}}|x|^\sigma t^\gamma u^{\rho}(x,t)\xi_R(x)\theta(t) dxdt\bigg)^{\frac{1}{\rho}}\\
&\leqslant CR^{\frac{N}{\rho'}-\frac{\sigma}{\rho}}\bigg(\int^{T}_{0}t^{\gamma(1-\rho')}\theta^{1-\rho'}(t)|\theta'(t)|^{\rho'} dt\bigg)^{\frac{1}{\rho'}}\bigg(\int^{T}_{\frac{T}{2}}\int_{B_{R}}|x|^\sigma t^\gamma u^{\rho}(x,t)\xi_R(x)\theta(t) dxdt\bigg)^{\frac{1}{\rho}},
\end{align*}
where $\rho'=\frac{\rho}{\rho-1}$. Since $-1<\gamma\leqslant0$, Lemma \ref{lemma.10} gives that
\begin{align}\label{(14)}
\begin{split}
P_{1}
&\leqslant CR^{\frac{N}{\rho'}-\frac{\sigma}{\rho}}T^{-\frac{\gamma+1}{\rho}}\bigg(\int^{T}_{\frac{T}{2}}\int_{B_{R}}|x|^\sigma t^\gamma u^{\rho}(x,t)\xi_R(x)\theta(t) dxdt\bigg)^{\frac{1}{\rho}}.
\end{split}
\end{align}

For the term $P_{2}$, multiply by $\xi_R^{\frac{1}{\rho}}(x)|x|^{\frac{\sigma}{\rho}}t^{\frac{\gamma}{\rho}}
t^{-\frac{\gamma}{\rho}}|x|^{-\frac{\sigma}{\rho}}\xi_R^{-\frac{1}{\rho}}(x)$ inside the integral, then by H\"{o}lder's inequality again,
\begin{align*}
\begin{split}
P_2&\leqslant
\int^{T}_{0}\int_{B_{R}\backslash B_{\frac{R}{2}}}u(x,t)\xi_R^{\frac{1}{\rho}}(x)|x|^{\frac{\sigma}{\rho}}t^{\frac{\gamma}{\rho}}
t^{-\frac{\gamma}{\rho}}|x|^{-\frac{\sigma}{\rho}}\xi_R^{-\frac{1}{\rho}}(x)|\Delta\xi_R(x)|\theta^{\frac{1}{\rho}}(t)dxdt\\
&\leqslant\bigg(\int^{T}_{0}\int_{B_{R}\backslash B_{\frac{R}{2}}}t^{\gamma(1-\rho')}|x|^{\sigma(1-\rho')}\xi^{1-\rho'}_R(x)|\Delta\xi_R(x)|^{\rho'}dxdt\bigg)^{\frac{1}{\rho'}}\\
&\quad\times\bigg(\int^{T}_{0}\int_{B_{R}\backslash B_{\frac{R}{2}}}|x|^\sigma t^\gamma u^{\rho}(x,t)\xi_R(x)\theta(t)dxdt\bigg)^{\frac{1}{\rho}}.
\end{split}
\end{align*}
It follows from \eqref{5} that
\begin{align*}
&\quad\bigg(\int^{T}_{0}\int_{B_{R}\backslash B_{\frac{R}{2}}}t^{\gamma(1-\rho')}|x|^{\sigma(1-\rho')}\xi^{1-\rho'}_R(x)|\Delta\xi_R(x)|^{\rho'}dxdt\bigg)^{\frac{1}{\rho'}}\\
&\leqslant
CR^{-2}\bigg(\int^{T}_{0}\int_{B_{R}\backslash B_{\frac{R}{2}}}t^{\gamma(1-\rho')}|x|^{\sigma(1-\rho')}dxdt\bigg)^{\frac{1}{\rho'}}\\
&\leqslant CR^{\frac{N}{\rho'}-\frac{\sigma}{\rho}-2}\bigg(\int^{T}_{0}t^{\gamma(1-\rho')}dt\bigg)^{\frac{1}{\rho'}}\\
&\leqslant CR^{\frac{N}{\rho'}-\frac{\sigma}{\rho}-2}
T^{\frac{1}{\rho'}-\frac{\gamma}{\rho}}.
\end{align*}
Hence, we have
\begin{align}\label{(15)}
 P_{2} &\leqslant CR^{\frac{N}{\rho'}-\frac{\sigma}{\rho}-2}
T^{\frac{1}{\rho'}-\frac{\gamma}{\rho}}\bigg(\int^{T}_{0}\int_{B_{R}\backslash B_{\frac{R}{2}}}|x|^\sigma t^\gamma u^{\rho}(x,t)\xi_R(x)\theta(t)dxdt\bigg)^{\frac{1}{\rho}}.
\end{align}
Similarly, the property \eqref{5} and Lemma \ref{lemma.9} yield
\begin{align}\label{(13)}
\begin{split}
P_{3} &\leqslant k\bigg(\int^{T}_{0}\int_{B_{R}\backslash B_{\frac{R}{2}}}t^{\gamma(1-\rho')}|x|^{\sigma(1-\rho')}\theta^{1-\rho'}(t)\xi_R^{1-\rho'}(x)|\Delta\xi_R(x)|^{\rho'}(D^\alpha_{T^-}\theta)^{\rho'}(t) dxdt\bigg)^{\frac{1}{\rho'}}\\
&\quad\times\bigg(\int^{T}_{0}\int_{B_{R}\backslash B_{\frac{R}{2}}}|x|^\sigma t^\gamma u^{\rho}(x,t)\xi_R(x)\theta(t)dxdt\bigg)^{\frac{1}{\rho}}\\
&\leqslant  CkR^{\frac{N}{\rho'}-\frac{\sigma}{\rho}-2}\bigg(\int^T_0t^{\gamma(1-\rho')}\theta^{1-\rho'}(t)
(D^\alpha_{T^-}\theta)^{\rho'}(t)dt\bigg)^{\frac{1}{\rho'}}\\
&\quad\times\bigg(\int^{T}_{0}\int_{B_{R}\backslash B_{\frac{R}{2}}}|x|^\sigma t^\gamma u^{\rho}(x,t)\xi_R(x)\theta(t)dxdt\bigg)^{\frac{1}{\rho}}\\
&\leqslant
 CkR^{\frac{N}{\rho'}-\frac{\sigma}{\rho}-2} T^{\frac{1}{\rho'}-\frac{\gamma}{\rho}-\alpha}\bigg(\int^{T}_{0}\int_{B_{R}\backslash B_{\frac{R}{2}}}|x|^\sigma t^\gamma u^{\rho}(x,t)\xi_R(x)\theta(t)dxdt\bigg)^{\frac{1}{\rho}}.
\end{split}
\end{align}
It follows from \eqref{(12)}-\eqref{(13)} that
\begin{align}\label{(11)}
\begin{split}
I_{\rho}&\leqslant CR^{\frac{N}{\rho'}-\frac{\sigma}{\rho}} T^{-\frac{\gamma+1}{\rho}}\bigg(\int^{T}_{\frac{T}{2}}\int_{B_{R}}|x|^\sigma t^\gamma u^{\rho}(x,t)\xi_R(x)\theta(t) dxdt\bigg)^{\frac{1}{\rho}}\\
&\quad+ CR^{\frac{N}{\rho'}-\frac{\sigma}{\rho}-2} T^{\frac{1}{\rho'}-\frac{\gamma}{\rho}}\bigg(\int^{T}_{0}\int_{B_{R}\backslash B_{\frac{R}{2}}}|x|^\sigma t^\gamma u^{\rho}(x,t)\xi_R(x)\theta(t)dxdt\bigg)^{\frac{1}{\rho}}\\
&\quad+CkR^{\frac{N}{\rho'}-\frac{\sigma}{\rho}-2} T^{\frac{1}{\rho'}-\frac{\gamma}{\rho}-\alpha}\bigg(\int^{T}_{0}\int_{B_{R}\backslash B_{\frac{R}{2}}}|x|^\sigma t^\gamma u^{\rho}(x,t)\xi_R(x)\theta(t)dxdt\bigg)^{\frac{1}{\rho}}\\
&\leqslant C\big(R^{\frac{N}{\rho'}-\frac{\sigma}{\rho}} T^{-\frac{\gamma+1}{\rho}}
+R^{\frac{N}{\rho'}-\frac{\sigma}{\rho}-2} T^{\frac{1}{\rho'}-\frac{\gamma}{\rho}}
+kR^{\frac{N}{\rho'}-\frac{\sigma}{\rho}-2} T^{\frac{1}{\rho'}-\frac{\gamma}{\rho}-\alpha}\big)I_{\rho}^{\frac{1}{\rho}}.
\end{split}
\end{align}
Take $T=R^{\beta}$, where $\beta>0$. Then we have
\begin{align}\label{(24)}
I_{\rho}\leqslant C\big(R^{\frac{N}{\rho'}-\frac{\sigma+\beta\gamma+\beta}{\rho}}
+R^{\frac{N+\beta}{\rho'}-\frac{\sigma+\beta\gamma}{\rho}-2}+kR^{\frac{N+\beta}{\rho'}-\frac{\sigma+\beta\gamma}{\rho}-2-\alpha\beta}\big)I_{\rho}^{\frac{1}{\rho}}.
\end{align}
We now choose the parameter $\beta>0$ in order the first two exponents of $R$ in \eqref{(24)} are the same.
For $\beta=2$ and $R>1$, we get
\begin{align*}
I_{\rho}\leqslant C(2+k)R^{N-\frac{N+\sigma+2\gamma+2}{\rho}}I_{\rho}^{\frac{1}{\rho}},
\end{align*}
and
\begin{align}\label{(16)}
I_{\rho}\leqslant% C(2+k)R^{N+\sigma(1-{\rho'})-2{\rho'}+\beta\gamma(1-{\rho'})+\beta}=
C(2+k)^{\frac{\rho}{\rho-1}}R^{N-\frac{\sigma+2\gamma+2}{\rho-1}}.
\end{align}
Notice that $\rho<\rho_c=1+\frac{\sigma+2\gamma+2}{N}$ implies $N-\frac{\sigma+2\gamma+2}{\rho-1}<0$. Letting $R\rightarrow\infty$ in \eqref{(16)} and using the
monotone convergence theorem, then we get
\begin{align*}
\int^{\infty}_{0}\int_{\mathbb{R}^{N}}|x|^\sigma t^\gamma u^{\rho}(x,t)dxdt=0,
\end{align*}
which is a contradiction.

If $\rho=\rho_c=1+\frac{\sigma+2\gamma+2}{N}$, that is, $N-\frac{\sigma+2\gamma+2}{\rho-1}=0$, we have $$\lim_{R\rightarrow\infty}I_{\rho}=\int^{\infty}_{0}\int_{\mathbb{R}^{N}}|x|^\sigma t^\gamma u^{\rho}(x,t)dxdt\leqslant C(2+k).$$
It follows from \eqref{(11)} that for any $\epsilon>0$, there exists $R_{1}>0$ such that
\begin{align*}
I_{\rho}\leqslant C(2+k)\epsilon^{\frac{1}{\rho}}, \ \text{for} \ R>R_{1},
\end{align*}
where the constant $C$ is independent of $\epsilon$. The arbitrariness of $\epsilon$ yields a contradiction. The proof is completed.
\end{proof}

%\begin{Rem}\rm
%For the blow-up solutions under the large initial date, we note that the size of initial date is dependent on $k$, the coefficient of the high order term and the size of initial date is obviously increasing on $k$.
%\end{Rem}
\subsection{Blow-up result for the system}
In this subsection, we consider the finite time blow-up of solutions for the coupled Rayleigh-Stokes system  \eqref{(6)}. By using similar arguments as in the proof of Theorem \ref{th.1}, we assume for contradiction that there is a nonnegative nontrivial global weak solution $(u,v)$ to the system \eqref{(6)}.
\begin{proof}[\bf{Proof of Theorem \ref{th23}}]
Let $1<{\rho_1}{\rho_2}\leqslant ({\rho_1}{\rho_2})_c$. Without loss of generality, we assume ${\rho_1}\geqslant{\rho_2}$. Assume for contradiction that $(u,v)$ is a nonnegative nontrivial global weak solution to the system \eqref{(6)} and donate
\begin{align*}
&I_{{\rho_1}}:=\int^{T}_{0}\int_{B_{R}}|x|^\sigma t^\gamma  v^{{\rho_1}}(x,t)\xi_R(x)\theta(t)dxdt, \\& J_{{\rho_2}}:=\int^{T}_{0}\int_{B_{R}}|x|^\sigma t^\gamma u^{{\rho_2}}(x,t)\xi_R(x)\theta(t)dxdt,
\end{align*}
where $\xi_R$, $\theta$, $T$ and $R$ are given in the proof of Theorem \ref{th.1}.
It follows from \eqref{(11)} with $T=R^{2}$ and $\rho={\rho_2}$ that
\begin{align}\label{(17)}
\begin{split}
I_{{\rho_1}}\leqslant& CR^{N-\frac{N+2+\sigma+2\gamma}{{\rho_2}}}\bigg(\int^{R^{2}}_{\frac{R^{2}}{2}}
 \int_{B_{R}}|x|^\sigma t^\gamma u^{{\rho_2}}(x,t)\xi_R(x)\theta(t)dxdt\bigg)^{\frac{1}{{\rho_2}}}\\&+CR^{N-\frac{N+2+\sigma+2\gamma}{{\rho_2}}}\bigg(\int^{R^{2}}_{0}\int_{B_{R}\backslash B_{\frac{R}{2}}}|x|^\sigma t^\gamma u^{{\rho_2}}(x,t)\xi_R(x)\theta(t)dxdt\bigg)^{\frac{1}{{\rho_2}}}\\
&+CkR^{N-\frac{N+2+\sigma+2\gamma}{{\rho_2}}-2\alpha}\bigg(\int^{R^{2}}_{0}\int_{B_{R}\backslash B_{\frac{R}{2}}}|x|^\sigma t^\gamma u^{{\rho_2}}(x,t)\xi_R(x)\theta(t)dxdt\bigg)^{\frac{1}{{\rho_2}}}\\
\leqslant
 &C(2+k)R^{N-\frac{N+2+\sigma+2\gamma}{{\rho_2}}}J_{{\rho_2}}^{\frac{1}{{\rho_2}}}.
\end{split}
\end{align}
Similarly, we have
\begin{align}\label{(18)}
\begin{split}
J_{{\rho_2}}\leqslant& CR^{N-\frac{N+2+\sigma+2\gamma}{{\rho_1}}}\bigg(\int^{R^{2}}_{\frac{R^{2}}{2}}\int_{B_{R}}|x|^\sigma t^\gamma v^{{\rho_1}}(x,t)\xi_R(x)\theta(t)dxdt\bigg)^{\frac{1}{{\rho_1}}}\\
&+CR^{N-\frac{N+2+\sigma+2\gamma}{{\rho_1}}}\bigg(\int^{R^{2}}_{0}\int_{B_{R}\backslash B_{\frac{R}{2}}}|x|^\sigma t^\gamma v^{{\rho_1}}(x,t)\xi_R(x)\theta(t)dxdt\bigg)^{\frac{1}{{\rho_1}}}\\
&+CkR^{N-\frac{N+2+\sigma+2\gamma}{{\rho_1}}-2\alpha}\bigg(\int^{R^{2}}_{0}\int_{B_{R}\backslash B_{\frac{R}{2}}}|x|^\sigma t^\gamma v^{{\rho_1}}(x,t)\xi_R(x)\theta(t)dxdt\bigg)^{\frac{1}{{\rho_1}}}\\
\leqslant &C(2+k)R^{N-\frac{N+2+\sigma+2\gamma}{{\rho_1}}}I_{{\rho_1}}^{\frac{1}{{\rho_1}}}.
\end{split}
\end{align}
It follows from \eqref{(17)} and \eqref{(18)} that
\begin{align*}
I_{{\rho_1}}&\leqslant C(2+k)^{1+\frac{1}{{\rho_2}}}R^{N-\frac{N+2+\sigma+2\gamma}{{\rho_2}}}
\big(R^{N-\frac{N+2+\sigma+2\gamma}{{\rho_1}}}I_{{\rho_1}}^{\frac{1}{{\rho_1}}}\big)^{\frac{1}{{\rho_2}}}\\
&=C(2+k)^{1+\frac{1}{{\rho_2}}}
R^{N-\frac{2+\sigma+2\gamma}{{\rho_2}}-\frac{N+2+\sigma+2\gamma}{{\rho_1}{\rho_2}}}I_{{\rho_1}}^{\frac{1}{{\rho_1}{\rho_2}}},
\end{align*}
and
\begin{align}\label{97}
I_{{\rho_1}}\leqslant
C(2+k)^{\frac{{\rho_1}({\rho_2}+1)}{{\rho_1}{\rho_2}-1}}R^{N-\frac{({\rho_1}+1)(2+\sigma+2\gamma)}{{\rho_1}{\rho_2}-1}}.
\end{align}

Notice that ${\rho_1}{\rho_2}<({\rho_1}{\rho_2})_c=1+\frac{(2+\sigma+2\gamma)({\rho_1}+1)}{N}$ implies $N-\frac{({\rho_1}+1)(2+\sigma+2\gamma)}{{\rho_1}{\rho_2}-1}<0$. Letting $R\rightarrow\infty$ in \eqref{97}, we get
\begin{align*}
\int^{\infty}_{0}\int_{\mathbb{R}^{N}}|x|^\sigma t^\gamma v^{{\rho_1}}(x,t)dxdt=0,
\end{align*}
which is a contradiction.

If ${\rho_1}{\rho_2}=({\rho_1}{\rho_2})_c$, that is, $N-\frac{({\rho_1}+1)(2+\sigma+2\gamma)}{{\rho_1}{\rho_{2}}-1}=0$, we have $$\lim_{R\rightarrow\infty}I_{{\rho_1}} =\int^{\infty}_{0}\int_{\mathbb{R}^{N}}|x|^\sigma t^\gamma v^{{\rho_1}}(x,t)dxdt\leqslant
 C(2+k)^{\frac{{\rho_1}({\rho_2}+1)}{{\rho_1}{\rho_2}-1}}.$$ It follows from \eqref{(18)} that for any $\epsilon>0$, there exists $R_{1}>0$ such that
\begin{align}\label{(19)}
J_{{\rho_2}}\leqslant C(2+k)R^{N-\frac{N+2+\sigma+2\gamma}{{\rho_1}}}\epsilon^{\frac{1}{{\rho_1}}}, \ \text{for} \ R>R_{1}.
\end{align}
Combining \eqref{(17)} and \eqref{(19)}, we get with $N-\frac{({\rho_1}+1)(2+\sigma+2\gamma)}{{\rho_1}{\rho_{2}}-1}=0$ that
$\lim_{R\rightarrow\infty}I_{{\rho_1}}\leqslant C(2+k)^{1+\frac{1}{\rho_2}}\epsilon^{\frac{1}{{\rho_1}{\rho_2}}}$, where the constant $C$ is independent of $\epsilon$. The arbitrariness of $\epsilon$ yields a contradiction. The proof is completed.
\end{proof}
\section{Conclusions}
The Rayleigh-Stokes problem for some non-Newtonian fluids has received much attention because of its practical importance. For description of such a viscoelastic fluid, fractional calculus approach has been used in the the constitutive relationship model, and the modified viscoelastic model is more flexible than the conventional model in describing the properties of viscoelastic fluid.

The aim of the authors is to investigate the Fujita phenomena in the nonlinear Rayleigh-Stokes equation and the corresponding system. It is proved that there is a critical exponent that separates systematic blow-up of the solutions from possible global existence. In this paper, we use the integral representation and the contraction-mapping principle to prove the global existence results. Here, the integral representation is more complex than the one for the case $k=0$. As to the blow-up results, we use the method to show the energy blowing-up. This relies heavily on the construction and a series of precise estimates for the time-component of the test function. Our conclusions are consistent with the existing studies and our methods can apply to both integer and fractional order differential equations. 
The present study can be considered as a guide to deal with the more general system given in Remark \ref{general}. Besides, it is an open problem on this topic to consider the H\'{e}non type nonlinearity, namely $\sigma,\gamma>0$.
\\*

\textbf{Date Availability Statement}
No date, models, or code were generated or used during this study.
\\*

\textbf{Conflict of Interest}
The authors declare that they have no conflict of interest.

%\bibliography{ref}
%\nocite{*}

\addcontentsline{toc}{section}{References} %\newpage
%\bibliography{master}

\begin{thebibliography}{00}
\bibitem{ML2016} Allen M, Caffarelli L, Vasseur A. A parabolic problem with a fractional time derivative. Arch Ration Mech Anal 2016;221(2):603-630.
\bibitem{A1995} Amann H. Linear and quasilinear parabolic problems. Basel: Birkh\"{a}user; 1995.
\bibitem{B} Bandle C, Levine H. On the existence and nonexistence of global solutions of reaction-diffusion equations in sectorial domains. Trans Amer Math Soc 1989;316(2):595-622.
\bibitem{20} Bazhlekova E, Jin B, Lazarov R, Zhou Z. An analysis of the Rayleigh-Stokes problem for a generalized second-grade fluid. Numer Math 2015;131(1):1-31.
\bibitem{CY2009} Cao Y, Yin J, Wang C. Cauchy problems of semilinear pseudo-parabolic equations. J Differ Equ 2009;246:4568-4590.
\bibitem{DK2019} Dong H, Kim D. $L_{p}$-estimates for time fractional parabolic equations with coefficients measurable in time. Adv Math 2019;345:289-345.
\bibitem{EH1991} Escobedo M, Herrero MA. Boundedness and blow-up for a semilinear reaction-diffusion system. J Differ Equ 1991;89:176-202.
%\bibitem{7}  Fetecau C.: The Rayleigh-Stokes problem for an edge in an Oldroyd-B fluid. C. R. Acad Sci. Paris., 335(11), 979-984 (2009)	
%\bibitem{FV2006} L.C.F. Ferreira, E.J. Villamizar-Roa, Self-similar solutions, uniqueness and long-time asymptotic behavior for semilinear heat equations. Differ. Integral Equ. 19(12), 1349-1370 (2006)
\bibitem{10}  Fetec\v{a}u C, Fetec\v{a}u C. The Rayleigh-Stokes problem for heated second grade fluids. Internat J Non-Linear Mech 2002;37(6):1011-1015.
%\bibitem{8}  Fetecau C., Zierep J.: The Rayleigh-Stokes-problem for a Maxwell fluid. Z. Angew Math Phys. 54(6), 1086-1093 (2003)	

%\bibitem{CM2009} C. Fetecau, M. Jamil, C. Fetecau, D. Vieru: The Rayleigh-Stokes problem for an edge in a generalized Oldroyd-B fluid. Z. Angew. Math. Phys. 60(5), 921-933 (2009)
\bibitem{F1966} Fujita H. On the blowing up of solutions of the Cauchy problem for $\partial u/\partial t=\Delta u+ u^{1+\alpha}$. J Fac Sci Univ Tokyo Sect I 1966;13:109-124.
\bibitem{19} Guedda M, Kirane M. A note on nonexistence of global solutions to a nonlinear integral equation. Bull Belg Math Soc 1999;6:491-497.
\bibitem{H1973} Hayakawa K. On nonexistence of global solutions of some semilinear parabolic equations. Proc Japan Acad 1973;49(7):503-525.
\bibitem{HZ2022} He JW, Zhou Y, Peng L, Ahmad B. On well-posedness of semilinear Rayleigh-Stokes problem with fractional derivative on $\mathbb{R}^{N}$. Adv Nonlinear Anal 2022;11(1):580-597.
\bibitem{JR2024} Jiang Y, Ren J, Wei Y. Existence and regularity of solutions for semilinear fractional Rayleigh-Stokes equations. Z Angew Math Phys 2024;75:100.
%\bibitem{JK2016}M. Jleli, M. Kirane, B. Samet: Blow-up results for fractional evolution problems with nonlocal diffusion. Mediterr. J. Math. 13(5), 3513-3523 (2016)
%\bibitem{KF2017} M.D. Kassim, K.M. Furati, N.-E. Tatar: Nonexistence of global solutions for a fractional differential problem. J. Comput. Appl. Math. 314, 61-68 (2017)
\bibitem{KS2016} Kemppainen J, Siljander J, Vergara V, Zacher R. Decay estimates for time-fractional and other non-local in time subdiffusion equations in $\mathbb{R}^{d}$. Math Ann 2016;366:941-979.
%\bibitem{AH2006}A.A. Kilbas, H.M. Srivastava, J.J. Trujillo: Theory and applications of fractional differential equation. Elsevier, Amsterdam (2006)
\bibitem{KS1977} Kobayashi K, Siaro T, Tanaka H. On the blowing up problem for semilinear heat equations. J Math Soc Japan 1977;29(3):407-424.
\bibitem{KR2024} Kubica A, Ryszewska K, Zacher, R. H\"{o}lder continuity of weak solutions to evolution equations with distributed order fractional time derivative. Math Ann 2024;1-80.
\bibitem{4} Nadeem S, Asghar S, Hayat T, Hussian M. The Rayleigh Stokes problem for rectangular pipe in Maxwell and second grade fluid. Meccanica 2008;43(5):495-504.
\bibitem{N} Nguyen HT, Nguyen HL, Tuan AN. Some well-posed results on the time-fractional Rayleigh-Stokes problem with polynomial and gradient nonlinearities. Math Methods Appl Sci 2022;45(1):500-514.
\bibitem{PZ2024} Peng L, Zhou Y. The well-posedness results of solutions in Besov-Morrey spaces for fractional Rayleigh-Stokes equations. Qual Theory Dyn Syst 2024;23:43.
\bibitem{J93} Pr\"{u}ss J. Evolutionary integral equations and applications. Basel: Birkh\"{a}user; 1993.
\bibitem{Qi} Qi Y. The critical exponents of parabolic equations and blow-up in $\mathbb{R}^n$. Proc Roy Soc Edinburgh Sect A 1998;128(1):123-136.
\bibitem{25} Samko SG, Kilbas AA, Marichev OI. Fractional integrals and derivatives: theory and applications. Yverdon: Gordon and Breach Science Publishers; 1993.
\bibitem{24} Shen F, Tan W, Zhao Y, Masuoka T. The Rayleigh-Stokes problem for a heated generalized second grade fluid with fractional derivative model. Nonlinear
Anal Real World Appl 2006;7(5):1072-1080.
\bibitem{ST2017} Slimene BB, Tayachi S, Weissler FB. Well-posedness, global existence and large time behavior for Hardy-H\'{e}non parabolic equations. Nonlinear Anal 2017;152:116-148.
\bibitem{9} Wang JN, Alsaedi A, Ahmad B, Zhou Y. Well-posedness and blow-up results for a class of nonlinear fractional Rayleigh-Stokes problem. Adv Nonlinear Anal 2022;11:1579-1597.
\bibitem{Wang} Wang JN, Zhou Y, Alsaedi A, Ahmad B. Well-posedness and regularity of fractional Rayleigh-Stokes problem. Z Angew Math Phys 2022;73:161.

\bibitem{W1981} Weissler FB. Existence and non-existence of global solutions for a semilinear heat equation. Israel J Math. 1981;38(6):29-40.
\bibitem{YC2014} Yang J, Cao Y, Zheng S. Fujita phenomena in nonlinear pseudo-parabolic system. Sci China Math 2014;57(3):555-568.
%\bibitem{G1986}Giga, Yoshikazu: Solutions for semilinear parabolic equations in $L^{p}$ and regularity of weak solutions of the Navier-Stokes system. J. Differ. Equ. 62(2), 186-212 (1986)
\bibitem{Y2014} Zhou Y. Basic theory of fractional differential equations. Singapore: World Scientific; 2014.
\bibitem{27} Zhou Y, Wang JN. The nonlinear Rayleigh-Stokes problems with Riemann-Liouville fractional derivative. Math Methods Appl Sci 2021;44:2431-2438.

%\bibitem{EB2015} Bazhlekova E., Jin B., Lazarov R., Zhou Z.: An analysis of the Rayleigh-Stokes problem for a generalized second-grade fiuid. Numer. Math. 131, 1-31 (2015)
%\bibitem{HH2019}H.L. Nguyen, H.T. Nguyen, Y. Zhou: Regularity of the solution for a final value problem for the Rayleigh-Stokes equation. Math. Methods Appl. Sci. (2019)
%\bibitem{WZ2007}Wang C.P., Zheng S.N., Wang Z.J.: Critical Fujita exponents for a class of quasilinear equations with homogeneous Neumann boundary data. Nonlinearity 20, 1343-1359 (2007)

%\bibitem{YJ2019}Zhou Y., Wang J.N.: The nonlinear Rayleigh-Stokes problem with Riemann-Liouville fractional derivative. Math. Methods Appl. Sci. 44, 2431-2438 (2021)
\end{thebibliography}
%\bibliographystyle{wileybib}
%\nocite{*}
\end{document}